%

\documentclass{article}

\def\regularformat{%
 \addtolength{\oddsidemargin}{-.875in}%
 \addtolength{\evensidemargin}{-.875in}%
 \addtolength{\textwidth}{1.75in}%
 \addtolength{\topmargin}{-.875in}%
 \addtolength{\textheight}{1.75in}%
}
\regularformat
\usepackage{amsthm}
\usepackage{amsmath}
\usepackage{amssymb}
\usepackage{graphicx}
\usepackage{float}
\usepackage[font=small,labelsep=none]{caption}
\usepackage{xcolor}
\usepackage{url}
\newtheorem{theorem}{Theorem}[section]
\newtheorem{lemma}[theorem]{Lemma}
\newtheorem{proposition}[theorem]{Proposition}
\newtheorem{corollary}[theorem]{Corollary}

\newtheorem{claim}{Claim}

%
 \long\def\ignore#1{} 
 
%
%
 \let\reallabel\label
 \def\labelshow#1{\reallabel{#1}[#1]}
 \def\showlabels{\let\label\labelshow}

%
%
 \let\whenfig\noignore

\def\sem(#1;#2){\Sigma(#1\,|\,#2)}
\def\apm(#1;#2){\Theta(#1\,|\,#2)}
\let\bv\beta
\let\pf\pi
\def\deg{\textup{deg}}
\def\hf{\frac12}
\let\att=z
\def\set#1{\{#1\}}
\def\cart{\mskip1.5mu\square\mskip1mu} 

\linespread{1.2}

\begin{document}

\title{\bf Hamiltonicity of planar graphs with a forbidden minor}

\author{%
  M. N. Ellingham%
	\thanks{Supported by National Security Agency grant
H98230-13-1-0233 and Simons Foundation awards 245715 and 429625.  The
United States Government is authorized to reproduce and distribute
reprints notwithstanding any copyright notation herein.}%
	\\
  Department of Mathematics, 1326 Stevenson Center\\
  Vanderbilt University, Nashville, Tennessee 37212, U.S.A.\\
  \texttt{mark.ellingham@vanderbilt.edu}%
 \and
  Emily A. Marshall%
	\thanks{Supported by National Security Agency grant
H98230-13-1-0233 and Simons Foundation award 245715.}%
	\\
  Department of Mathematics, 303 Lockett Hall\\
  Louisiana State University, Baton Rouge, Louisiana 70803, U.S.A.\\
  \texttt{emarshall@lsu.edu}
 \and
  \hbox to 0.8\hsize{\hss
    Kenta Ozeki%
	\footnote{Supported in part by JSPS KAKENHI Grant Number
25871053 %
	and by a Grant for Basic Science Research Projects from The %
	Sumitomo Foundation.}%
	\hss}\\
National Institute of Informatics,\\
2-1-2 Hitotsubashi, Chiyoda-ku, Tokyo 101-8430, Japan \\
and \\
  JST, ERATO, Kawarabayashi Large Graph Project, Japan\\
  \texttt{ozeki@nii.ac.jp}%
 \and
  Shoichi Tsuchiya%
	\\
  School of Network and Information, Senshu University,\\
	2-1-1 Higashimita, Tama-ku, Kawasaki-shi, Kanagawa, 214-8580,
Japan\\ 
	\texttt{s.tsuchiya@isc.senshu-u.ac.jp}%
}

\date{October 20, 2016}

\maketitle

\begin{abstract}
 Tutte showed that $4$-connected planar graphs are Hamiltonian, but it
is well known that $3$-connected planar graphs need not be Hamiltonian.
 We show that $K_{2,5}$-minor-free $3$-connected planar graphs are
Hamiltonian.  This does not extend to $K_{2,5}$-minor-free $3$-connected
graphs in general, as shown by the Petersen graph, and does not extend
to $K_{2,6}$-minor-free $3$-connected planar graphs, as we show by an
infinite family of examples. \end{abstract}

\section{Introduction}

 All graphs in this paper are finite and simple (no loops or multiple
edges).

 Whitney \cite{whitney} showed that every $4$-connected plane
triangulation is Hamiltonian, and Tutte \cite{tutte} extended this to
every $4$-connected planar graph.  Tutte's result has been strengthened
in various ways; see for example
\cite{chiba,HS09,ozeki,Sa97,TY94,thomassen}.

 If we relax the connectivity condition, it is not true that all $2$- or
$3$-connected planar graphs are Hamiltonian.  The smallest $2$-connected
planar graph that is not Hamiltonian is $K_{2,3}$.  The smallest
$3$-connected planar graph that is not Hamiltonian is the so-called
Herschel graph, with $11$ vertices and $18$ edges.  It was known to
Coxeter in 1948 \cite[p.~8]{Co48}, but a proof that it is smallest
relies on later work by Barnette and Jucovi\v{c} \cite{BJ70} and
Dillencourt \cite{Di96}.  If we restrict to triangulations, the smallest
$2$- or $3$-connected planar triangulation that is not Hamiltonian is a
triangulation obtained by adding $9$ edges to the Herschel graph.  It
was known to C. N. Reynolds (in dual form) in 1931, as reported by
Whitney \cite[Fig.~9]{whitney}.  Again, the proof that this is smallest
relies on \cite{BJ70} and \cite{Di96}.  This triangulation was also
presented much later by Goldner and Harary \cite{GH75}, so it is
sometimes called the Goldner-Harary graph.

 It is therefore reasonable to ask what conditions can be imposed on a
$2$- or $3$-connected planar graph to make it Hamiltonian.  The main
direction in which positive results have been obtained is to restrict
the types of $2$- or $3$-cuts in the graph.  Dillencourt \cite[Theorem
4.1]{dillencourt} showed that a near-triangulation (a $2$-connected
plane graph with all faces bounded by triangles except perhaps the
outer face) with no separating triangles and certain restrictions on
chords of the outer cycle is Hamiltonian.  Sanders \cite[Theorem
2]{sanders} extended this to a larger class of graphs.  Jackson and Yu
\cite[Theorem 4.2]{jackson} showed that a plane triangulation is
Hamiltonian if each `piece' defined by decomposing along separating
triangles connects to at most three other pieces.  Our results explore a
different kind of condition, based on excluding a complete bipartite
minor.


 Excluded complete bipartite minors have been used previously in more
general settings to prove results involving concepts related to the
existence of a Hamilton cycle, such as toughness, circumference, or the
existence of spanning trees of bounded degree; see for example
 \cite{CEKMO11,
 	chen,
	OO12}. 
 We are interested in graphs that have no $K_{2,t}$ minor, for some
$t$.  Some general results are known for such graphs, including
a rough structure theorem \cite{ding2}, upper bounds on the number of edges
\cite{chud, myers}, and a lower bound on circumference \cite{chen}.

 For $2$-connected graphs, a $K_{2,3}$-minor-free graph is either
outerplanar or $K_4$, and is therefore both planar and Hamiltonian.
 However, the authors \cite{K24char} recently characterized all
$K_{2,4}$-minor-free graphs, and there are many $K_{2,4}$-minor-free
$2$-connected planar graphs that are not Hamiltonian.
 For $3$-connected graphs, the $K_{2,4}$-minor-free ones belong to a
small number of small graphs, some of which are nonplanar, or a sparse
infinite family of planar graphs; all are Hamiltonian.
 There are $K_{2,5}$-minor-free $3$-connected nonplanar graphs that are
not Hamiltonian, such as the Petersen graph, but in this paper we show
that all $K_{2,5}$-minor-free $3$-connected planar graphs are
Hamiltonian.
 We also show that this cannot be extended to $K_{2,6}$-minor-free
graphs, by constructing an infinite family of $K_{2,6}$-minor-free
$3$-connected planar graphs that are not Hamiltonian.

 The number $g(n)$ of nonisomorphic
$K_{2,5}$-minor-free $3$-connected planar graphs on $n$ vertices
 grows at least exponentially (for $n \ge 10$ with $n$ even this is not hard to show
using the family of graphs obtained by adding an optional diagonal chord
across each quadrilateral face of a prism $C_{n/2} \cart K_2$).  Some
computed values of $g(n)$ are as follows.

 \begin{center}
 \begin{tabular}{c|cccccc}
 $n$ & $7$ & $8$ & $9$ & $10$ & $11$ & $12$ \\ \hline
 $g(n)$ & $31$ & $194$ & $918$ & $3\,278$ &
	$8\,346$ & $18\,154$ \\
 \end{tabular}
 \end{center}

 \noindent
 The exponential growth of $g(n)$ contrasts with the growth of the
number of nonisomorphic $3$-connected $K_{2,4}$-minor-free graphs
(planar or nonplanar), which is only linear \cite{K24char}.
 Thus, our results apply to a sizable class of graphs.

 In Section 2 we provide necessary definitions and preliminary
results.
 The main result, Theorem~\ref{thm:main}, that $K_{2,5}$-minor-free
$3$-connected planar graphs are Hamiltonian, is proved in Section 3. In
Section 4 we discuss $K_{2,6}$-minor-free $3$-connected planar graphs.


 %

\section{Definitions and Preliminary Results}

 An edge, vertex, or set of $k$ vertices whose deletion increases the
number of components of a
graph is a \textit{cutedge}, \textit{cutvertex}, or \textit{$k$-cut},
respectively.
 The subgraph of $G$ induced by $S \subseteq V(G)$ is denoted by $G[S]$.
 If $P$ is a path and $x, y \in V(P)$ then $P[x,y]$ represents the
subpath of $P$ between $x$ and $y$.

 \subsection{Minors and models}

 A graph $H$ is a \textit{minor} of a graph $G$ if $H$ is isomorphic to a
graph formed from $G$ by contracting and deleting edges of $G$ and
deleting vertices of $G$.
 A graph is \textit{$H$-minor-free} if it does not have $H$ as a
minor.
 Another way to think of a minor $H$ is in terms of a function
$\bv$ mapping each $u \in V(H)$ to $\bv(u) \subseteq V(G)$, the
\textit{branch set} of $u$, such that
 (a) $\bv(u) \cap \bv(u') = \emptyset$ if $u \ne u'$;
 (b) $G[\,\bv(u)\,]$ is connected for each $u$; and
 (c) if $uu' \in E(H)$ then there is at least one edge between $\bv(u)$
and $\bv(u')$ in $G$.
 We call $\bv$ a \textit{model}, or more specifically an
\textit{edge-based model}, of $H$ in $G$.
 More generally, we may replace condition (c) by the existence of a
function $\pf$ mapping each $e=uu' \in E(H)$ to a path $\pf(uu')$ in
$G$, such that
 ($\hbox{c}_1$) $\pf(uu')$ starts in $\bv(u)$ and ends in $\bv(u')$;
 ($\hbox{c}_2$) no internal vertex of $\pf(uu')$ belongs to any
$\bv(u'')$ (even if $u'' = u$ or $u'$); and
 ($\hbox{c}_3$) $\pf(e)$ and $\pf(e')$ are internally disjoint if $e \ne e'$.
 We call $(\bv,\pf)$ a \textit{path-based model} of $H$ in $G$.

 Now we discuss $K_{2,t}$ minors in particular.
 We assume that $V(K_{2,t}) = \{a_1, a_2, b_1, b_2, \ldots, b_t\}$ and
$E(K_{2,t}) = \{ a_i b_j \,|\, 1 \le i \le 2,\, 1 \le j \le t\}$.

 Edge-based models are convenient for proving nonexistence of a minor.
In fact, for $K_{2,t}$ minors we can use an even more restrictive model.
 Consider an edge-based model $\bv_0$ of $K_{2,t}$, where $b_1$ and its
incident edges correspond to a path $v_1v_2\ldots v_k$, $k \geq 3$, with
$v_1 \in \bv_0(a_1)$, $v_k \in \bv_0(a_2)$, and $v_i \in \bv_0(b_1)$ for
$2 \leq i \leq k-1$.  Define $\bv_1(b_1)=\{v_2\}$,
$\bv_1(a_2)=\bv_0(a_2) \cup \{v_3,\ldots ,v_{k-1}\}$, and $\bv_1(u) =
\bv_0(u)$ for all other $u$.  Then $\bv_1$ is also an edge-based model
of a $K_{2,t}$ minor, and $|\bv_1(b_1)|=1$.  Applying the same procedure
to $b_2, b_3, \ldots, b_t$ in turn, we obtain an edge-based model
$\bv=\bv_t$ with $|\bv(b_j)| = 1$ for $1 \le j \le t$.
 Such a $\bv$ is a \textit{standard model} of $K_{2,t}$, and we denote
it by either $\sem(R_1, R_2; s_1, s_2, \ldots, s_t)$ or $\sem(R_1, R_2;
S)$ where $R_i = \bv(a_i)$ for $1 \le i \le 2$, $\{s_j\} = \bv(b_j)$ for
$1 \le j \le t$, and $S = \{s_1, s_2, \ldots, s_t\}$.
 Thus, if $G$ has a $K_{2,t}$ minor then it has a standard model of
$K_{2,t}$.

 On the other hand, path-based models are useful for proving
existence of a minor.  Moreover, in many situations where we find a
$K_{2,t}$ minor it would be tedious to give an exact description of the
minor.  So, we say that
 $\apm(R_1, R_2; S_1, S_2, \ldots, S_t)$ is an \textit{approximate
(path-based) model} of $K_{2,t}$ if
there exists a path-based model $(\bv, \pf)$ such that
$R_i \subseteq \bv(a_i)$ for $1 \le i \le 2$ and $S_j \subseteq
\bv(b_j)$ for $1 \le j \le t$.
 For convenience, we also allow each $R_i$ to be a subgraph, not just a
set of vertices, with $V(R_i) \subseteq \bv(a_i)$, and similarly for
each $S_j$.
 Informally, we will specify enough of each branch set to make the
existence of the minor clear; using subgraphs rather than vertex sets
sometimes helps to clarify why a branch set induces a connected
subgraph.
 We use this notation even when we actually have an exact description of
a minor.
 For brevity, we just say that `$\apm(R_1, R_2; S_1, S_2, \ldots, S_t)$
is a $K_{2,t}$ minor'.
 In our figures the sets or subgraphs $R_i$ are enclosed by dotted
curves, and the sets or subgraphs $S_j$ (usually just single vertices)
are indicated by triangles.

 We also need one type of rooted minor.
 We say there is a \textit{$K_{2,t}$ minor rooted at $R_1$ and $R_2$} if
there is a path-based model $(\bv, \pf)$ (or, equivalently, edge-based
model $\bv$, or even standard model $\bv$) with
 $R_1 \subseteq \bv(a_1)$ and  $R_2 \subseteq \bv(a_2)$.
 Again we extend this to allow $R_1$ and $R_2$ to be subgraphs, not just
sets of vertices.

 \subsection{Path-outerplanar graphs}

 A graph is \textit{outerplanar} if it has a plane embedding in which
all vertices are on the outer face.
 We use a characterization that we proved elsewhere of graphs without
rooted $K_{2,2}$ minors in terms of special outerplanar graphs.
 (Along different lines, Demasi \cite[Lemma 2.2.2]{demasi} provided a
description of graphs with no $K_{2,2}$ minor rooting all four vertices,
in terms of disjoint paths.)
 Our characterization uses the following definitions.
 Given $x, y \in V(G)$, an \textit{$xy$-outerplane embedding} of a graph
$G$ is an embedding in a closed disk $D$ such that a Hamilton $xy$-path
$P$ of $G$ is contained in the boundary of $D$; $P$ is called the
\textit{outer path}. A graph is \textit{$xy$-outerplanar}, or
\textit{path-outerplanar}, if it has an $xy$-outerplane embedding.
 A graph $G$ is a \textit{block} if it is connected and has no
cutvertex; a block is either $2$-connected, $K_2$, or $K_1$.

 \begin{lemma}[\cite{K24char}]
 \label{lem:rooted22}
 Suppose $x, y \in V(G)$ where $x \ne y$ and $G'=G+xy$ is a block (which
holds, in particular, if $G$ has a Hamilton $xy$-path). Then $G$ has no
$K_{2,2}$ minor rooted at $x$ and $y$ if and only if $G$ is
xy-outerplanar.
 \end{lemma}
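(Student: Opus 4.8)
The statement is an equivalence, so I would prove the two directions separately, and the easy direction first. Suppose $G$ is $xy$-outerplanar, with outer path $P$ a Hamilton $xy$-path lying on the boundary of a disk $D$. I claim $G$ has no $K_{2,2}$ minor rooted at $x$ and $y$. Suppose for contradiction it did; take a standard model $\Sigma(R_1,R_2;s_1,s_2)$ with $x\in R_1$ and $y\in R_2$. The branch sets $R_1$, $R_2$, $\{s_1\}$, $\{s_2\}$ are disjoint, $G[R_1]$ and $G[R_2]$ are connected, and there are edges $s_1$-$R_1$, $s_1$-$R_2$, $s_2$-$R_1$, $s_2$-$R_2$. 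Within $R_1$ choose a path from $x$ to the neighbor of $s_1$ and one from $x$ to the neighbor of $s_2$; their union together with $s_1$ and $s_2$ gives a connected subgraph; similarly inside $R_2$ from $y$. Stitching these pieces together produces, in $G$, four internally disjoint paths forming a subdivision of $K_{2,2}$ with the two degree-$2$ branch vertices $s_1,s_2$ and the two degree-$2$ vertices $x,y$ — i.e.\ a cycle through $x,s_1,y,s_2$ in that cyclic order. But a cycle $C$ with vertices appearing in the cyclic order $x,s_1,y,s_2$ cannot be drawn with $x,y$ both on the boundary of $D$ and $P$ (a Hamilton $xy$-path) on the boundary: the two arcs of $C$ between $x$ and $y$ must each contain a vertex ($s_1$ and $s_2$ respectively) drawn strictly inside $D$, but each such arc is a path between two boundary vertices, and one of the two "sides" of that arc — together with the boundary arc of $D$ — traps the other arc, forcing a crossing. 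More carefully: since all of $V(G)$ lies on $P\subseteq\partial D$, every vertex is on the boundary, so $s_1$ and $s_2$ are on $P$; then reading along $P$ from $x$ to $y$ we encounter $s_1$ and $s_2$ in some order, say $x,\dots,s_1,\dots,s_2,\dots,y$, but the cyclic order on $C$ is $x,s_1,y,s_2$, which contradicts planarity of the embedding (the chord-like arcs of $C$ must cross). So no such rooted minor exists.

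For the converse, assume $G$ has no $K_{2,2}$ minor rooted at $x$ and $y$, and $G'=G+xy$ is a block; I must construct an $xy$-outerplane embedding. The first reduction is to get a Hamilton $xy$-path. Since $G'$ is $2$-connected (or $K_2$), a theorem I would invoke here — this is really the content of the lemma and is presumably where \cite{K24char} does the work — is that the absence of a rooted $K_{2,2}$ forces a very restrictive structure: roughly, $G$ must be obtainable by "stacking" along a path from $x$ to $y$. Concretely, the plan is to induct on $|V(G)|$. If $|V(G)|\le 2$ this is trivial. Otherwise, using $2$-connectedness of $G'$, pick the edge of $G'$ incident with $x$ along the $x$-side; more usefully, consider an ear decomposition or an $st$-numbering of $G'$ from $x$ to $y$. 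The $st$-numbering gives an ordering $x=u_1,u_2,\dots,u_n=y$ with every $u_i$ ($1<i<n$) having an earlier and a later neighbor. The claim to establish is that this ordering can be taken so that $u_iu_{i+1}\in E(G)$ for all $i$ — i.e.\ it is a Hamilton path — and moreover the embedding placing $u_1,\dots,u_n$ left to right on $\partial D$ with all other edges drawn inside is planar. If two edges $u_au_b$ and $u_cu_d$ with $a<c<b<d$ were both present (a "crossing pair" of chords), then contracting $P[u_1,u_a]$, $P[u_b,u_c]$ wait — rather, the four arcs $P[u_a,u_c],P[u_c,u_b],P[u_b,u_d]$ and the chords $u_au_b,u_cu_d$ together with the initial segment to $x$ and final segment to $y$ yield a $K_{2,2}$ minor rooted at $x$ and $y$ (branch sets: $\{u_1,\dots,u_a\}\cup\{u_b,\dots,u_c\}$ containing $x$, $\{u_d,\dots,u_n\}$ containing $y$, and singletons... — the exact bookkeeping is the routine part). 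That contradiction gives planarity, hence the embedding.

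The main obstacle is the converse, specifically producing the Hamilton $xy$-path: once a Hamilton path is in hand, the no-crossing-chords argument via extracting a rooted $K_{2,2}$ is straightforward, but showing the path exists (equivalently, that a $K_{2,2}$-rooted-minor-free block is Hamilton-connected between $x$ and $y$ through all vertices) needs the structural input. I would handle this by strong induction: take a vertex $v\ne x,y$, look at $G-v$; if $G-v+xy$ is still a block we may be able to recurse and then re-insert $v$ using the fact that $v$'s neighborhood in the outerplane drawing must be an interval of the outer path (else $v$ together with two non-consecutive neighbors and the rest gives a rooted $K_{2,2}$) — then $v$ can be inserted into the outer path in that interval. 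The delicate case is when $G-v+xy$ is no longer $2$-connected, i.e.\ $v$ lies in every nontrivial cut; here one decomposes $G$ at the relevant cutvertex/$2$-cut, applies induction to the pieces with appropriately chosen roots, and glues — and checking that the glued object has no rooted $K_{2,2}$ and that the pieces' roots are chosen consistently is the technical heart. Since the lemma is quoted from \cite{K24char}, for the purposes of this paper I would simply cite it; the sketch above indicates why it is true.
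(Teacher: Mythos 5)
The paper does not actually prove Lemma~\ref{lem:rooted22}; it is imported verbatim from \cite{K24char}, so there is no in-paper argument to compare against. Judged on its own terms, your sketch gets one direction essentially right and leaves the other with a genuine gap.

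The forward direction ($xy$-outerplanar implies no rooted $K_{2,2}$ minor) is sound in spirit, though the step ``stitching these pieces together produces \dots a subdivision of $K_{2,2}$ with \dots degree-$2$ vertices $x,y$'' is not literally justified: the two paths inside $R_1$ from $x$ to the neighbours of $s_1$ and of $s_2$ need not be internally disjoint, so $x$ need not be a branch vertex of an honest subdivision. The cleaner finish is the interleaving argument you gesture at afterward: pass to $G'=G+xy$, outerplanar with outer cycle $P+xy$; the disjoint connected branch sets $R_1\ni x$ and $R_2\ni y$ cannot interleave on that cycle, so each occupies an arc, and since $x$ and $y$ are consecutive on $P+xy$ the vertices $s_1,s_2$ (each adjacent into both $R_1$ and $R_2$) are both forced onto the one remaining arc, where the edges $s_1$--$R_2$ and $s_2$--$R_1$ must cross.

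The converse is where all the content is, and your sketch does not supply it. Both strategies you float stop exactly at the step that matters. The $st$-numbering idea requires the claim that ``this ordering can be taken so that $u_iu_{i+1}\in E(G)$,'' which is tantamount to the conclusion and is not argued. The vertex-deletion induction is fine in the easy case (when $G-v+xy$ is still a block and $v$'s neighbourhood in the recursive embedding is an interval), but you explicitly flag the hard case --- $G-v+xy$ losing $2$-connectivity, decomposing along $2$-cuts through $v$, recursing with re-chosen roots, and re-gluing $xy$-outerplane pieces while verifying no rooted $K_{2,2}$ appears --- as ``the technical heart'' and then do not resolve it. That is precisely the work \cite{K24char} does, and without it the Hamilton $xy$-path is not obtained (it is a \emph{consequence}, not a hypothesis). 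As a self-contained proof the proposal is incomplete; as a justification for citing \cite{K24char}, which is all this paper does, it is adequate.
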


 The following results on Hamilton paths in outerplanar and
$xy$-outerplanar graphs will be useful.

\begin{lemma}
Let $G$ be a $2$-connected outerplanar graph. Let $x \in V(G)$ and let
$xy$ be an edge on the outer cycle $Z$ of $G$. Then for some vertex $t$
with $\deg_G(t)=2$, there exists a Hamilton path $xy\ldots t$ in $G$.
\label{lem:P0}
\end{lemma}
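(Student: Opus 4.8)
The plan is to induct on $|V(G)|$, using the standard observation that a $2$-connected outerplanar graph has an "ear-like" structure near the outer cycle. The base cases are small: if $G$ is a single edge or a triangle, a Hamilton path $xy\ldots t$ is immediate, with $t=y$ (degree-$2$) in the triangle case. For the inductive step, let $Z$ be the outer cycle of $G$ and let $xy$ be the given outer edge. The key structural fact I would invoke is that, since $G$ is $2$-connected outerplanar, every inner face is bounded by a cycle, and the "last" inner face along $Z$ on the side away from $xy$ gives a vertex $w \ne x,y$ that lies on $Z$, is incident to exactly one inner face, and all of whose neighbors lie on that face. Equivalently, I would pick an inner face $F$ whose boundary contains a maximal path of $Z$ not using the edge $xy$; the interior vertices of that $Z$-path have degree $2$ in $G$ (their only neighbors are their two $Z$-neighbors).

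First I would handle the easy configuration: if some vertex $t \neq x, y$ on $Z$ already has $\deg_G(t) = 2$, I want a Hamilton path starting $xy$. Here I would instead argue more cleverly — rather than removing $t$, remove a degree-$2$ vertex $v$ on the $xy$-avoiding side that is "far" from the target, recurse on $G - v$ (still $2$-connected outerplanar, with $xy$ still on its outer cycle since $v$ has degree $2$), obtain a Hamilton path $xy\ldots t'$ in $G-v$ ending at a degree-$2$ vertex $t'$ of $G-v$, and then splice $v$ back in. The splicing works because $v$'s two neighbors $p,q$ are consecutive on $Z$ and hence the edge $pq$ (a chord or $Z$-edge) is present; if the Hamilton path of $G-v$ uses edge $pq$, replace it by $p v q$; if it ends at $p$ or $q$, extend by $v$. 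The main case analysis is ensuring one of these always applies and that the new endpoint still has degree $2$ in $G$ — which it does, since adding $v$ back does not change the degree of any vertex other than $p, q$, and I can choose $v$ so that $t' \notin \{p,q\}$ (possible because there is more than one candidate degree-$2$ vertex unless $G$ is very small, which is a base case).

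The step I expect to be the main obstacle is guaranteeing that the recursion keeps the edge $xy$ on the outer cycle \emph{and} that I can always route the spliced path so that it starts with the edge $xy$ rather than merely containing $x$ and $y$ as endpoints in some order. The cleanest fix is to strengthen the inductive hypothesis slightly: prove that for every $2$-connected outerplanar $G$ and every edge $xy$ of $Z$, and for \emph{each} of the two ends of $Z\setminus xy$, there is a Hamilton path $xy\ldots t$ with $t$ of degree $2$ lying on that chosen side. With this bookkeeping, when I delete a degree-$2$ vertex $v$ from one side I recurse asking for the path to terminate on the \emph{other} side (or at $v$'s former neighbor), which keeps $t'$ away from $\{p,q\}$ and lets the reinsertion preserve the "starts with $xy$" property. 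The remaining details — that deleting $v$ leaves $G$ $2$-connected, that the outer cycle of $G-v$ is $Z$ with $v$ shortcut out, and that a single reinsertion case always fires — are routine planarity/outerplanarity facts.
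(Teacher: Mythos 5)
There is a genuine gap, and it is fatal to the approach as written. You assert two things that are both false for $2$-connected outerplanar graphs in general: that the two neighbors $p,q$ of a degree-$2$ vertex $v$ are joined by an edge of $G$, and that $G-v$ remains $2$-connected. Both fail already for a $4$-cycle $x y a b$: the vertex $a$ has degree $2$, its neighbors $y$ and $b$ are not adjacent, and $G-a$ is a path, so it has no outer cycle and your inductive hypothesis does not apply. More generally, a $6$-cycle $v_1\dots v_6$ with the single chord $v_1v_4$ is $2$-connected outerplanar, yet deleting \emph{any} degree-$2$ vertex leaves a vertex of degree $1$; there is no degree-$2$ vertex you can delete and stay $2$-connected. (A degree-$2$ vertex $v$ has its two neighbors on the outer cycle on \emph{opposite} sides of $v$, not consecutive, so $pq$ is only an edge when $v$ happens to sit on a triangular inner face.) Strengthening the inductive hypothesis to control where $t'$ lands does not help, because the recursion never gets off the ground: $G-v$ may simply fail to be $2$-connected.

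The paper avoids this by recursing on an induced subgraph cut off by a \emph{chord}, not on a vertex-deleted graph. Concretely, let $w$ be the neighbor of $x$ on $Z$ other than $y$. If $\deg_G(w)=2$, the Hamilton path is just $Z$ traversed from $x$ through $y$ around to $w$. Otherwise $w$ has an incident chord $vw$, and the subgraph $G'$ induced by the arc of $Z$ from $v$ to $w$ (on the side away from $x$) is automatically $2$-connected (it contains the cycle $vZw$ plus the chord $vw$) and $vw$-outerplanar, so induction yields a Hamilton path $vw\ldots t$ of $G'$ with $\deg_{G'}(t)=\deg_G(t)=2$; prepending the arc $x\ldots v$ of $Z$ completes the path. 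The crucial difference is that cutting along a chord always produces a $2$-connected outerplanar subgraph, whereas deleting a degree-$2$ vertex usually does not. If you want to salvage a deletion-based argument you would need to first show that a suitable triangular ear exists, which is not true in general; the chord-based reduction is the standard and correct route here.
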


\begin{proof}
 Fix a forward direction on $Z$ so that $y$ follows $x$.
 Denote by $v_1Zv_2$ the forward path from $v_1$ to $v_2$ on $Z$.
 Proceed by induction on $|V(G)|$. In the base case, $G=K_3$ and the
result is clear. Assume the lemma holds for all graphs with at most
$n-1$ vertices and $|V(G)|=n \geq 4$.
 Let $w \neq y$ be the other
neighbor of $x$ on $Z$. If $\deg_G(w)=2$, then we take $t=w$ and
$xZw$ is a desired Hamilton path in $G$. Otherwise let $v$ be a neighbor
of $w$ such that $vw \notin E(Z)$ (possibly $v=y$). Let $G'$ be the
subgraph of $G$ induced by $vZw$; $G'$ is a $2$-connected $vw$-outerplanar
graph with $|V(G')|\leq n-1$. By the inductive hypothesis, there exists
a Hamilton path $Q=vw\ldots t$ in $G'$ where
$\deg_{G'}(t)=\deg_G(t)=2$. Then $xZv \cup Q$ is the desired
path in $G$.
\end{proof}

\begin{corollary}
 Let $G$ be an $xy$-outerplanar graph with $x \ne y$.  Then there exists
a Hamilton path $x \ldots t$ in $G-y$, where $t=x$ if $|V(G)|=2$, and $t$
is some vertex with $\deg_G(t)=2$ otherwise.
\label{cor:P1}
\end{corollary}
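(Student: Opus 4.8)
The plan is to close up the outer path of $G$ into a Hamilton cycle, so as to obtain a $2$-connected outerplanar graph, and then to invoke Lemma~\ref{lem:P0} with the roles of the two endpoints of that path reversed.

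If $|V(G)|=2$ then $G-y$ consists of the single vertex $x$, which is a Hamilton path with $t=x$, exactly as claimed; so assume $n:=|V(G)|\ge 3$. Let $P$ be the outer path of an $xy$-outerplane embedding of $G$ in a disk $D$, so that $P$ is a Hamilton $xy$-path lying on $\partial D$, and set $G'=G$ if $xy\in E(G)$ and $G'=G+xy$ otherwise. In either case $G'$ is simple, $C:=P+xy$ is a Hamilton cycle of $G'$ (so $G'$ is $2$-connected), and $G'$ differs from $G$ in at most the single edge $xy$. Since $x\ne y$, the path $P$ occupies a proper arc of $\partial D$ whose complementary arc $\beta$ (running from $y$ to $x$) carries no vertex of $G$ in its interior; drawing the edge $xy$ alongside $\beta$ (or rerouting it there if it is already present) turns the given embedding into a genuine outerplane embedding of $G'$ whose outer cycle is exactly $C$, and $C$ contains the edge $xy$.

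Now I would apply Lemma~\ref{lem:P0} to $G'$, taking its distinguished vertex to be $y$ and its distinguished outer-cycle edge to be $yx$. This yields a vertex $t$ with $\deg_{G'}(t)=2$ together with a Hamilton path $Q=yx\ldots t$ of $G'$. Since $n\ge 3$, the path $Q$ has at least three vertices, so $t\notin\{x,y\}$; as $xy$ is the only possible difference between $G$ and $G'$ and it is incident only to $x$ and $y$, it follows that $\deg_G(t)=\deg_{G'}(t)=2$. Deleting the endpoint $y$ from $Q$ leaves a Hamilton path $x\ldots t$ of $G'-y$, and $G'-y=G-y$ because the possibly-added edge $xy$ vanishes together with $y$; this is the required Hamilton path of $G-y$ from $x$ to the degree-$2$ vertex $t$.

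The only mildly delicate step is the conversion in the second paragraph: the definition of ``$xy$-outerplanar'' only forces the outer \emph{path} onto $\partial D$, so one must argue that this disk embedding genuinely upgrades to an outerplane embedding of $G'=G+xy$ having $C$ as its outer cycle. Everything else is routine bookkeeping about which vertices and edges lie in $G$ as opposed to $G'$.
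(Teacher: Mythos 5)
Your proof is correct and takes essentially the same route as the paper: adjoin the edge $xy$ to obtain a $2$-connected outerplanar graph, apply Lemma~\ref{lem:P0} starting at $y$ along the edge $yx$, and delete $y$ from the resulting Hamilton path. You are simply more explicit than the paper about a few routine checks (handling the case $xy\in E(G)$, upgrading the $xy$-outerplane embedding to an outerplane embedding of $G+xy$, and verifying $t\notin\{x,y\}$ so that $\deg_G(t)=\deg_{G'}(t)=2$), all of which the paper passes over silently.
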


 \begin{proof}
 If $|V(G)| = 2$ this is clear, so suppose that $|V(G)| \ge 3$.
 Then $G+xy$ is a $2$-connected outerplanar graph, so by Lemma
\ref{lem:P0} it has a Hamilton path $yx \ldots t$ ending at a vertex $t$
of degree $2$.  Now $P-y$ is the required path.
 \end{proof}

 \subsection{Connectivity and reducibility}

 The following observation will be useful.

 \begin{lemma}\label{lem:insidecycle}
 Suppose $G$ is a $2$-connected plane graph and $C$ is a cycle in $G$. 
Then the subgraph of $G$ consisting of $C$ and all edges and vertices inside
$C$ is $2$-connected.
 \end{lemma}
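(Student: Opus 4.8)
The plan is to argue by contradiction: assuming the subgraph $H$, consisting of $C$ together with all vertices and edges drawn inside $C$, has a cutvertex, I will produce a cutvertex of $G$, contradicting $2$-connectivity. Fix the plane embedding of $G$ and let $D$ be the closed disk bounded by $C$, so that $V(H)$ is the set of vertices lying in $D$ and $E(H)$ the set of edges drawn in $D$. First I would record two easy facts. Since $C$ is a cycle, $|V(H)| \ge 3$. And $H$ is connected: given a vertex $u$ strictly inside $C$, take any path in $G$ from $u$ to a vertex of $C$ and truncate it at the first vertex it meets on $C$; every edge of the truncated path has at least one endpoint strictly inside $C$ and so is drawn inside $D$, hence the truncated path lies in $H$. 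Thus it suffices to show $H$ has no cutvertex.

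Now suppose $v \in V(H)$ is a cutvertex of $H$. Since $C$ has length at least $3$, $C - v$ is a nonempty path, hence connected, so it lies in a single component $A$ of $H - v$. Let $B$ be any other component of $H - v$; then $V(B) \cap V(C) = \emptyset$, so every vertex of $B$ is strictly inside $C$. The crucial step is to show that in $G$ every edge incident to a vertex of $B$ has its other endpoint in $V(B) \cup \{v\}$. Let $b \in V(B)$ and $bw \in E(G)$. The vertex $w$ cannot lie strictly outside $C$: since $b$ is strictly inside $C$ and, in that case, $w \notin V(C)$, the arc representing $bw$ would have to cross the Jordan curve $C$, which is impossible in a plane embedding. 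So $w \in D$, and since $b$ is strictly interior the edge $bw$ is drawn inside $D$; hence $bw \in E(H)$ and $w \in V(H)$. If $w \ne v$, then $w$ lies in the component of $H - v$ containing $b$, namely $B$ (in particular $w \notin V(C)$, as $V(C) \setminus \{v\} \subseteq V(A)$). Therefore $w \in V(B) \cup \{v\}$, as claimed.

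It follows that every path of $G$ from $V(B)$ to $V(G) \setminus (V(B) \cup \{v\})$ passes through $v$; since $V(B) \ne \emptyset$ and $V(C) \setminus \{v\}$ is a nonempty subset of $V(G) \setminus (V(B) \cup \{v\})$, the vertex $v$ is a cutvertex of $G$, contradicting $2$-connectivity. Hence $H$ has no cutvertex, and being connected with at least three vertices it is $2$-connected. The only genuinely nontrivial ingredient is the appeal to the Jordan curve theorem that forbids an edge joining the inside of $C$ to the outside; the rest is routine tracking of components. The one point requiring care is the definition of $H$ — chords of $C$ drawn outside $C$ are not edges of $H$ — but this causes no difficulty here, since every edge relevant to the argument has an endpoint strictly inside $C$ and is therefore forced to be drawn inside $D$.
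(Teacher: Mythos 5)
Your proof is correct and takes the same approach as the paper, which states simply that any cutvertex of the subgraph would also be a cutvertex of $G$; you have carefully filled in the details of why that is so, using the Jordan curve argument to show that a component of $H-v$ avoiding $C$ can only attach to the rest of $G$ through $v$.
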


 \begin{proof}
 Any cutvertex in the subgraph would also be a cutvertex of $G$.
 \end{proof}

 The following results will allow us to simplify the situations that we
have to deal with in the proof of Theorem~\ref{thm:main}.

\begin{theorem}[Halin, {\cite[Theorem 7.2]{Halin}}] Let $G$ be a
$3$-connected graph with $|V(G)| \geq 5$. Then for every $v \in V(G)$ with
$\deg(v) = 3$, there is an edge $e$ incident with $v$ such that
$G/e$ is $3$-connected.
\label{thm:Halin}
\end{theorem}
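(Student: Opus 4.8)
The plan is a proof by contradiction resting on two ingredients: first, if contracting an edge at $v$ fails to preserve $3$-connectivity, it exposes a $3$-cut of $G$ through $v$; second, a smallest ``fragment'' cut off by such a $3$-cut cannot exist, because $\deg(v)=3$ forces these $3$-cuts to be very constrained.

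For the first ingredient, write $x_1,x_2,x_3$ for the neighbours of $v$. Since $|V(G)|\ge 5$, each $G/vx_i$ has at least four vertices, and contracting an edge of a $3$-connected graph yields a $2$-connected graph (a cutvertex or $1$-separation of $G/vx_i$ lifts to one of $G$, using that deleting the contracted vertex $w_i$ from $G/vx_i$ is the same as deleting $\{v,x_i\}$ from $G$). So if $G/vx_i$ is not $3$-connected it has a $2$-cut, which must contain $w_i$ (otherwise it is a $2$-cut of $G$); hence $S_i:=\{v,x_i,t_i\}$ is a $3$-cut of $G$ for some $t_i\notin\{v,x_i\}$. I would then observe that $t_i$ is not a neighbour of $v$: if, say, $t_1=x_2$, then in $G-\{v,x_1,x_2\}$ every component must meet $N(v)$ (else $\{x_1,x_2\}$ is a $2$-cut), but the only available neighbour is $x_3$, so $G-\{v,x_1,x_2\}$ is connected, contradicting that $S_1$ is a cut. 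The same observation shows $G-S_i$ has exactly two components, each adjacent to all three vertices of $S_i$, with exactly one of them containing each neighbour $x_j$ ($j\ne i$).

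Now assume all of $G/vx_1,G/vx_2,G/vx_3$ fail to be $3$-connected and let $S_1,S_2,S_3$ be as above. Among the six components of $G-S_1,G-S_2,G-S_3$ (call them fragments) I would pick one, $A$, of minimum order; after relabelling, take $A$ to be a component of $G-S_1$ with $x_2\in A$, and let $B$ be the other component, so $V(G)=S_1\sqcup A\sqcup B$ with no $A$--$B$ edges, $x_3\in B$, and $x_2$ the unique neighbour of $v$ in $A$. The crucial consequence of minimality is that no component of any $G-S_j$ is a proper subset of $A\setminus S_j$. Now examine $S_2=\{v,x_2,t_2\}$. If $t_2\notin B$, then $(B\cup\{x_1,t_1\})\setminus S_2$ induces a connected subgraph containing $x_1$ (since $B$ is connected and adjacent to both $x_1$ and $t_1$), so it lies in the single component $A'$ of $G-S_2$ containing $x_1$; the other component $A''$ is then disjoint from $B\cup\{x_1,t_1\}$, so $A''\subseteq A\setminus\{x_2\}$, which is smaller than $A$ --- a contradiction. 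This disposes of the cases $t_2\in A$ and $t_2=t_1$.

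The step I expect to be hardest is the remaining case $t_2\in B$ (and, symmetrically, $t_3$ lying in the component of $G-S_1$ containing $x_3$). Here one tracks the component $A''$ of $G-S_2$ containing $x_3$: it avoids $x_1$, and $B$ attaches to the rest of the graph only through $x_1$ and $t_1$ once $v$ is removed, so $A''\subseteq B\cup\{t_1\}$; but $A''$ must be adjacent to $x_2\in A$, and as there are no $A$--$B$ edges this can only happen through $t_1$, forcing $t_1\in A''$ and $x_2t_1\in E(G)$. This pins the configuration down tightly, and bringing in the third cut $S_3$ then yields a contradiction. The main obstacle is exactly this endgame: all three $3$-cuts contain $v$, may share third vertices, and may cross one another, so one must do a careful case analysis on where $t_1,t_2,t_3,x_3$ sit relative to the partition $V(G)=S_1\sqcup A\sqcup B$. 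The uniform mechanism closing every case is that each fragment of each $G-S_i$ is adjacent to all of $S_i$ while $A$ and $B$ are mutually non-adjacent, which together with the minimality of $|V(A)|$ and $\deg(v)=3$ leaves no room for all three contractions to fail.
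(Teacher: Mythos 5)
The paper cites Halin for this result and gives no proof of its own, so there is nothing in the paper to compare against; your argument has to stand alone. Its setup is correct: if $G/vx_i$ is not $3$-connected then any $2$-cut of $G/vx_i$ must use the merged vertex, so $S_i=\{v,x_i,t_i\}$ is a $3$-cut of $G$; $t_i$ is not another neighbour of $v$; and $G-S_i$ has exactly two components, each containing exactly one of the two remaining neighbours of $v$ and each adjacent to all of $S_i$. The case $t_2\notin B$ (which covers $t_2\in A$ and $t_2=t_1$) is also fine.

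The gap is in the case $t_2\in B$, which you yourself flag as the hard part. First, the claim $A''\subseteq B\cup\{t_1\}$ does not follow from the stated reason (``$B$ attaches to the rest of the graph only through $x_1$ and $t_1$ once $v$ is removed''); that observation shows a path in $A''$ leaving $B$ must pass through $t_1$ (since $x_1\notin A''$), but it does not show $A''$ stops at $t_1$ rather than continuing into $A$. Ruling out $A''\cap A\neq\emptyset$ needs a further argument that you don't supply --- for instance, a component $K$ of $G[A''\cap A]$ would satisfy $N_G(K)\subseteq\{x_2,t_1\}$ (note $v$ has no neighbour in $K\subseteq A\setminus\{x_2\}$), contradicting $3$-connectedness. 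Second, and more seriously, even after you reach $t_1\in A''$ and $x_2t_1\in E(G)$, the proof stops at ``bringing in the third cut $S_3$ then yields a contradiction'' without actually producing one; as written, the argument is not complete. A small but telling slip: your parenthetical ``$t_3$ lying in the component of $G-S_1$ containing $x_3$'' is backwards --- the same argument that forces $t_2\in B$ forces $t_3\in A$, the component containing $x_2$. One way to finish cleanly, and without the minimality bookkeeping at all: having $t_2\in B$ and $t_3\in A$, let $A_2\ni x_1$ and $B_2\ni x_3$ be the components of $G-S_2$; the same lemma applied to $S_2$ gives $t_1\in B_2$ and $t_3\in A_2$. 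Put $D=A\cap A_2$, which is nonempty (it contains $t_3$) and avoids $v,x_1,x_3,t_1,t_2$. Any $G$-neighbour of $D$ outside $A$ is in $\{v,x_1,t_1\}$ and any outside $A_2$ is in $\{v,x_2,t_2\}$; intersecting and discarding $v$ (which has no neighbour in $D$) gives $N_G(D)\setminus D\subseteq\{x_1,x_2\}$, a $\le 2$-cut separating $D$ from $B$, contradicting $3$-connectedness.
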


A \textit{$k$-separation} in a graph $G$ is a pair $(H,K)$ of
edge-disjoint subgraphs of $G$ with $G=H\cup K$, $|V(H) \cap V(K)|=k$,
$V(H)-V(K) \neq \emptyset$, and $V(K)-V(H) \neq \emptyset$.


 \begin{lemma}
 Let $G$ be a $3$-connected graph and suppose $(H,K)$ is a
$3$-separation in $G$ with $V(H) \cap V(G) = \{x,y,z\}$. Suppose
$K'=K-V(H)$ is connected and $H$ is $2$-connected. Let $G'$ be the graph formed
from $G$ by contracting $K'$ to a single vertex. Then $G'$ is
$3$-connected.
 \label{lem:sep}
 \end{lemma}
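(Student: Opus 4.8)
The plan is to prove $3$-connectivity of $G'$ directly from the definition: show $G'$ has at least four vertices and admits no vertex cut of size at most two. Write $w$ for the vertex of $G'$ obtained by contracting $K'$; its branch set is $V(K')=V(K)\setminus\{x,y,z\}$, which is disjoint from $V(H)$, so $V(G')=V(H)\cup\{w\}$. Since $(H,K)$ is an edge-disjoint separation with $V(H)\cap V(K)=\{x,y,z\}$, deleting $w$ simply undoes the contraction: $G'-w=G-V(K')=G[V(H)]$, and every edge of $G$ with both ends in $V(H)$ lies in $H$ except possibly some edges among $x,y,z$, so $G'-w$ contains $H$ as a spanning subgraph and is therefore $2$-connected. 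In particular $|V(G')|=|V(H)|+1\ge 4$, because a $2$-connected graph has at least three vertices.

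Now suppose, for a contradiction, that $T\subseteq V(G')$ with $|T|\le 2$ and $G'-T$ disconnected; the argument splits on whether $w\in T$. If $w\in T$, then $T\setminus\{w\}$ is a set of at most one vertex of the $2$-connected graph $G'-w$, so $G'-T=(G'-w)-(T\setminus\{w\})$ is connected, a contradiction. Hence $w\notin T$, so $T\subseteq V(H)\subseteq V(G)$, and in particular $T$ is disjoint from the contracted set $V(K')$. Because $T\cap V(K')=\emptyset$, deleting $T$ and contracting $K'$ commute, so $G'-T$ is obtained from $G-T$ by contracting $K'$ to the vertex $w$; since $G$ is $3$-connected and $|T|\le 2$, the graph $G-T$ is connected, and a contraction of a connected graph is connected, so $G'-T$ is connected, again a contradiction. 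Therefore $G'$ has no vertex cut of size at most two, and with $|V(G')|\ge 4$ this shows that $G'$ is $3$-connected.

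I do not expect a genuine obstacle. The only points needing care are (i) the bookkeeping that $G'-w$ is a supergraph of $H$ on vertex set $V(H)$, which uses the exact form of the separation and lets the hypothesis "$H$ is $2$-connected" dispose of the case $w\in T$; and (ii) the elementary fact that vertex deletion outside $K'$ commutes with contracting $K'$, together with "a contraction of a connected graph is connected," which disposes of the case $w\notin T$. The hypothesis that $K'$ is connected is needed only so that "contract $K'$ to a single vertex" is the natural operation. In particular, no case analysis on the $2$-cuts of $H$, and no information about which of $x,y,z$ are adjacent to $w$, is required.
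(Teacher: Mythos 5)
Your proof is correct, and it takes a genuinely different and more economical route than the paper's. The paper verifies Menger's condition directly: it shows three internally disjoint paths between each pair of vertices of $G'$, splitting into five cases according to whether the two vertices lie in $V(H)\setminus\{x,y,z\}$, in $\{x,y,z\}$, or are the contracted vertex $v$. Your argument instead rules out small vertex cuts $T$ of $G'$ by a clean dichotomy on whether the contracted vertex $w$ lies in $T$. The case $w\in T$ reduces to the $2$-connectivity of $G'-w = G[V(H)]\supseteq H$, and the case $w\notin T$ is dispatched by the commutativity of deletion (outside $K'$) with contraction of $K'$, plus the facts that $G-T$ is connected for $|T|\le 2$ and that contracting a connected subgraph preserves connectivity. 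You get the same conclusion with much less case analysis; the paper's Menger-based proof is more explicit about where the three paths come from (which is in the spirit of the constructive path-and-cycle arguments used throughout the paper), but your cut-based argument is shorter and arguably clearer. One small bookkeeping point you handled correctly: since $T\subseteq V(H)$ is disjoint from $V(K')$, the subgraph $K'$ survives intact in $G-T$ and remains connected, which is what licenses the commutation step.
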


\begin{proof} Let $v$ be the vertex in $G'$ formed from contracting
$K'$.
 Since $G$ is $3$-connected, $xv, yv, zv \in E(G')$.  We claim that
every pair of vertices in $G'$ has three vertex-disjoint paths between
them. By Menger's Theorem, it will follow that $G'$ is $3$-connected. We
consider five different types of pairs of vertices.

First, suppose $w_1,w_2 \in V(H)-\{x,y,z\}$; there are three internally
disjoint paths from $w_1$ to $w_2$ in $G$: $P_1$, $P_2$, and $P_3$. If
$V(P_i) \cap V(K') = \emptyset$ for $i=1,2,3$, then $P_1$, $P_2$, and
$P_3$ are the desired paths in $G'$. If $V(P_i) \cap V(K') \neq
\emptyset$ for some $i$, then $|V(P_i) \cap \{x,y,z\}| \geq 2$ since
$\{x,y,z\}$ separates $K'$ from $H$. Thus $V(P_i) \cap V(K') \neq
\emptyset$ for at most one $i$. Suppose $V(P_1) \cap V(K') \neq
\emptyset$. Then all vertices of $V(P_1) \cap V(K')$ are in a single
subpath of $P_1$ which we replace by $v$ to form a new path $P_1'$. The
paths $P_1'$, $P_2$, and $P_3$ are the desired paths in $G'$.


 Second, consider $w_1 \in V(H)-\{x,y,z\}$ and $w_2 \in \{x,y,z\}$, say
$w_2=x$. If there are not three internally disjoint paths between $w_1$
and $x$ in $G'$, then $w_1$ and $x$ are separated either by a $2$-cut
$\{u_1, u_2\}$ (if $w_1 x \notin E(G)$) or by $w_1x$ and some vertex
$u_1$ (if $w_1 x \in E(G)$).
 Since $w_1$ and $x$ are not separated by a $2$-cut or by an edge and a
vertex in $G$, we may assume that $u_1 = v$.
 But then $u_2$ is a cutvertex in $H$ or $w_1 x$ is a cutedge in $H$,
which is a contradiction since $H$ is $2$-connected. Hence there are
three internally disjoint paths between $w_1$ and $x$.

Third, consider $w_1,w_2 \in \{x,y,z\}$, say $w_1=x$ and $w_2=y$. Because
$H$ is $2$-connected, there are two internally disjoint paths $P_1$ and
$P_2$ from $x$ to $y$ in $H$. Take $P_3=xvy$. Then $P_1, P_2,$ and $P_3$
are the desired paths in $G'$.

Fourth, consider $w_1 \in V(H)-\{x,y,z\}$ and $v$. For any $w_2 \in
V(K')$, there are three internally disjoint paths $P_1$, $P_2$, and
$P_3$ from $w_2$ to $w_1$ in $G$. Without loss of generality, say $x \in
V(P_1)$, $y \in V(P_2)$, and $z \in V(P_3)$. Form $P_1'$ from $P_1$ by
replacing $P_1[w_2, x]$ with $vx$, form $P_2'$ from $P_2$ by replacing
$P_2[w_2, y]$ with $vy$, and form $P_3'$ from $P_3$ by replacing
$P_3[w_2, z]$ with $vz$. The paths $P_1'$, $P_2'$, and $P_3'$ are the
desired paths in $G'$.

 Finally, consider $w_1 \in \{x,y,z\}$, say $w_1=x$, and $v$. By a
consequence of Menger's Theorem, since $H$ is $2$-connected there are
two internally disjoint paths from $\{y,z\}$ to $x$ in $H$, say
$P_1=y\ldots x$ and $P_2=z\ldots x$. Then $P_1'=vy \cup P_1$, $P_2'=vz \cup
P_2$, and $P_3=vx$ are the desired paths in $G'$.
 \end{proof}

 Lemma \ref{lem:sep} is false without the hypothesis that $H$ is
$2$-connected: then we could have $V(H) = \{w,x,y,z\}$ and $E(H) = \{wx,
wy, wz\}$, in which case $G'$ would be isomorphic to $K_{2,3}$, which is
not $3$-connected.

 Now we use the results above to set up a framework that will help to
simplify the graph in our main proof.
 Suppose $G$ is a $3$-connected graph, and $C$ is a cycle in $G$.
 We say that \textit{$G$ is $C$-reducible to a graph $G'$} provided
 (a) $G'$ is obtained from $G$ by contracting edges of
$G$ with at most one end on $C$ and/or deleting edges in $E(G)-E(C)$,
 (b) $G'$ is $3$-connected, and
 (c) for every cycle $Z'$ in $G'$ there is a cycle $Z$ in $G$ with
$|V(Z)| \ge |V(Z')|$.
 By (a), $C$ is still a cycle in $G'$.  From this, we see that
$C$-reducibility is transitive.  Also by (a), $G'$ is a minor of $G$.

 \begin{lemma}\label{lem:redcomp}
 Suppose $C$ is a cycle in a $3$-connected graph $G$.
 If $B$ is a component of $G-V(C)$ with exactly three neighbors on $C$
then $G$ is $C$-reducible to $G/E(B)$, in which $B$ becomes a degree $3$
vertex.
 \end{lemma}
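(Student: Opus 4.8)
The plan is to verify the three defining conditions (a), (b), (c) of $C$-reducibility for $G' := G/E(B)$; write $b$ for the vertex of $G'$ obtained by contracting $B$. Since $B$ is a component of $G - V(C)$, no vertex of $V(G) - V(C)$ is adjacent to $V(B)$, so the neighborhood of $V(B)$ in $G$ is exactly the set $\{x,y,z\}$ of the three neighbors of $B$ on $C$. In particular every edge of $B$ has both ends in $V(B) \subseteq V(G) - V(C)$, so no contracted edge has an end on $C$ and no edge is deleted; this gives condition (a), and $C$ is still a cycle in $G'$. Moreover $N_{G'}(b) = \{x,y,z\}$, so $\deg_{G'}(b) = 3$, as asserted.

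The main point---and the step I expect to be the real obstacle---is to show that $H := G - V(B)$ is $2$-connected. I would use that $H$ contains the cycle $C$, so $|V(H)| \ge 3$ and each of $x, y, z$ has at least two neighbors in $H$. If $H$ were not $2$-connected there would be a set $S \subseteq V(H)$ with $|S| \le 1$ such that $H - S$ has at least two components; since these components partition at most three vertices of $\{x,y,z\}$, we may pick a component $H_1$ with $|V(H_1) \cap \{x,y,z\}| \le 1$. Because $V(B)$ is adjacent in $G$ only to $\{x,y,z\}$, every vertex of $V(H_1) - (\{x,y,z\} \cup S)$ has all of its $G$-neighbors in $V(H_1) \cup S$, so the set $(V(H_1) \cap \{x,y,z\}) \cup S$, of size at most $2$, separates $V(H_1) - (\{x,y,z\} \cup S)$ from $V(B)$ in $G$. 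If $V(H_1) - (\{x,y,z\} \cup S) \ne \emptyset$, this is a cut of size at most $2$ in the $3$-connected graph $G$, a contradiction; otherwise $V(H_1) \subseteq \{x,y,z\}$ (as $V(H_1) \cap S = \emptyset$), so $H_1$ is a single vertex $v \in \{x,y,z\}$, whence $N_H(v) \subseteq S$ and $\deg_H(v) \le 1$, contradicting that $v$ has two neighbors on $C \subseteq H$. So $H$ is $2$-connected. (This is the only place the hypothesis ``exactly three neighbors, all on $C$'' is used in an essential way.)

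For condition (b), let $K$ be the subgraph of $G$ with vertex set $V(B) \cup \{x,y,z\}$ and edge set all edges of $G$ incident with $V(B)$. If $V(H) = \{x,y,z\}$ then $H$, being $2$-connected, equals $K_3$, and $G'$ is obtained from $H$ by adding $b$ adjacent to all of $x,y,z$, so $G' = K_4$, which is $3$-connected. Otherwise $(H, K)$ is a $3$-separation of $G$ with $V(H) \cap V(K) = \{x,y,z\}$ and $K - V(H) = B$ connected, and $H$ is $2$-connected, so Lemma~\ref{lem:sep} applies: the graph obtained from $G$ by contracting the connected subgraph $B$ to a single vertex is $3$-connected. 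That graph is precisely $G' = G/E(B)$, so (b) holds.

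Finally, for condition (c), let $Z'$ be a cycle of $G'$. If $b \notin V(Z')$ then $Z'$ lies in $G' - b = H \subseteq G$ and $Z := Z'$ works. If $b \in V(Z')$, its two neighbors $p, q$ on $Z'$ are distinct vertices of $\{x,y,z\}$; choose a neighbor $p' \in V(B)$ of $p$, a neighbor $q' \in V(B)$ of $q$, and (since $B$ is connected) a path $R$ in $B$ from $p'$ to $q'$ (a single vertex if $p' = q'$). Then $Z := (Z' - b) \cup pp' \cup R \cup q'q$ is a cycle of $G$ with $|V(Z)| = |V(Z')| - 1 + |V(R)| \ge |V(Z')|$. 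This verifies (c) and completes the proof.
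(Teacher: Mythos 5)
Your proof is correct and follows essentially the same route as the paper's: establish that $H = G - V(B)$ is $2$-connected, invoke Lemma~\ref{lem:sep} for condition (b), and reroute any cycle through $b$ via a path in $B$ for condition (c). Two small remarks: your $2$-connectedness argument is a more detailed case analysis than the paper's (which simply observes that a cutvertex of $H$ would remain a cutvertex of $G$ because $V(C)$ and hence all of $B$'s neighbors lie in a single component of $H-u$), and you correctly flag the degenerate case $V(H)=\{x,y,z\}$, where Lemma~\ref{lem:sep}'s definition of a $3$-separation technically fails to apply --- a detail the paper glosses over but which is trivially resolved as you do, since then $G'=K_4$.
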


 \begin{proof}
 Let $G_0 = G-V(B)$.  If $G_0$ is not $2$-connected, then there
is a cutvertex $u$. Now $u \notin V(C)$ and $V(C)$ must be entirely in
one component of $G_0-u$. Since the neighbors of $B$ are
all on $C$, vertices of $B$ are only adjacent to vertices on one
side of the cut.  Hence $u$ is also a cutvertex in $G$, which is a
contradiction. Thus, $G_0$ is $2$-connected.
 Consider $G'=G/E(B)$.  Clearly (a) holds, and (b) follows from
Lemma~\ref{lem:sep}.

 Let $a_1, a_2, a_3$ be the neighbors of $B$ on $C$, and let $b$ be the
vertex of $G'$ corresponding to $B$.
 Let $Z'$ be a cycle in $G'$. If $b \notin V(Z')$, then $Z=Z'$ is
also a cycle in $G$. If $b \in V(Z)$ then $Z'$ uses a path $a_iba_j$.
 Form a cycle $Z$ in $G$ from $Z'$ by replacing $a_iba_j$ by a path from
$a_i$ to $a_j$ through $B$.  Clearly $|V(Z)| \ge |V(Z')|$, so (c) holds.
 \end{proof}

 \begin{lemma}\label{lem:reddeg3}
 Suppose $C$ is a cycle in a $3$-connected graph $G$.
 If $b \in V(G)-V(C)$ has degree $3$ then there is an edge
$bc$ so that $G$ is $C$-reducible to $G/bc$.
 \end{lemma}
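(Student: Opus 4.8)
The plan is to let $bc$ be the edge supplied by Halin's Theorem~\ref{thm:Halin} applied to the degree-$3$ vertex $b$, and then to verify the three defining conditions of $C$-reducibility for $G' = G/bc$. We may assume $|V(G)| \ge 5$; the only smaller $3$-connected graph is $K_4$, for which the hypotheses force $C$ to be a triangle with $b$ the remaining vertex, a degenerate situation that does not arise in our use of the lemma. Under this assumption Halin's Theorem gives a neighbour $c$ of $b$ with $G/bc$ $3$-connected, which is precisely condition~(b).

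Condition~(a) is immediate: we contract only the single edge $bc$, and since $b \notin V(C)$ this edge has at most one end on $C$; in particular $C$ survives as a cycle of $G'$. Write $w$ for the vertex of $G'$ obtained by identifying $b$ with $c$.

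It remains to check condition~(c) by lifting cycles from $G'$ back to $G$. Let $Z'$ be a cycle of $G'$. If $w \notin V(Z')$ then every edge of $Z'$ is an edge of $G$, so $Z = Z'$ works. If $w \in V(Z')$ then $Z'$ meets $w$ in exactly two edges $wp$ and $wq$, with $p \ne q$ since $G'$ is simple, and each of these lifts in $G$ to an edge incident with $b$ or with $c$; fix one such lift for each. If both chosen lifts are incident with the same vertex of $\{b,c\}$, replacing $w$ in $Z'$ by that vertex gives a cycle $Z$ of $G$ with $|V(Z)| = |V(Z')|$; if one is incident with $b$ and the other with $c$, replacing $w$ by the path $bc$ gives a cycle $Z$ of $G$ with $|V(Z)| = |V(Z')| + 1$. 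Since $b, c \notin V(Z') \setminus \{w\}$, no vertex is repeated, so $Z$ is genuinely a cycle, and in every case $|V(Z)| \ge |V(Z')|$. Hence $G$ is $C$-reducible to $G/bc$.

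The proof is short and I foresee no real obstacle: once $|V(G)| \ge 5$ is in force, condition~(b) is exactly Halin's Theorem and conditions~(a) and~(c) are routine. The one place calling for a little care is the mixed case of~(c): when the two edges of $Z'$ at $w$ lift to edges at different vertices of $\{b,c\}$, the lifted cycle must be routed through \emph{both} $b$ and $c$, so that its length can only increase and never drop below $|V(Z')|$.
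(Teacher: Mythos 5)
Your proposal is correct and follows essentially the same route as the paper: invoke Halin's theorem to get the contractible edge $bc$, note (a) is immediate, and verify (c) by lifting each cycle of $G/bc$ back to $G$ via the edges at the contracted vertex. The only stylistic difference is that the paper names $b$'s two non-$c$ neighbours $a_1,a_2$ and splits cases on whether $a_1z,a_2z$ lie on $Z'$, whereas you phrase the lift more abstractly (and also make the $|V(G)|\ge 5$ hypothesis of Halin's theorem explicit, which the paper leaves implicit); both are fine.
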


 \begin{proof}
 By Theorem \ref{thm:Halin} there is an edge $bc$ such that $G'=G/bc$
is $3$-connected.  Clearly (a) and (b) hold for $G'$; we must show (c).
 Let $a_1, a_2$ and $c$ be the neighbors of $b$ in $G$.
 Call the vertex that results from the contraction $z$. Suppose
$Z'$ is a cycle in $G'$.  If $a_1z, a_2z \notin E(Z')$, then take
$Z=Z'$. If $|\{a_1z, a_2z\} \cap E(Z')|=1$, say $a_1z \in E(Z')$, form
$Z$ from $Z'$ by replacing $a_1z$ with the path $a_1bc$. If $a_1z, a_2z
\in E(Z')$, form $Z$ from $Z'$ by replacing the subpath $a_1za_2$ with
$a_1ba_2$. In all cases, $Z$ is a cycle in $G$ with $|V(Z)| \geq
|V(Z')|$, so (c) holds.
 \end{proof}

 \begin{lemma}\label{lem:rededge}
 Suppose $C$ is a cycle in a $3$-connected graph $G$. Suppose that $a_1
a_2 \in E(G)-E(C)$, and there are three internally disjoint
$a_1a_2$-paths in $G-a_1 a_2$.  Then $G$ is $C$-reducible to $G-a_1a_2$.

 In particular, $G$ is $C$-reducible to $G-a_1a_2$ if $a_1$ and $a_2$
are neighbors on $C$ of a component of $G-V(C)$ and $a_1 a_2 \in
E(G)-E(C)$.
 \end{lemma}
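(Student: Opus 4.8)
The plan is to verify the three defining conditions (a), (b), (c) for $G$ being $C$-reducible to $G' = G - a_1 a_2$. Condition (a) is immediate since $a_1 a_2 \in E(G) - E(C)$, so deleting it is an allowed operation. For condition (b), I would use the hypothesis that there are three internally disjoint $a_1 a_2$-paths in $G - a_1 a_2$, which is exactly a Menger-type certificate that $G - a_1 a_2$ has no $2$-cut separating $a_1$ from $a_2$; combined with $3$-connectivity of $G$, this should give that $G - a_1 a_2$ is still $3$-connected. (Indeed, any $\le 2$-cut of $G - a_1 a_2$ would have to separate $a_1$ from $a_2$, since $G$ is $3$-connected; but the three disjoint paths rule that out.)

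For condition (c), let $Z'$ be any cycle in $G' = G - a_1 a_2$. If $a_1 a_2 \notin E(Z')$ — which is automatic since $a_1 a_2 \notin E(G')$ — then $Z' $ is already a cycle in $G$, so we simply take $Z = Z'$ and $|V(Z)| = |V(Z')|$. Hence (c) holds trivially: every cycle of $G'$ is a cycle of $G$ with the same vertex set. This completes the proof that $G$ is $C$-reducible to $G - a_1 a_2$.

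For the ``in particular'' clause: suppose $a_1, a_2$ are the neighbors on $C$ of some component $B$ of $G - V(C)$, and $a_1 a_2 \in E(G) - E(C)$. I need to exhibit three internally disjoint $a_1 a_2$-paths in $G - a_1 a_2$. One such path goes through $B$: since $B$ is connected and both $a_1$ and $a_2$ have a neighbor in $B$, there is an $a_1 a_2$-path whose internal vertices all lie in $B$, and this path avoids the edge $a_1 a_2$. The other two come from $3$-connectivity of $G$: in $G - a_1 a_2$ we still need two further internally disjoint $a_1 a_2$-paths disjoint from the first. Here I would apply Menger's theorem in $G$ to $a_1$ and $a_2$, or more carefully argue that since $G$ is $3$-connected, $G - a_1 a_2$ has $a_1, a_2$ joined by three internally disjoint paths, one of which may be routed through $B$ and the remaining two through $V(G) - V(B)$; along the cycle $C$ itself we get (at least) two internally disjoint $a_1 a_2$-paths avoiding $B$ and avoiding the chord $a_1 a_2$, namely the two arcs of $C$ between $a_1$ and $a_2$. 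The two arcs of $C$ together with the path through $B$ are three internally disjoint $a_1 a_2$-paths in $G - a_1 a_2$, and the first part of the lemma applies.

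The main obstacle I anticipate is the bookkeeping in the ``in particular'' clause: one must be slightly careful that the two arcs of $C$ between $a_1$ and $a_2$ are genuine paths (they are, since $C$ is a cycle and $a_1 \ne a_2$) and that the $B$-path is internally disjoint from $C$ (it is, since $B$ is a component of $G - V(C)$, so $V(B) \cap V(C) = \emptyset$). Everything else is a direct unwinding of the definition of $C$-reducibility together with Lemma~\ref{lem:sep} / Menger's theorem.
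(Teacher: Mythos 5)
Your proposal is correct and follows essentially the same approach as the paper: (a) and (c) are immediate, (b) is argued by noting that any small cut of $G - a_1 a_2$ would have to separate $a_1$ from $a_2$ (using $3$-connectivity of $G$) and the three internally disjoint $a_1 a_2$-paths rule this out, and the ``in particular'' clause uses the two arcs of $C$ together with a path through the component $B$.
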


 \begin{proof}
 Clearly (a) and (c) hold for $G'=G-a_1a_2$; we must show (b).
 Since $G$ is $3$-connected, $G'$ is $2$-connected, and if $G'$ has a
$2$-cut then $a_1$ and $a_2$ must be in different components, which
cannot happen because of the three internally disjoint $a_1a_2$-paths.

 If $a_1$ and $a_2$ are neighbors of a component $B$ of $G-V(C)$ then
there are three internally disjoint $a_1a_2$-paths in $G-a_1a_2$,
namely the two paths between $a_1$ and $a_2$ in $C$, and a path from
$a_1$ to $a_2$ through $B$.
 \end{proof}

\section{Main Result}

We are now ready to prove the main result.

\begin{theorem}  Let $G$ be a $3$-connected planar $K_{2,5}$-minor-free
graph. Then $G$ is Hamiltonian.
\label{thm:main}
\end{theorem}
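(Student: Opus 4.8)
The plan is to argue by contradiction: assume $G$ is a $3$-connected planar $K_{2,5}$-minor-free graph that is not Hamiltonian, and take $G$ to be a counterexample minimizing $|V(G)|$, or better, to take $G$ together with a longest cycle $C$ in $G$ chosen so that some reducibility measure is extremal. Since $G$ is not Hamiltonian, $C$ misses at least one vertex, so $G - V(C)$ is nonempty. The first reduction is to use Lemmas~\ref{lem:redcomp}, \ref{lem:reddeg3}, and \ref{lem:rededge} (via $C$-reducibility, which is transitive and produces $3$-connected minors with cycles no shorter than in $G'$) to simplify the structure of $G$ relative to $C$: we may assume every component $B$ of $G - V(C)$ has at least four neighbors on $C$ (otherwise Lemma~\ref{lem:redcomp} lets us contract $B$ to a degree-$3$ vertex and then Lemma~\ref{lem:reddeg3} lets us contract it onto $C$, contradicting maximality/minimality), and that there are no ``useless'' chords of $C$ incident to neighbors of such a component (Lemma~\ref{lem:rededge}). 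After these reductions, if $G - V(C)$ is empty we are done since $C$ is Hamiltonian, so we still have a nonempty component $B$ with $\geq 4$ attachments $a_1, a_2, a_3, a_4$ on $C$ in this cyclic order.

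The heart of the argument is then a planarity-plus-minor-counting contradiction. Fix the plane embedding and consider the component $B$ lying in one face-region cut out by $C$; its four (or more) attachment vertices $a_1, \dots, a_4$ divide $C$ into arcs $A_1 = C[a_1,a_2]$, $A_2 = C[a_2,a_3]$, $A_3 = C[a_3,a_4]$, $A_4 = C[a_4,a_1]$. Using $2$-connectivity of the subgraph inside $C$ (Lemma~\ref{lem:insidecycle}) and of $G - V(B)$-type subgraphs, I want to produce a $K_{2,5}$ minor: take $R_1 = B$ (connected, since $B$ is a component) as one side's branch set $\beta(a_1)$, and build the second apex branch set $R_2$ from material on the $C$-side — for instance from a vertex or short path in a region of $G$ ``outside'' the $a_i$'s, or from one of the arcs — and then the five $b_j$-vertices come from the four attachment vertices $a_1, a_2, a_3, a_4$ together with one more vertex. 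Each $a_i$ connects to $B = R_1$ by an edge, and connects to $R_2$ along an arc of $C$; the extra $b_5$ must come from an additional attachment of $B$ (if $\deg \geq 5$), or from a vertex strictly inside one of the arcs reached through a chord or through a second component, exploiting that $G$ is $3$-connected so $C$ plus its inside plus $B$ cannot be ``too simple.'' The approximate-model notation $\apm(R_1, R_2; S_1, \dots, S_5)$ from Section~2 is exactly the tool for writing this down without a fully explicit model.

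The main obstacle I expect is the case analysis needed to locate the fifth leaf $b_5$ and to guarantee the second apex $R_2$ exists disjointly from everything else: if $B$ has exactly four attachments and $C$ together with $B$ accounts for ``almost all'' of $G$, one must dig into where the remaining vertices and the $3$-connectivity force extra paths. I anticipate splitting on (i) $B$ has $\geq 5$ attachments on $C$ — here $K_{2,5}$ with $R_1 = B$ and $R_2$ a single vertex plus arcs of $C$ is immediate, contradiction; (ii) $B$ has exactly $4$ attachments but $G - V(C) - V(B) \neq \emptyset$, giving another component $B'$ whose attachments supply extra leaves or an apex; and (iii) $G - V(C) = V(B)$ with exactly four attachments, which after the chord-reductions of Lemma~\ref{lem:rededge} is very rigid and should be handled by finding a $K_{2,5}$ using a path through the interior on the $C$-side (a chord of $C$, which must exist by $3$-connectivity once $|V(C)| \geq 5$ and $B$ has only four attachments) as a fifth leaf, together with an apex drawn from a short arc. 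Throughout, Corollary~\ref{cor:P1} and Lemma~\ref{lem:rooted22} may be used to certify that pieces which avoid a rooted $K_{2,2}$ are path-outerplanar and hence spannable by the needed paths; whenever a rooted $K_{2,2}$ \emph{does} appear in a piece attached at two of the $a_i$, it combines with the other structure to yield the forbidden $K_{2,5}$.
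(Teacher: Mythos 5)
Your first reduction step has a genuine gap, and it undermines the whole plan. You claim we may assume every component of $G-V(C)$ has at least four neighbors on $C$, arguing that a $3$-attachment component can be contracted onto $C$ via Lemmas~\ref{lem:redcomp} and \ref{lem:reddeg3} ``contradicting maximality/minimality.'' But $C$-reducibility only preserves cycle \emph{lengths} (condition (c) of the definition): after contracting a $3$-attachment component onto $C$, you get a strictly smaller $3$-connected planar graph $G'$ in which $C$ may well be a Hamilton cycle, and this is no contradiction at all. The lemmas give no way to lift Hamiltonicity of $G'$ back to $G$; they only say $G$ has a cycle at least as long as any cycle of $G'$, and $|V(G')|<|V(G)|$, so a Hamilton cycle of $G'$ yields only a cycle of length $|V(C)|$ in $G$, which you already had. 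Worse, the paper's Claim~1 proves that the \emph{hard} case is exactly the one you are trying to discard: a component with four or more neighbors on $C$ is what forces a $K_{2,5}$ minor, so the component $D$ with the most neighbors has exactly three, and the bulk of the proof (Claims~3 through 6, with all the sector/jump/outerplanarity machinery) is devoted to the $k=3$ case. The paper's reduction deliberately keeps one off-cycle vertex $d$ alive (reducing to $V(G')=V(C)\cup\{d\}$) precisely so that any longer cycle found in $G'$ must pass through $d$ and hence is strictly longer than $C$; collapsing everything onto $C$, as you propose, destroys that lever.

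A secondary but real gap is your case~(i). Even granting a component $B$ with attachments $a_1,\dots,a_5$ on $C$, the $K_{2,5}$ minor is not ``immediate'': taking $R_1=B$ and singleton leaves $a_1,\dots,a_5$ requires a second apex set $R_2$, connected and disjoint from $B\cup\{a_1,\dots,a_5\}$, with a neighbor of every $a_i$. The five sectors $(a_i,a_{i+1})$ are pairwise disjoint and do not meet each other once the $a_i$ are removed, so $R_2$ must be stitched together through sector jumps or another component, which is exactly where the serious case analysis of the paper's Claim~1 begins. The tools you invoke (approximate models, Lemma~\ref{lem:insidecycle}, Lemma~\ref{lem:rooted22}, Corollary~\ref{cor:P1}) are the right ones, but the plan as written neither reaches the actually-difficult $k=3$ configuration nor closes the $k\ge 4$ configurations it does consider.
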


Theorem~\ref{thm:main} is proved by assuming $G$ is not Hamiltonian,
taking a longest cycle $C$ in $G$ and finding a contradiction with
either a longer cycle or a $K_{2,5}$ minor.

\begin{proof}
 Assume that $G$ is not Hamiltonian and assume $G$ is represented as a
plane graph.
 Let $H$ and $J$ be two subgraphs of $G$.  Let $R_0$ be the outside face
of $J$ (an open set), $R_1$ the boundary of $R_0$, and $R_2 =
\mathbb{R}^2-R_0-R_1$.  We say $H$ is \textit{outside $J$} if as subsets
of the plane we have $H
\subseteq R_0 \cup R_1$, and \textit{inside $J$} if $H \subseteq R_1 \cup
R_2$.

 Let $C$ be a longest non-Hamilton cycle in $G$. A \textit{longer cycle}
means a cycle longer than $C$.  Fix a forward direction on $C$, which we
assume is clockwise. Denote by $x^+$ the vertex directly after the
vertex $x$ on $C$ and by $x^-$ the vertex directly before $x$. Define
$C[x,y]$ to be the forward subpath of $C$ from $x$ to $y$ which includes
$x$ and $y$. If $x=y$ then $C[x,y]=\{x\}$. Define
$C(x,y)=C[x,y]-\{x,y\}$, $C(x,y]=C[x,y]-x$, and $C[x,y)=C[x,y]-y$.
Define $[x,y]$ to be $V(C[x,y])$ and $G[x,y]$ to be the induced subgraph
$G[\,[x,y]\,]$; also define $(x,y)$, $G(x,y)$, etc. similarly.
 We say a vertex $z$ is \textit{between} $x$ and $y$ if $z \in (x,y)$.

 Let $D$ be a component of $G-V(C)$ with the most neighbors on $C$.
 We fix $D$ in our arguments, and assume that $D$ is inside $C$.
 Let $u_0,u_1,\ldots ,u_{k-1}$ be the neighbors of $D$ along $C$ in
forward order. Because $G$ is $3$-connected, $k \geq 3$.
 For any distinct $u_i$ and $u_j$ there is at least one path from $u_i$
to $u_j$ through $D$; we use $u_i D u_j$ to denote such a path.
 The sets $U_i = (u_i, u_{i+1})$ (subscripts interpreted modulo $k$)
are called \textit{sectors}.
 If $U_i = \emptyset$ for some $i$, then there is a longer cycle:
replace $C[u_i,u_{i+1}]$ with $u_iDu_{i+1}$. Thus, $U_i \neq \emptyset$ for all $i$.

 A \textit{jump $x-y$} is an $xy$-path where $x \ne y$, $x, y \in V(C)$,
and no edge or internal vertex of the path belongs to $C$ or $D$.
 If $S, T \subseteq V(C)$ then a \textit{jump from $S$ to $T$} or
\textit{$S-T$ jump} is a jump $x-y$ with $x \in S$, $y \in T$; if $S=T$
we say this is a \textit{jump on $S$}.
 If $S$ is a set of consecutive vertices on $C$ then a \textit{jump out
of $S$} is a jump $x-y$ where $x \in S$, $y \notin S$, and $y$ is not
adjacent in $C$ to a vertex of $S$.
 Whenever $v, w \in V(C)$ are not equal and not consecutive on $C$ and
$(v,w)$ contains no neighbor of $D$ there is at least one jump out of
$(v,w)=[v^+,w^-]$, because $\{v,w\}$ is not a $2$-cut.

 A jump out of a sector $U_i$ is a \textit{sector jump}; since every $U_i$
is nonempty, there is a sector
jump out of every sector.
 A jump is an \textit{inside} or \textit{outside jump} if
it is respectively inside or outside $C$.
 An inside jump must have both ends in $[u_i,u_{i+1}]$ for some $i$.
 Thus, all sector jumps are outside jumps.

 If there is a jump $u_i^+-u_j^+$, then $C[u_j^+,u_i] \cup u_iDu_j \cup
C[u_i^+,u_j] \cup u_i^+-u_j^+$ is a longer cycle. Denote such a
longer cycle as $L(u_i^+-u_j^+)$. If there is a jump $u_i^--u_j^-$, then
there is a symmetric longer cycle denoted $L(u_i^--u_j^-)$. Call such
cycles \textit{standard longer cycles}.
 \whenfig{Figure \ref{fig:P18} shows $L(u_1^-, u_2^-)$ when $k=4$.}

 If $x, y \in V(C)$, $x \ne y$, $W \subseteq G-V(C)-V(D)$, and
$G[\,[x,y] \cup W\,]$ contains a $K_{2,2}$ minor rooted at $x$ and $y$,
then we say there is a \textit{$K_{2,2}$ minor along $[x,y]$}.  If there
is no such minor then for any $[x',y'] \subseteq [x,y]$ with $x' \ne y'$
there is no $K_{2,2}$ minor rooted at $x'$ and $y'$ in $G[x',y']$. 
Thus, $G[x',y']$ is $x'y'$-outerplanar by Lemma \ref{lem:rooted22} and
we may apply
 Corollary \ref{cor:P1} to $G[x',y']$.

 Suppose $a,b,c,d$ with $c \ne b$, $a \ne d$ appear in that order along
$C$. Let $W_1, W_2 \subseteq G-V(C)-V(D)$ with $W_1 \cap W_2 =
\emptyset$.
 If there is a $K_{2,2}$ minor in $G[\,[a,d] \cup W_1\,]$
 rooted at $[a,b]$ and $[c,d]$,
 represented as $\sem(R_1, R_2; s_1, s_2)$,
 and a $K_{2,2}$ minor in $G[\,[c,b] \cup W_2\,]$
 rooted at $[a,b]$ and $[c,d]$,
 represented as $\sem(R_1', R_2'; s_1', s_2')$,
and there exist $u_i \in [a,b]$ and $u_j \in [c,d]$, then
there is a $K_{2,5}$ minor $\apm(R_1 \cup R_1', R_2 \cup R_2'; s_1, s_2,
s_1', s_2', D)$ in $G$. Denote such a minor by
$M([a,b],[c,d])$.
 \whenfig{An example is shown in Figure~\ref{fig:genminor}.}

\begin{figure}
\centering
\begin{minipage}[b]{.33\linewidth}
\centering \scalebox{.95}{\includegraphics{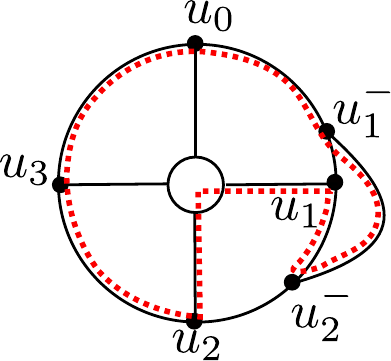}}
\captionof{figure}{\label{fig:P18}}
\end{minipage}
\begin{minipage}[b]{.33\linewidth}
\centering \scalebox{.95}{\includegraphics{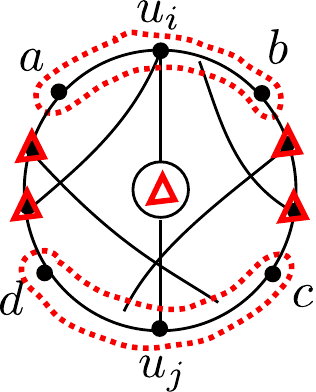}}
\captionof{figure}{\label{fig:genminor}}
\end{minipage}%
\end{figure}

 For $x \in V(C)$, define $\sigma(x) \in \{0, \hf, 1, 1\hf,
\ldots, k-\hf\}$ by $\sigma(u_i)=i$, and $\sigma(x)=i+\hf$ if
$x \in U_i$.
 Define the \textit{length} of a jump $x-y$ as
$\text{min}\{|\sigma(x)-\sigma(y)|,k-|\sigma(x)-\sigma(y)|\}$.
 A sector jump has length at least $1$.

\begin{claim} For every jump $x-y$ of length greater than $1$,
there is a sector jump $x_1-y_1$ of length $1$ with $x_1,y_1 \in [x,y]$
and another sector jump $x_2-y_2$ of length $1$ with $x_2,y_2 \in
[y,x]$.
\label{clm:jump1}
\end{claim}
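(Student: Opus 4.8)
The plan is to prove the claim by induction on the length $\ell$ of the jump $x-y$. First I would set up the geometry: the jump $x-y$ has $\ell > 1$, so between $x$ and $y$ along $C$ (in the arc $[x,y]$) there is at least one neighbor $u_i$ of $D$ that is not an endpoint, and similarly along $[y,x]$ there is at least one such neighbor $u_j$. Fix such a $u_i \in (x,y)$. The key idea is to ``split'' the original jump at $u_i$: since $u_i$ lies strictly between $x$ and $y$ on $C$, and $\{x, u_i\}$ together with $\{u_i, y\}$ are candidate $2$-cuts, I would use $3$-connectivity to produce a jump escaping from the interior of one of the arcs $(x, u_i)$ or $(u_i, y)$. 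More carefully, if $u_i$ is \emph{consecutive} on $C$ with one of $x$ or $y$ then one of the two sub-arcs is a single sector and I can look directly for a sector jump of length $1$ out of it; otherwise $(x, u_i)$ contains no neighbor of $D$ only if it consists of vertices inside sectors adjacent to $u_i$, and in any case $\{x, u_i\}$ is not a $2$-cut (as $G$ is $3$-connected), so there is a jump out of $(x, u_i)$, i.e.\ a jump with one end in $(x, u_i)$ and the other end outside $[x, u_i]$ and not consecutive with it.

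The second step is a descent argument on this new jump. A jump $x'-y'$ obtained as a ``jump out of $(x, u_i)$'' has one endpoint strictly inside the arc $[x, u_i]$ and the other endpoint outside it; I would argue that such a jump, when measured by $\sigma$, is strictly ``more localized'' than $x-y$ — precisely, it is contained in an arc $[x, y']$ or $[x', y]$ which is a proper sub-arc of $[x, y]$ still containing a neighbor of $D$ in its interior, so its length is positive and bounded by something smaller. Two things can happen. If the new jump already has length exactly $1$, it is a sector jump of length $1$ lying inside $[x, y]$, and after possibly repeating the argument symmetrically on the $[u_i, y]$ side (or using the $u_j$ on the $[y,x]$ side) we obtain both required sector jumps. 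If the new jump has length still greater than $1$, I apply the inductive hypothesis to it, which hands back sector jumps of length $1$ inside the two sub-arcs it spans; since those sub-arcs are contained in $[x, y]$ (respectively $[y, x]$), we are done. The symmetric statement for $[y, x]$ is obtained by reversing the roles of the two arcs of $C$ determined by $x$ and $y$, using the neighbor $u_j \in (y, x)$; I would phrase the inductive statement so that it produces \emph{one} sector jump of length $1$ in a designated half and invoke it twice.

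The main obstacle I anticipate is the bookkeeping needed to guarantee that the ``jump out of $(x, u_i)$'' really does stay inside the closed arc $[x, y]$ on the correct side and genuinely has smaller length, rather than, say, having its far endpoint wrap around past $y$. This requires a careful case split according to whether the far endpoint of the new jump lands in $[x, u_i]$ itself (impossible, since it is a jump \emph{out} of $(x, u_i)$), lands in $(u_i, y]$ (fine — the new jump is then inside $[x, y]$ and strictly shorter), or lands outside $[x, y]$ altogether. In the last case I would instead measure against the \emph{other} side and observe that then the original jump $x-y$, together with the new jump, still forces a sector jump of length $1$ inside $[x,y]$ by considering the neighbor $u_i$ and a minimal-length jump crossing into the interior; alternatively one restarts with a jump of strictly smaller length obtained by truncating at $u_i$. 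I would also want to note the base case: when $\ell$ is as small as possible subject to $\ell > 1$ (so $\ell$ is just above $1$, e.g.\ $\sigma$-difference $3/2$), the interior of $[x,y]$ contains exactly one neighbor $u_i$ of $D$, the sub-arcs $(x,u_i)$ and $(u_i,y)$ are each contained in a single sector, and a jump out of such a sub-arc is directly a sector jump of length $1$ by the length computation — this anchors the induction without circularity.
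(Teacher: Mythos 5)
Your overall plan (descend to shorter and shorter jumps until you hit one of length~$1$) is the same idea the paper uses, but several of the technical steps in your write-up would not survive contact with the details, and at least one of them is stated incorrectly.

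First, your base case is wrong. When $\ell = 3/2$ you correctly observe that $(x,u_i)$ and $(u_i,y)$ each lie inside a single sector, but a jump out of such a sub-arc is emphatically \emph{not} ``directly a sector jump of length $1$'': its far end can be anywhere not adjacent to the sub-arc on $C$, so its length can be anything from~$1$ up to~$\lfloor k/2 \rfloor$. Second, you say you will induct on the \emph{length} $\ell$, but length is defined as $\min\{|\sigma(x)-\sigma(y)|, k-|\sigma(x)-\sigma(y)|\}$; when you replace a jump by another jump with both endpoints strictly inside $[x,y]$, the quantity $|\sigma(x')-\sigma(y')|$ goes down, but the cyclic minimum need not, because the minimum might switch to the other arc. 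The paper avoids this by first fixing a side and descending on the \emph{linear length} $|\sigma(x')-\sigma(y')|$, checking at the end that linear length $1$ forces length $1$. Third, and most substantively, the ``wrap around'' case that you flag in your last paragraph is exactly the crux, and your proposed resolutions (``measure against the other side,'' ``restart by truncating at $u_i$'') are not arguments. The paper's fix is structural: since linear length $>1$, there is a whole sector $U_j$ contained in $(x',y')$; a sector jump out of $U_j$ is necessarily an \emph{outside} jump, as is $x'\!-\!y'$ itself, so by planarity it either has both endpoints in $[x',y']$ (hence strictly smaller linear length), or it meets $x'\!-\!y'$ at an interior vertex, in which case you reroute to obtain two jumps $x''\!-\!x'$ and $x''\!-\!y'$, at least one of which is a sector jump of strictly smaller linear length. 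Your version, jumping out of $(x,u_i)$ rather than a full sector, loses this planarity control (the jump need not be an outside jump, and you also need $u_i$ to be the $u$-neighbor closest to $x$ so that $(x,u_i)$ contains no neighbor of $D$, otherwise the $2$-cut argument for the existence of the jump breaks). So the proposal is pointed in the right direction but the descent mechanism and the base case both need to be replaced.
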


 For any jump $u-v$, define the \textit{linear length} as
$|\sigma(u)-\sigma(v)|$.
 We claim that for any jump (not necessarily a sector jump) $x'-y'$ of
linear length $\ell'>1$ with $\sigma(x') < \sigma(y')$, there is a
sector jump $x''-y''$ of linear length less than $\ell'$ with $x'', y''
\in [x', y']$.  The jump $x'-y'$ must be outside $C$, and there is a
sector $U_j \subset (x',y')$. Let $x''-y''$ be any jump
out of $U_j$; then $\sigma(x') < \sigma(x'') < \sigma(y')$. If $x''-y''$
does not contain an interior vertex of $x'-y'$, then by
planarity $x''-y''$ has linear length less than $\ell'$.
 If $x''-y''$ contains an interior vertex of $x'-y'$, then we have jumps
$x''-x'$ and $x''-y'$ with linear length less than $\ell'$, at least one
of which is a sector jump.
 We may repeat this process until we reach a sector jump $x^*-y^*$ with
$x^*,y^* \in [x',y']$ of linear length $1$, and hence also length $1$.

If we relabel $u_0,u_1,\ldots,u_{k-1}$ keeping the same cyclic order so
that $x \in \{u_0\} \cup U_0$ and repeatedly apply the previous
paragraph beginning with the jump $x-y$, we obtain the required jump
$x_1-y_1$. Similarly, relabeling so that $y \in \{u_0\} \cup U_0$ yields
the jump $x_2-y_2$. This completes the proof of Claim~\ref{clm:jump1}.

 \medskip

 \begin{claim} $k = 3$.
 \label{clm:1}
 \end{claim}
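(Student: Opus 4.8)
The plan is a proof by contradiction: assume $k \ge 4$ and produce either a cycle longer than $C$ or a $K_{2,5}$ minor. The data we start from is that $3$-connectivity gives a sector jump out of every $U_i$, that all sector jumps lie outside $C$, and that (since $C$ is longest) there is no standard longer cycle, so in particular no jump $u_i^+-u_j^+$ or $u_i^--u_j^-$ with $i\ne j$.

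The workhorse is the $K_{2,5}$ minor $M([a,b],[c,d])$ set up in the preliminaries. It requires two arcs $[a,b],[c,d]$ of $C$ in cyclic order, each containing a neighbour of $D$, together with a $K_{2,2}$ minor rooted at $[a,b]$ and $[c,d]$ on each side. Its five connections are: the path through $D$ (using a $D$-neighbour in each arc), the arc of $C$ lying strictly between $[a,b]$ and $[c,d]$ on each side (two paths), and, on each side, a sector jump running from the $[a,b]$-arc to the $[c,d]$-arc (two paths). So the whole task reduces to choosing $[a,b]$ and $[c,d]$ so that on each side some sector jump crosses from one arc to the other; then $M([a,b],[c,d])$ is a forbidden minor.

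Concretely: suppose first (after relabelling) that a jump out of $U_0$ reaches into $[u_2,u_3]$ and a jump out of $U_2$ reaches into $[u_0,u_1]$. Then with $[a,b]=[u_0,u_1]$ and $[c,d]=[u_2,u_3]$ these two jumps are exactly the side jumps needed, the arcs $C[u_1,u_2]$ and $C[u_3,u_0]$ complete the two rooted $K_{2,2}$ minors, and a path through $D$ between $u_0$ and $u_2$ gives the fifth connection, yielding the forbidden minor $M([u_0,u_1],[u_2,u_3])$. Running this over all cyclic choices of a roughly antipodal pair of arcs $[u_i,u_{i+1}],[u_{i+m},u_{i+m+1}]$, together with a short additional argument, reduces us to the case where every sector jump is \emph{local}, landing in an adjacent sector $U_{i\pm 1}$ and drawn `the short way' (covering a single $u_j$). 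In that case I would use the endpoints of a local jump $x-y$ with $x\in U_0$, $y\in U_1$ to set $[a,b]=[u_0,x]$ and $[c,d]=[y,u_2]$, making $x-y$ itself the side jump on the side through $u_1$; the side jump on the other side would have to be a second jump from $U_0$ to $U_1$ drawn the long way around $C$, which planarity permits alongside $x-y$, and if one exists we again finish via $M$. When no such second jump exists the local picture is rigid (each $u_j$ flanked by a short jump covering it), and the contradiction should come either from re-inserting some $u_j$ through its edge to $D$ to build a standard longer cycle $L(\cdot)$ or from the interaction of the short jumps over two consecutive $u_j$'s.

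The step I expect to be hardest is precisely this all-local case, together with the general bookkeeping: checking that the five connections are internally disjoint with no internal vertex in $[a,b]\cup[c,d]$ — in particular handling the possibility that the two side jumps pass through the same component of $G-V(C)-V(D)$, where one must reroute two disjoint paths inside that component — and, on the longer-cycle side, making sure a candidate detour actually gains vertices rather than swapping one path for another of equal length, which forces separate attention to sector jumps that are single edges. Planarity is the feature that keeps everything finite: two outside jumps cannot cross, so the cyclic order of the relevant jump endpoints is tightly constrained, and the proof is essentially an organized check that every such arrangement yields an $L(\cdot)$ or an $M(\cdot,\cdot)$.
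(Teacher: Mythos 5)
Your high-level plan (assume $k\ge 4$, then look for either a longer cycle or an $M([a,b],[c,d])$-type $K_{2,5}$ minor, and reduce by planarity to a rigid local configuration) is the right strategy and matches the paper's framework, but the sketch has a genuine gap exactly where you flag it: the all-local case. That case is not dispatched by ``re-inserting some $u_j$ to build a standard longer cycle $L(\cdot)$'' or by ``the interaction of the short jumps over two consecutive $u_j$'s.'' The paper first pins the situation down much more precisely than ``every sector jump is local'' — it shows every component of $G-V(C)$ other than $D$ has neighbours in at most two sectors, then that there are exactly two pairs of sectors joined by length-$1$ jumps, hence $k=4$ with, say, $U_3$--$U_0$ and $U_1$--$U_2$ jumps but no $U_0$--$U_1$ or $U_2$--$U_3$ jumps — and then the real engine is the dichotomy from Lemma~\ref{lem:rooted22}: one shows there is a $K_{2,2}$ minor rooted along $[u_3,u_1]$ and another along $[u_1,u_3]$ (giving $M(u_3,u_1)$), and the way this is done is to \emph{assume} no such rooted $K_{2,2}$ minor and deduce that the relevant region $H_0'$ is $z_3z_0$-outerplanar, at which point Lemma~\ref{lem:P0} / Corollary~\ref{cor:P1} manufacture a Hamilton path in that region ending at a forced vertex, which is stitched into a longer cycle. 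Your sketch has no mechanism that produces a longer cycle from a region with no rooted $K_{2,2}$ minor; that mechanism is the essential idea, and without it the local case does not close.

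There is also a smaller but real problem in your opening move. If a jump out of $U_0$ lands in $[u_2,u_3]$ and a jump out of $U_2$ lands in $[u_0,u_1]$, then for most placements of the endpoints these two outside jumps interleave on $C$ and hence must cross (they cannot, being both outside $C$), so the two side jumps you need for $M([u_0,u_1],[u_2,u_3])$ do not exist as disjoint paths. You acknowledge a related issue (``two side jumps pass through the same component \ldots reroute two disjoint paths''), but that is not the right fix: crossing is forced combinatorially, and the correct move in the paper is to replace one long jump by a shorter sector jump nested inside it (Claim~\ref{clm:jump1}) and to rule out a component $D'$ meeting three sectors by a direct $K_{2,5}$ minor. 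Those two structural reductions — ``shorten long jumps'' and ``no $D'$ meets three sectors'' — are what make the later case analysis finite, and they need to be stated and proved, not absorbed into bookkeeping.
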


 Assume that $k \geq 4$.
 Suppose there is a component $D'$ of $G-V(C)$ with neighbors in
three consecutive sectors, say $\att_1 \in U_0$, $\att_2 \in U_1$, and
$\att_3 \in U_2$ ($D'$ may also have neighbors in other sectors). Then
since $k \geq 4$, $\att_1-\att_3$ is a jump of length greater than $1$.
Therefore by Claim~\ref{clm:jump1}, there is a sector jump $x-y$ of
length $1$ with $u_i \in [x,y] \subseteq [\att_3, \att_1]$.  At most one
of $x \in U_2$, $y \in U_0$ is true; we may assume that $y \notin U_0$.
 Then there is a $K_{2,5}$ minor
 $\apm(D \cup \set{u_1}, D' \cup [\att_3, x] \cup x-y;
	u_0, \att_1, \att_2, u_2, u_i)$%
  \whenfig{ as shown in Figure~\ref{fig:P4}}.
 This minor applies even if $x-y$ intersects $D'$.

 Now suppose there is a component $D'$ of $G-V(C)$ with neighbors in three
sectors that are not consecutive (this requires $k \ge 5$; again $D'$
may also have neighbors in other sectors).
 We may assume that these are $\att_1 \in U_h$, $\att_2 \in U_i$,
$\att_3 \in U_j$ in order along $C$, where $U_h, U_i$ may be consecutive
but $U_i, U_j$ and $U_j, U_h$ are not.
 Then there is a $K_{2,5}$-minor
 $\apm(D \cup \set{u_{i+1}}, D' \cup \set{\att_1, \att_3};
	u_h, u_i, \att_2, u_j, u_{j+1})$.
 \whenfig{An example with $(h,i,j)=(k-1, 0, 2)$ is shown in
Figure~\ref{fig:P1-2}.
 }

 Hence, every component of $G-V(C)$ other than $D$ has neighbors in at
most two sectors. Therefore, a sector jump of length $1$, from $U_{i-1}$
to $U_i$, cannot intersect any sector jump with an end in $U_j$, $j
\notin \set{i-1, i}$, which includes all sector jumps of length at least
$2$.

\begin{figure}
\centering
\begin{minipage}[b]{.33\linewidth}
\centering \scalebox{.95}{\includegraphics{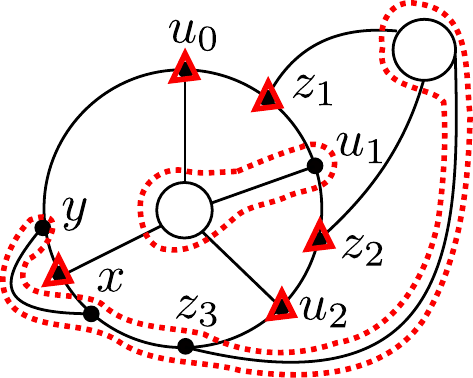}}
\captionof{figure}{\label{fig:P4}}
\end{minipage}%
\begin{minipage}[b]{.33\linewidth}
\centering \scalebox{.9}{\includegraphics{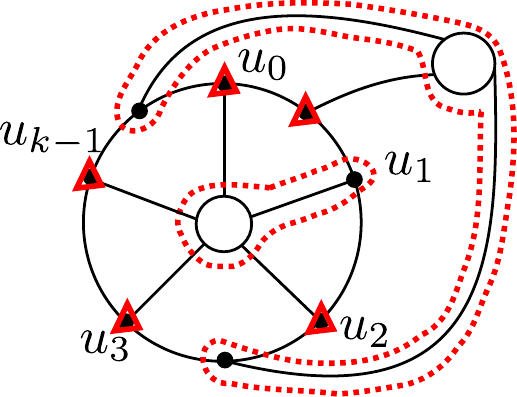}}
\captionof{figure}{\label{fig:P1-2}}
\end{minipage}%
\end{figure}

 From Claim \ref{clm:jump1} it follows that there are at least two
distinct pairs of sectors with jumps of length $1$ between them.
 Suppose there are three distinct pairs of sectors with jumps of length
$1$ between them, say $x_1-y_1$, $x_2-y_2$ and $x_3-y_3$
in order along $C$, where $u_g \in (x_1,y_1)$, $u_h \in (x_2,y_2)$ and $u_i
\in (x_3,y_3)$.  Since $k \ge 4$, we may assume there is some $u_j \in
(y_3,x_1)$.
 Then there is a $K_{2,5}$ minor
 $\apm(D \cup \set{u_j}, [y_1,x_2] \cup x_2-y_2 \cup [y_2,x_3];
	x_1, u_g, u_h, u_i, y_3)$.
 \whenfig{An example with $(g,h,i)=(0,1,2)$ is shown in
Figure~\ref{fig:P10}. }

 Therefore, we may assume that there are exactly two distinct pairs of
sectors with jumps of length $1$ between them, say $x_1-y_1$ and
$y_2-x_2$ in order along $C$, where $u_g \in (x_1, y_1)$ and $u_h \in
(y_2, x_2)$.
 Suppose some sector has no jump of length $1$ out of it.  Without loss
of generality we may assume this sector is $U_0 \subseteq (x_2, x_1)$. 
There is some sector jump $x-y$ out of $U_0$.  Then $y \in [y_1, y_2]$,
otherwise Claim \ref{clm:jump1} would give a jump of length $1$ between
a third pair of sectors.  Therefore there is a $K_{2,5}$ minor
 $\apm(D \cup \set{u_0, u_1}, [y_1,y_2] ;
	x, x_1, u_g, u_h, x_2)$%
 \whenfig{ as shown in Figure~\ref{fig:P6}}.

 Therefore, every sector has a jump of length $1$ out of it, which means
that $k=4$, and we may assume that there are jumps $U_3-U_0$ and
$U_1-U_2$, but no jumps $U_0-U_1$ or $U_2-U_3$.
 Let $z_3-z_0$ be the
sector jump $U_3-U_0$ such that $z_3$ is closest to $u_3$ and $z_0$ is
closest to $u_1$. Similarly, let $z_1-z_2$ be the sector jump $U_1-U_2$
such that $z_1$ is closest to $u_1$ and $z_2$ is closest to $u_3$.
 Each $U_j$ is divided into two parts by $z_j$: let $A_0=(u_0,z_0)$,
$B_0=(z_0,u_1)$, $B_1=(u_1,z_1)$, $A_1=(z_1,u_2)$, $A_2=(u_2,z_2)$,
$B_2=(z_2,u_3)$, $B_3=(u_3,z_3)$ and $A_3=(z_3,u_0)$.

 We may assume that $z_3-z_0$ and $z_1-z_2$ are embedded in the plane so
that $D$ is outside both cycles $Z_0=C[z_3,z_0] \cup z_3-z_0$ and
$Z_2 = C[z_1, z_2] \cup z_1-z_2$.
 Let $H_0$ be the subgraph of $G$ consisting of $Z_0$ and all
vertices and edges inside $Z_0$, and define $H_2$ similarly; these are
$2$-connected by Lemma \ref{lem:insidecycle}.

 For any $j$, define $N_j$ to be the set of vertices of $V(G)-V(C)-V(D)$
inside a cycle $C[u_j, u_{j+1}] \cup u_{j+1} D u_j$ (the exact path
through $D$ does not matter).  Loosely, these are the vertices inside $C$
associated with the sector $U_j$.
 We now claim that there is a $K_{2,2}$ minor along $[u_3, u_1]$ using
only vertices in $[u_3, u_1] \cup V(H_0) \cup N_3 \cup N_0$.

 
 If $N_3 \ne \emptyset$, then there is a component $D'$ of $G-V(C)$ with
$V(D') \subseteq N_3$.  Now $D'$ has (at least) three neighbors in
$[u_3,u_0]$, say $w_1, w_2, w_3$ in order along $C$.
 So $\apm([u_3, w_1], [w_3, u_1] ; w_2, D')$ is the required $K_{2,2}$
minor.
 Thus, we may assume that $N_3 = \emptyset$, and symmetrically that $N_0
= \emptyset$.
 
\begin{figure}
\centering
\begin{minipage}[b]{.33\linewidth}
\centering \scalebox{.95}{\includegraphics{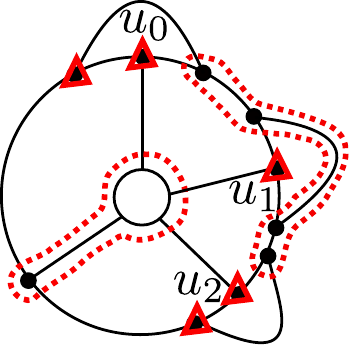}}
\captionof{figure}{\label{fig:P10}}
\end{minipage}%
\begin{minipage}[b]{.33\linewidth}
\centering \scalebox{.95}{\includegraphics{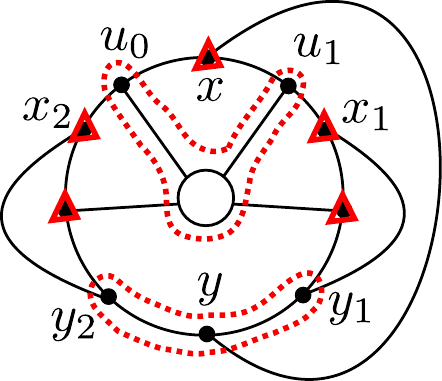}}
\captionof{figure}{\label{fig:P6}}
\end{minipage}%
\begin{minipage}[b]{.33\linewidth}
\centering \scalebox{.95}{\includegraphics{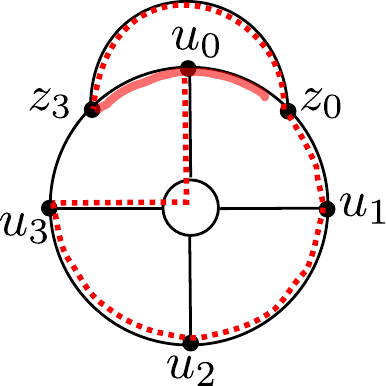}}
\captionof{figure}{\label{fig:P22}}
\end{minipage}%
\end{figure}

 Let $H_0' = H_0 \cup G[z_3, z_0]$.
 Then $V(H_0') = V(H_0)$, so $H_0'$ is also $2$-connected, but possibly
$E(H_0') \ne E(H_0)$ because $H_0'$ contains any edges inside $C$
joining two vertices of $[z_3,u_0]$ or two vertices of $[u_0,z_0]$.
 If $H_0'$ has a $K_{2,2}$ minor rooted at $z_3$ and $z_0$, such as a
minor $\apm(z_3, z_0; u_0, q)$ when $z_3-z_0$ has an internal vertex
$q$, then we can extend this minor using $[u_3,z_3]$ and $[z_0,u_1]$ to
get the required $K_{2,2}$ minor. 
 If there is an inside jump out of any of $B_3, A_3, A_0, B_0$, then
this jump together with $z_3-z_0$ forms the required $K_{2,2}$ minor.


 So we may assume that $H_0'$ has no $K_{2,2}$ minor rooted at $z_3$ and
$z_0$.  Thus, $z_3-z_0$ has no internal vertex and so $z_3 z_0$ is an
outer edge of $H_0'$.
 Also, by Lemma \ref{lem:rooted22}, $H_0'$ is $z_3z_0$-outerplanar. 
 If there is an edge of $G$ leaving $H_0'$ at a vertex of $A_3$ or $A_0$
then, since $N_3=N_0=\emptyset$, that edge is an inside jump, creating
the required $K_{2,2}$ minor.
 Hence, any edges of $G$ leaving $H_0'$ leave at $z_3$, $u_0$ or $z_0$.
 Since $G$ is $3$-connected these are the only vertices that can have
degree $2$ in $H_0'$.

 Suppose that $B_3 = \emptyset$.  By Lemma \ref{lem:P0} there is a
Hamilton path $P = z_0 z_3 \ldots t$ in $H_0'$ where $t$ has degree $2$
in $H_0'$; then we must have $t = u_0$.  Thus, $P \cup C[z_0,u_3] \cup
u_3Du_0$ is a longer cycle, a contradiction.
 This cycle is shown in Figure \ref{fig:P22}, where we use the convention
that paths found using Lemma \ref{lem:P0} or Corollary
 \ref{cor:P1} are shown by heavily shading the part of the graph
covered by the paths; the rest of the cycle is shown using dotted
curves.
 Thus, $B_3$ is nonempty, and by a symmetric argument $B_0$ is also
nonempty.

 Suppose $r_0-t$ is an outside jump out of $B_0$.
 This jump cannot contain an internal
vertex of $z_3-z_0$, and $t \notin (u_3, z_3]$, by choice of $z_3-z_0$.
 The jump cannot contain an internal vertex of $z_1-z_2$, and 
$t \notin (u_1,z_1]$, because there are no $U_0-U_1$ jumps.
 Thus, $t \in [z_2, u_3]$.
 Similarly, an outside jump $r_3-t'$ out of $B_3$ must have $t' \in [u_1,z_1]$. 
 Hence we cannot have outside jumps out of both $B_0$ and $B_3$ because
the jumps $r_0-[z_2,u_3]$ and $r_3-[u_1,z_1]$ would intersect by
planarity, giving a jump $r_3-r_0$ that contradicts the choice of
$z_3-z_0$.  Therefore, there is an inside jump out of one of $B_0$ or
$B_3$, giving the required $K_{2,2}$ minor along $[u_3,u_1]$.

 By a symmetric argument there is also a $K_{2,2}$ minor along
$[u_1,u_3]$ using only vertices in $[u_1, u_3] \cup V(H_2) \cup N_1 \cup
N_2$.  The two minors intersect only at $u_1$ and $u_3$, so together
they give a $K_{2,5}$ minor $M(u_3, u_1)$.
 This concludes the proof of Claim \ref{clm:1}.

 \medskip
Henceforth we assume $k=3$. The next claim simplifies the structure of
the graph we are looking at and makes further analysis easier. 

 \begin{claim} Without loss of generality, we may assume
 that $D$ consists of a single degree $3$ vertex $d$ and that $V(G) = V(C)
\cup \{d\}$.  Thus, every jump is a single edge.
 We may also assume that there are no edges $xy \in E(G)-E(C)$ where $G$
has three internally disjoint $xy$-paths of length $2$ or more; in
particular $u_i u_j \notin E(G)$ for all $i, j  \in \set{0, 1, 2,
\ldots, k-1}$.
 \label{clm:3}
 \end{claim}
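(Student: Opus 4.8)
The plan is to reach the claimed structure by repeatedly applying the $C$-reducibility machinery of Section~2. Recall that if $G$ is $C$-reducible to $G'$, then $G'$ is a $3$-connected planar minor of $G$, $C$ is still a cycle of $G'$, every cycle of $G'$ is no longer than some cycle of $G$ (hence no longer than $C$), and $C$-reducibility is transitive. Consequently, if the final graph $G^*$ of a sequence of $C$-reductions still has a vertex off $C$, then $G^*$ is $3$-connected, planar, $K_{2,5}$-minor-free and non-Hamiltonian, with $C$ a longest non-Hamilton cycle; moreover any longer cycle or any $K_{2,5}$ minor found in $G^*$ immediately yields one in $G$. So it suffices to derive the eventual contradiction for such a $G^*$, which is exactly what ``without loss of generality'' means here.

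First I would reduce the vertex set. Since $k=3$, Lemma~\ref{lem:redcomp} lets us $C$-reduce $G$ by contracting $E(D)$, turning $D$ into a single degree-$3$ vertex $d$ with neighbours $u_0,u_1,u_2$. Every component $B$ of $G-V(C)$ has at least three neighbours on $C$ by $3$-connectivity and at most three by the maximality of $D$ (note that after contracting $E(D)$ its image $d$ still has exactly three neighbours, so this bound is not lost), so Lemma~\ref{lem:redcomp} may be applied to each such $B$ in turn; afterwards every vertex off $C$ has degree $3$ with all three neighbours on $C$. Now, while some vertex $b\ne d$ lies off $C$, it has degree $3$, so by Lemma~\ref{lem:reddeg3} there is an edge $bc$ with $G$ $C$-reducible to $G/bc$; since all neighbours of $b$ lie on $C$ we have $c\in V(C)$, so this contraction strictly decreases the number of vertices off $C$. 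Iterating, we reach a graph with $V(G)=V(C)\cup\{d\}$ and $d$ of degree $3$. In this graph there are no vertices outside $C\cup D$, so a jump has no internal vertices at all and is therefore a single edge, as claimed; and since $d$ has stayed off $C$ throughout, $C$ is never Hamiltonian in the reduced graph.

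Next I would reduce the edge set. While there is an edge $xy\in E(G)-E(C)$ admitting three internally disjoint $xy$-paths of length at least $2$ — such paths automatically avoid the edge $xy$, so Lemma~\ref{lem:rededge} applies — we $C$-reduce $G$ to $G-xy$. This step cannot delete an edge at $d$: since $\deg(d)=3$, at most two internally disjoint paths of length $\ge 2$ can join $d$ to any one of its neighbours. Hence $d$, its degree, and the labels $u_0,u_1,u_2$ all survive, and since we never delete edges of $C$ we still have $V(G)=V(C)\cup\{d\}$. When the process halts, no such edge remains. Finally, for $i\ne j$ the vertices $u_i,u_j$ are non-consecutive on $C$ (the sector between them is nonempty), so an edge $u_iu_j$ would lie in $E(G)-E(C)$, and the two arcs of $C$ between $u_i$ and $u_j$ together with $u_idu_j$ would be three internally disjoint $u_iu_j$-paths of length $\ge 2$; such an edge would have been deleted, so $u_iu_j\notin E(G)$.

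There is no single hard step here; the points that need care are the bookkeeping invariants: that every component other than $D$ really has exactly three neighbours on $C$ (needing both $3$-connectivity and the maximality of $D$, plus the observation that contracting $D$ does not raise the maximum), that the contractions in the vertex-reduction always merge an off-$C$ vertex into a vertex of $C$ and so terminate, that the edge-deletion step never touches $d$, and that $C$ remains a longest non-Hamilton cycle of the reduced graph at every stage. Each of these is routine, but all must be verified for the reduction to be legitimate.
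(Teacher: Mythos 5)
Your proposal is correct and takes essentially the same approach as the paper: applying Lemma~\ref{lem:redcomp}, then Lemma~\ref{lem:reddeg3}, then Lemma~\ref{lem:rededge}, and citing transitivity of $C$-reducibility to justify replacing $G$ by the reduced graph. The extra invariants you verify (that edges at $d$ are never eligible for deletion, that each degree-$3$ contraction moves a vertex onto $C$ and hence terminates, that $d$ stays off $C$ so $C$ remains non-Hamiltonian) are correct and slightly more explicit than the paper's statement of the same argument, but the substance is identical.
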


 Since $k=3$ and $G$ is $3$-connected, every component of $G-V(C)$ has
exactly three neighbors on $C$.
 Applying Lemma \ref{lem:redcomp} to each of these components in turn,
including $D$, we find that $G$ is $C$-reducible to $G_1$ for which
every component of $G_1-V(C)$ is a single degree $3$ vertex of $G_1$. 
Let $d$ be the degree $3$ vertex corresponding to $D$.
 Applying Lemma \ref{lem:reddeg3} to each vertex of
$V(G_1)-V(C)-\set{d}$ in turn, we find that $G_1$ is $C$-reducible to
$G_2$ for which $V(G_2) = V(C) \cup \{d\}$.
 Starting from $G_2$ and applying Lemma \ref{lem:rededge} repeatedly to
any edge $xy$ not on $C$ where there are three internally disjoint
$xy$-paths of length $2$ or more, we find that $G_2$ is $C$-reducible to
$G_3$ in which there are no such edges $xy$.  Since $u_iu_j \notin E(C)$
for all $i$ and $j$, $G_3$ has no edges $u_i u_j$ by the second part of
Lemma \ref{lem:rededge} .

 Since $C$-reducibility is transitive, $G$ is $C$-reducible to $G_3$. 
$G_3$ is $3$-connected and has all the properties stated in the claim. 
Since $G_3$ is a minor of $G$, $G_3$ is planar, and showing that $G_3$
has a $K_{2,5}$ minor also shows that $G$ has a $K_{2,5}$ minor.  By (c)
of the definition of $C$-reducibility, showing that $G_3$ has a cycle
longer than $C$ also shows that $G$ has a cycle longer than $C$. 
Therefore, we may replace $G$ by $G_3$ in our arguments.  This concludes
the proof of Claim \ref{clm:3}.

 \medskip

 We are now in the general situation
where there are three sectors labeled $U_0$, $U_1$, and $U_2$. Let
$t_0-t_1$ be the outermost $U_2-U_0$ jump (if any $U_2-U_0$ jump
exists), meaning that $t_0 \in U_2$ is closest to $u_2$ and $t_1 \in
U_0$ is closest to $u_1$. Similarly let $t_2-t_3$ be the outermost
$U_0-U_1$ jump, and $t_4-t_5$ the outermost $U_1-U_2$ jump, when such
jumps exist.
 Because every sector must have a jump out
of it and by Claim \ref{clm:jump1}, there are at least two sector jumps
of length $1$; without loss of generality, assume there are
jumps $t_0-t_1$ and $t_2-t_3$. Define
$X_0=(t_0,u_0)$, $X_1=(u_0,t_1)$, $X_2=(t_2,u_1)$, $X_3=(u_1,t_3)$,
$X_4=(t_4,u_2)$, and $X_5=(u_2,t_5)$, whenever the necessary $t_i$
vertices exist.
 \whenfig{An example of the overall situation is shown in
Figure~\ref{fig:genform}.}

\begin{figure}
\centering
\begin{minipage}[b]{.33\linewidth}
\centering \scalebox{.95}{\includegraphics{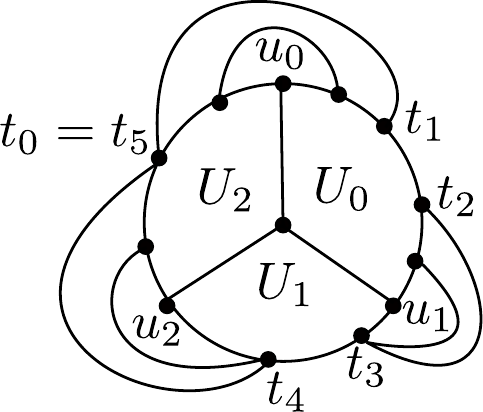}}
\captionof{figure}{\label{fig:genform}}
\end{minipage}%
\begin{minipage}[b]{.33\linewidth}
\centering \scalebox{.95}{\includegraphics{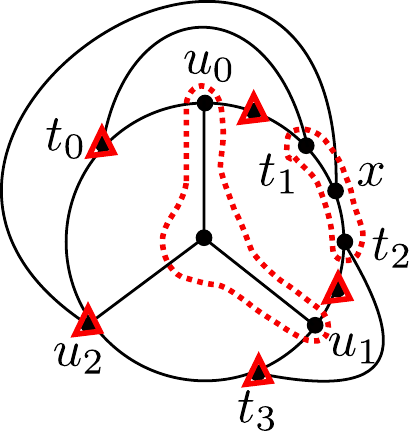}}
\captionof{figure}{\label{fig:C1}}
\end{minipage}%
\begin{minipage}[b]{.33\linewidth}
\centering \scalebox{.95}{\includegraphics{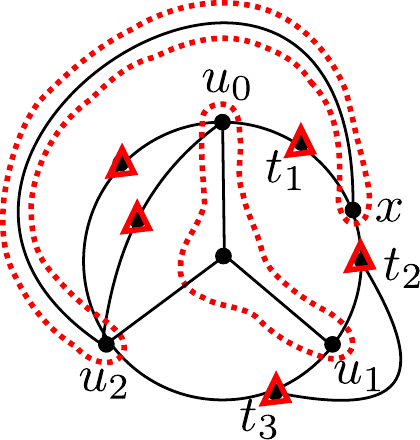}}
\captionof{figure}{\label{fig:C2}}
\end{minipage}%
\end{figure}

\begin{claim} There are no sector jumps $x-u_2$ where $x \in (t_1,t_2)$.
\label{clm:u2}
\end{claim}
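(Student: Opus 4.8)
The plan is to suppose, for a contradiction, that a jump $x-u_2$ with $x\in(t_1,t_2)$ exists, and to produce a $K_{2,5}$ minor. Since $V(G)=V(C)\cup\{d\}$, any cycle longer than $C$ would be a Hamilton cycle of $G$; so although ``a longer cycle'' remains a legitimate contradiction, the contradiction I expect here is a $K_{2,5}$ minor. One fact I would record at the outset, for use in the harder cases, is that $x-u_2$ is an outside jump whose two arcs of $C$ are $C[u_2,x]$ through $u_0$ (containing $t_0,t_1$) and $C[x,u_2]$ through $u_1$ (containing $t_2,t_3$), since $t_1<x<t_2$; hence nothing can cross $x-u_2$, so every outside jump has both ends on the same one of these arcs, and in particular the jump $t_0-t_1$ lies on the $u_0$ side and the jump $t_2-t_3$ on the $u_1$ side. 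Also, $d$ is adjacent to all of $u_0,u_1,u_2$, so it can be absorbed into one hub of the minor while still contributing edges to the other two corners.

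For the core construction I would take one hub to be $A_1=\{d,u_2,x\}$, which is connected via the edge $du_2$ and the jump $xu_2$ and which has edges to $u_0$ and $u_1$ through $d$, to $u_2^-$ and $u_2^+$ along $C$, and to $x^-$ and $x^+$ along $C$. For the other hub I would take $A_2=[u_1^+,t_4]\cup[t_5,u_0^-]$, joined by the $U_1-U_2$ jump $t_4-t_5$ when one exists, and lying inside $U_1\cup U_2$. The five spokes are then $\{u_0\}$ and $\{u_1\}$ (each adjacent to $d\in A_1$, and to $A_2$ along $C$ at $u_0^-$ resp.\ $u_1^+$); the interval $[t_1,x^-]$ of $U_0$ (adjacent to $A_1$ at $x^-$, and to $A_2$ through the jump $t_0-t_1$ since $t_0\in[t_5,u_0^-]$); and the subintervals $X_4=(t_4,u_2)$ and $X_5=(u_2,t_5)$ (each adjacent to $A_1$ at $u_2^-$ resp.\ $u_2^+$, and to $A_2$ along $C$). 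One checks that these seven sets are pairwise disjoint and that all ten required adjacencies hold, giving the $K_{2,5}$ minor
\[
 \apm(\{d,u_2,x\},\ [u_1^+,t_4]\cup[t_5,u_0^-]\,;\ u_0,\,u_1,\,[t_1,x^-],\,X_4,\,X_5),
\]
a contradiction. This works verbatim whenever a $U_1-U_2$ jump exists and $X_4,X_5$ are nonempty.

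The remaining work, which I expect to be the main obstacle, is the case analysis when these conditions fail. If $X_4$ or $X_5$ is empty, one replaces the corresponding spoke by the relevant $t_i$ vertex or its $C$-neighbour and re-cuts $A_2$ to match. If no $U_1-U_2$ jump exists, one instead routes $A_2$ through $U_0$, joining an arc near $u_1$ to an arc near $u_0$ by the jump $t_0-t_1$ on one side and the jump $t_2-t_3$ (or some jump incident to $(x,t_2)$, which exists since $G$ is $3$-connected and whose other end is confined by the planarity observation above) on the other, possibly after interchanging the roles of $u_0$ and $u_1$. In every case the target is the same --- five pairwise-disjoint connected branch sets each joining the two hubs --- and the delicate point is keeping the second hub both connected and disjoint from all five spokes while it stays adjacent to each of them; this is precisely the bookkeeping that the extra jumps forced by $3$-connectivity, together with the fact that none of them can cross the three distinguished jumps $t_0-t_1$, $t_2-t_3$, $x-u_2$, are there to control.
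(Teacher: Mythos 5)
Your proof has a fatal flaw right at the start of the ``core construction'': the case to which your main minor applies is vacuous. You observe, correctly, that $x-u_2$ is an outside jump, that its two arcs are $C[u_2,x]$ (through $u_0$, containing $U_2\cup\{u_0\}\cup(u_0,x)$) and $C[x,u_2]$ (through $u_1$, containing $(x,u_1)\cup\{u_1\}\cup U_1$), and that no outside jump may cross it. But a $U_1$--$U_2$ jump $t_4-t_5$ has $t_4\in U_1$ on the $u_1$ side and $t_5\in U_2$ on the $u_0$ side, so $x,t_4,u_2,t_5$ interleave in the cyclic order on $C$; since $t_4-t_5$ is also a sector jump and hence outside, it would have to cross $x-u_2$. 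Thus under the hypothesis of the claim there is \emph{never} a $U_1$--$U_2$ jump, so your hubs $A_1=\{d,u_2,x\}$, $A_2=[u_1^+,t_4]\cup[t_5,u_0^-]$ and spokes $X_4,X_5$ simply do not exist. What you call ``the remaining work'' and sketch in one sentence is in fact all of the work.

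The sketch of that remaining work is not a proof. Routing $A_2$ through $U_0$ while $x\in A_1\cap U_0$ forces $A_2$ to avoid $x$; to make $A_2$ connected across both sides of $x$ you would need a jump between $(u_0,x)$ and $(x,u_1)$, which must be an inside jump and need not exist, and you have not identified which five pairwise-disjoint connected branch sets adjacent to both hubs would result. Nothing in your text rules out the configurations where no $K_{2,5}$ minor is available and the correct conclusion is instead a longer cycle --- and indeed the paper's proof does end in a longer cycle in one of its cases. By contrast, the paper chooses hubs $\{d,u_0,u_1\}$ and $[t_1,t_2]$ (using $q_1\in X_1$, $q_2\in X_2$, $t_3$, $u_2$, $t_0$ as spokes), which reduces the problem to ``$X_1$ and $X_2$ both nonempty''; when, say, $X_1=\emptyset$ it shows that all jumps out of $U_2$ go to $t_1$, either finds a $K_{2,2}$ minor along $[u_2,u_0]$ (which combined with $x-u_2$ yields a $K_{2,5}$ minor with hubs $\{d,u_0,u_1\}$ and $\{x,u_2\}$), or invokes the outerplanarity Corollary to produce a Hamilton cycle. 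Your proposal never engages with $X_1,X_2$ at all and so cannot reach either conclusion.
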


 Let $x-u_2$ be a sector jump with $x \in (t_1,t_2)$.
 If there exist $q_1 \in X_1$ and $q_2 \in X_2$ then there is a
$K_{2,5}$ minor
 $\apm(\set{d,u_0,u_1}, [t_1,t_2];
	q_1, q_2, t_3, u_2, t_0)$%
 \whenfig{ as shown in Figure~\ref{fig:C1}}.
 So at least one of $X_1$ and $X_2$ is empty; without loss of
generality, assume $X_1=\emptyset$.
 Since $X_1=\emptyset$ and by choice of $t_0-t_1$, all
jumps out of $U_2$ must go to $t_1$.
 If there is a $K_{2,2}$ minor $\apm(u_2, u_0; s_1, s_2)$ along
$[u_2,u_0]$, then there is a $K_{2,5}$ minor
 $\apm(\set{d, u_0, u_1}, x-u_2;
	t_1, t_2, t_3, s_1, s_2)$%
 \whenfig{ as shown in Figure~\ref{fig:C2}}.
 So we may assume there is no such minor, and apply
Corollary~\ref{cor:P1} to $G[u_2,u_0]$ to find a path $P=u_0\ldots t$
such that $V(P)=(u_2,u_0]$ and $t$ is a degree $2$ vertex in
$G[u_2,u_0]$; then we must have $tt_1 \in E(G)$.
 Thus, $P \cup tt_1 \cup C[t_1, u_2] \cup u_2du_0$ is a longer cycle%
 \whenfig{, as shown in Figure~\ref{fig:C3}}.
 This completes the proof of Claim~\ref{clm:u2}.

\begin{figure}
\centering
\begin{minipage}[b]{.33\linewidth}
\centering \scalebox{.95}{\includegraphics{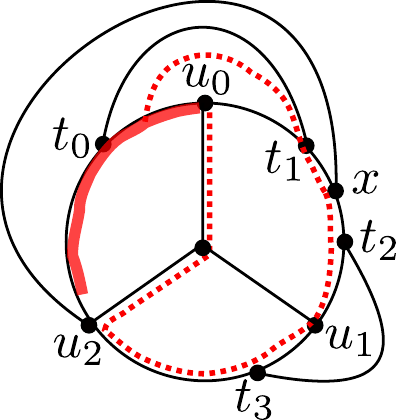}}
\captionof{figure}{\label{fig:C3}}
\end{minipage}%
\begin{minipage}[b]{.33\linewidth}
\centering \scalebox{.85}{\includegraphics{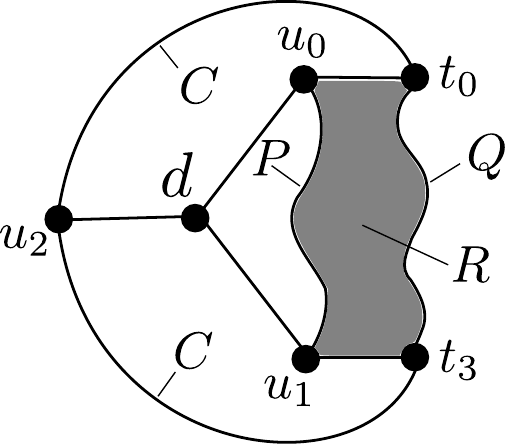}}
\captionof{figure}{\label{fig:Case1base}}
\end{minipage}%
\end{figure}

\begin{claim} Either $t_0 \neq u_0^-$ or $t_3 \neq u_1^+$ ($X_0$ and
$X_3$ cannot both be empty).
\label{clm:4}
\end{claim}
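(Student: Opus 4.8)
The plan is to assume, for a contradiction, that $X_0=X_3=\emptyset$, so that $t_0=u_0^-$, $t_3=u_1^+$, and the two length-$1$ sector jumps we are given are $u_0^- - t_1$ and $t_2 - u_1^+$ with $t_1,t_2\in U_0$. By Claim~\ref{clm:3} we have $V(G)=V(C)\cup\{d\}$, so any cycle longer than $C$ spans all of $G$ and is a Hamilton cycle; hence it suffices to exhibit either a Hamilton cycle of $G$ or a $K_{2,5}$ minor.

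First I would record the cheap facts. If $t_1=u_1^-$ then $u_0^- - t_1=u_0^- - u_1^-$ gives the standard longer cycle $L(u_0^- - u_1^-)$, a contradiction; so $t_1\ne u_1^-$, and symmetrically $t_2\ne u_0^+$. The jumps $u_0^- - t_1$ and $t_2 - u_1^+$ are both outside $C$ (sector jumps are outside jumps), so they do not cross, and since $t_1,t_2\in U_0$ this forces $t_1$ to precede or coincide with $t_2$ on $C$. Using the same non-crossing argument together with Claim~\ref{clm:u2}, a jump $v - u_2$ with $v\in U_0$ can occur only for $v=t_2$. Consequently the vertices of $U_0$ having an edge out of the arc $[u_0,u_1]$ are exactly: those of $[u_0^+,t_1]$ joined to $u_0^-$ (at least $t_1$), those of $[t_2,u_1^-]$ joined to $u_1^+$ (at least $t_2$), and $t_2$, which may additionally be joined to $u_2$.

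The core is a dichotomy on the arc $[u_0,u_1]$; note that $G[u_0,u_1]+u_0u_1$ contains the Hamilton $u_0u_1$-path along $C$, so it is a block and Lemma~\ref{lem:rooted22} applies. If there is a $K_{2,2}$ minor along $[u_0,u_1]$, then — combining it with the vertex $d$ (adjacent to $u_0,u_1,u_2$) and with the structure supplied by the jump $t_2 - u_1^+$ on the other side of $C$ — the $M$-construction upgrades it to a $K_{2,5}$ minor of $G$, a contradiction. If there is no such minor, then $G[u_0,u_1]$ is $u_0u_1$-outerplanar, so by Corollary~\ref{cor:P1} there is a Hamilton path $Q=u_0\ldots t$ of $G[\{u_0\}\cup U_0]$ with $t\in U_0$ of degree $2$ in $G[u_0,u_1]$; by $3$-connectivity $t$ has an edge leaving $[u_0,u_1]$, which by the previous paragraph is a jump to $u_0^-$ or to $u_1^+$. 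If the jump is to $u_0^-$, then $Q$, followed by the jump $t - u_0^-$, then the subpath of $C$ from $u_0^-$ around to $u_1^+$ avoiding $u_0$ (so passing $u_2$), then the edges $u_1^+u_1$, $u_1d$, $du_0$, is a Hamilton cycle of $G$, a contradiction.

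The hard part is the remaining case, in which the jump at the endpoint $t$ of the Corollary~\ref{cor:P1} path goes to $u_1^+$. Here the obvious cycle, obtained by splicing $Q$ with the jump $t - u_1^+$ and looping around $C$, misses both $u_1$ and $d$ (and misses the gap $(t_1,t_2)$ when it is nonempty). I expect to handle this by first disposing of the degenerate position $t_2=t_1^+$, where $(t_1,t_2)=\emptyset$ and the walk $u_0\,C[u_0,t_1]\,t_1 - u_0^-$, then the subpath of $C$ from $u_0^-$ to $u_1^+$ avoiding $u_0$, then $u_1^+ - t_2$ and $C[t_2,u_1^-]\,u_1$, is already a Hamilton path of $G-d$ that $d$ closes to a Hamilton cycle of $G$; and in the remaining case by exploiting the chords forced at the vertices of $(t_1,t_2)$ — which by Claim~\ref{clm:u2} and $X_0=X_3=\emptyset$ can only lead to $u_0$, to $u_1$, or back into $U_0$ — either to reroute such a Hamilton cycle through $(t_1,t_2)$ or, when no reroute exists, to build the $K_{2,2}$ minor along $[u_0,u_1]$ and fall into the first case. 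Pinning down the arc and the root pair so that both branches close up, and in particular settling this last case, is where the real effort goes.
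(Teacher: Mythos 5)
Your proposal takes a different route from the paper, and it has genuine gaps in both branches of its central dichotomy, so as written it does not constitute a proof.

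The paper handles this claim by looking at the region $R=G[t_0,t_3]$ (with $d$ outside $R$), taking the two arcs $P$ (from $u_0$ to $u_1$, facing $d$) and $Q$ (from $t_0$ to $t_3$, facing outward) of the outer boundary of $R$, and showing via Claim~\ref{clm:u2} and $3$-connectivity that $|V(P)\cap V(Q)|\le 2$. It then splits into three cases based on $|V(P)\cap V(Q)|$, and in each case extends $P$ and $Q$ (using $3$-connectivity and planarity to absorb any leftover components of $R$, or else to exhibit a $K_{2,5}$) to spanning path pairs of $R$, which it threads with $C[t_3,t_0]$ and $u_0du_1$ into a longer cycle. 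You do not set up this boundary-walk decomposition; your argument hinges instead on a dichotomy over $K_{2,2}$ minors along $[u_0,u_1]$.

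Two concrete problems. First, in the branch where a $K_{2,2}$ minor exists along $[u_0,u_1]$, you assert that ``the $M$-construction upgrades it to a $K_{2,5}$ minor,'' but $M(\cdot,\cdot)$ as defined in the paper explicitly requires two $K_{2,2}$ minors rooted at the same pair, one on each side, and you have only produced one; you gesture at ``$d$'' and ``the jump $t_2-u_1^+$'' as supplying the other two middle branch sets, but $d$ gives only one, and a single edge $t_2t_3$ does not give another branch set adjacent to both roots. (The paper does something superficially similar inside its case (3), but there the extra two middle vertices come from the two internally disjoint spanning paths $P'$ and $Q'$, a structure you have not constructed.) There is no reason a $K_{2,2}$ minor along $[u_0,u_1]$ alone forces a $K_{2,5}$; if $[u_1,u_0]$ is chordless, it is easy to set up configurations where no fifth middle set exists. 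Second, in the other branch, you yourself flag the case where the Corollary~\ref{cor:P1} endpoint $t$ jumps to $u_1^+$ as ``the hard part'' and only sketch an expectation of how it might go; this is exactly where the paper's case analysis does most of its work, and your outline (``reroute such a Hamilton cycle'' or ``build the $K_{2,2}$ minor and fall into the first case'') is not a complete argument, especially since falling into the first case would land you on the unjustified step above.
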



 Assume that $t_0=u_0^-$ and $t_3=u_1^+$.
 See Figure~\ref{fig:Case1base}.
 Let $R=G[t_0,t_3]$; we may assume that $d$ is outside $R$.
 There are three internally disjoint $t_0t_3$-paths of length $2$ or
more, namely $t_0-t_1 \cup C[t_1,t_2] \cup t_2-t_3$, $t_0 u_0 d u_1 t_3$
and $C[t_3, t_0]$, so by Claim \ref{clm:3}, $t_0 t_3 \notin E(G)$. Also
by Claim \ref{clm:3}, $u_0 u_1 \notin E(G)$.

 Let $P$ be the walk from $u_0$ to $u_1$ counterclockwise along the
outer face of $R$ and $Q$ be the walk from $t_0$ to $t_3$ clockwise
along the outer face of $R$.  The outer face of $R$ is bounded by $P
\cup Q \cup \set{u_0t_0, u_1t_3}$.
 If $P=(u_0=p_0) p_1 p_2 \ldots p_{r-1} (p_r=u_1)$ then each $p_i$, $1 \le i \le
r$, is closer to $t_3$ along $C[t_0,t_3]$ than $p_{i-1}$, so $P$ has no
repeated vertices and is a path; similarly, $Q$ is a path.
 Additionally, $|V(P)| \geq 3$ because $u_0u_1 \notin E(G)$ and $|V(Q)|
\geq 3$ because $t_0t_1, t_2t_3 \in E(Q)$ (possibly $t_1=t_2$).


 The paths $P$ and $Q$ may intersect but only in limited ways.
 Any intersection vertex must belong to $[t_1, t_2]$.
 If $P$ and $Q$ intersect at two non-consecutive vertices on $C$, then
using Claim~\ref{clm:u2} these two vertices would form a $2$-cut in $G$.
Hence there are three possibilities for $P$ and $Q$: $V(P) \cap
V(Q)=\{x,x^+\}$, $V(P) \cap V(Q)=\{x\}$, or $V(P) \cap V(Q) =
\emptyset$.

 \smallskip
 (1) First assume $V(P) \cap V(Q)=\{x,x^+\} \subseteq [t_1, t_2]$.
 We will show that there is a longer cycle.
 Let $R_1=G[t_0,x]$ and $R_2=G[x^+,t_3]$.
 Then $t_0t_1 \in E(R_1)$ and $t_2t_3 \in E(R_2)$.
 Let $P_1 = P \cap R_1$ and $Q_1 = Q \cap R_1$; then $u_0 \in V(P_1)$,
$t_0t_1 \in E(Q_1)$, and $V(P_1) \cap V(Q_1) = \set{x}$.
 First we construct a new $u_0x$-path $P'_1$ and a new $t_0x$-path
$Q'_1$ such that $V(P'_1 \cup Q'_1) = V(R_1)$ and $V(P'_1) \cap
V(Q'_1) = \set{x}$.
 If $Q_1$ is just the edge $t_0t_1$ (so $t_1=x$) we may take $P'_1 =
C[u_0, x]$ and $Q'_1 = Q_1$.  So we may assume that $|V(Q_1)| \ge 3$.
 
 Let $P'_1$ be a $u_0x$-path in $R_1$ and $Q'_1$ a $t_0x$-path in $R_1$
so that $V(P_1) \subseteq V(P'_1)$, $V(Q_1) \subseteq V(Q'_1)$ and
$V(P'_1) \cap V(Q'_1) = \set{x}$.  Such paths exist since we can take
$P'_1=P_1$ and $Q'_1=Q_1$.  Additionally assume  $|V(P'_1) \cup
V(Q'_1)|$ is maximum.
 Suppose $V(P'_1 \cup Q'_1) \neq V(R_1)$ and let $K$ be a component of
$R_1 - V(P'_1 \cup Q'_1)$.
 Because $G$ is $3$-connected, $K$ must have at least three neighbors in
$G$.
 Since $V(P_1 \cup Q_1) \subseteq V(P'_1 \cup Q'_1)$, $K$ contains
no external vertices of $R_1$.
 Therefore, by planarity all neighbors of $K$ are in $R_1$ and hence in
$V(P'_1 \cup Q'_1)$.
 Thus, $K$ has at least two neighbors in one of $P'_1$ or $Q'_1$.

 Suppose first that $K$ is adjacent to $w_1, w_2 \in V(Q'_1)$.
 If $w_1w_2 \in E(Q'_1)$, then we can lengthen $Q'_1$ (still with
$V(Q_1) \subseteq V(Q'_1)$): replace the edge $w_1w_2$ with a path from
$w_1$ to $w_2$ through $K$.
 Hence we may assume that $Q'_1 = t_0 \ldots w_1 \ldots w_3 \ldots w_2
\ldots x$ with $w_3 \ne w_1, w_2$, and we have a
 $K_{2,5}$ minor
 $\apm([u_2, t_0] \cup Q'_1[t_0,w_1], Q'_1[w_2,x] \cup (P \cap R_2);
	d, u_0, K, w_3, t_3)$%
 \whenfig{, a special case of which is shown in Figure~\ref{fig:F2}}.
 Suppose now that $K$ is adjacent to $w_1, w_2 \in V(P'_1)$.
 If $w_1w_2 \in E(P'_1)$ then we can lengthen $P'_1$,
 so $P'_1 = u_0 \ldots w_1 \ldots w_3 \ldots w_2 \ldots x$ with $w_3 \ne
w_1, w_2$, and we have a
 $K_{2,5}$ minor
 $\apm([u_2, u_0] \cup P'_1[u_0,w_1], P'_1[w_2,x] \cup (P \cap R_2);
	d, w_3, K, y, t_3)$
 where $y$ is an internal vertex of $Q'_1$, which exists because
$|V(Q'_1)| \ge |V(Q_1)| \ge 3$.
 Thus no such component $K$ exists, $V(P'_1 \cup Q'_1) = V(R_1)$, and
$P'_1$ and $Q'_1$ are the desired paths in $R_1$.
 
 By symmetric arguments, $R_2$ has a $u_1x^+$-path $P'_2$
and a $t_3x^+$-path $Q'_2$ such that $V(P'_2 \cup
Q'_2)=V(R_2)$ and $V(P'_2) \cap V(Q'_2)=\set{x^+}$.
 Hence there is a longer cycle $C[t_3,t_0] \cup Q'_1 \cup P'_1 \cup u_0du_1
\cup P'_2 \cup Q'_2$%
 \whenfig{ as shown in Figure~\ref{fig:F1}}.

\begin{figure}
\centering
\begin{minipage}[b]{.33\linewidth}
\centering \scalebox{.85}{\includegraphics{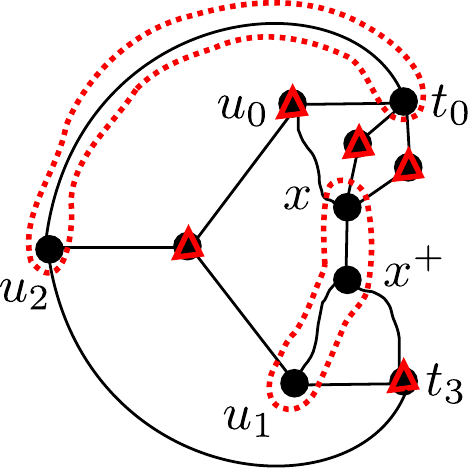}}
\captionof{figure}{\label{fig:F2}}
\end{minipage}%
\begin{minipage}[b]{.33\linewidth}
\centering \scalebox{.85}{\includegraphics{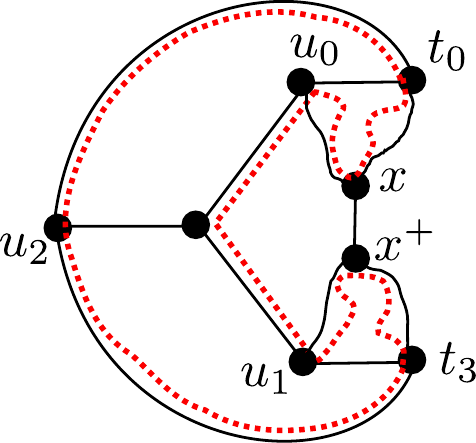}}
\captionof{figure}{\label{fig:F1}}
\end{minipage}%
\begin{minipage}[b]{.33\linewidth}
\centering \scalebox{.85}{\includegraphics{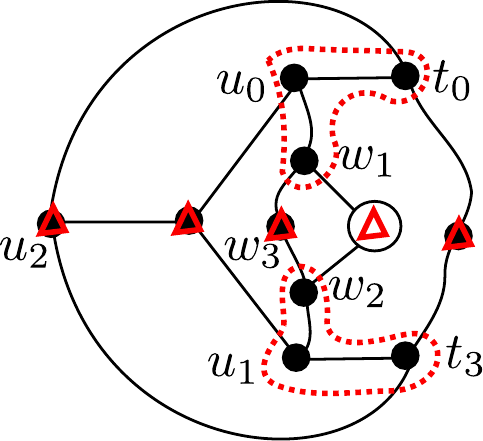}}
\captionof{figure}{\label{fig:F3}}
\end{minipage}%
\end{figure}

 \smallskip
 (2) Assume $V(P) \cap V(Q)=\{x\} \subseteq [t_1, t_2]$.
 The argument here will be very similar to, but not exactly the same as,
that in (1).
 Again let $R_1=G[t_0,x]$ and $R_2=G[x^+,t_3]$. Then $t_0t_1 \in E(R_1)$
and $t_2 \in V(R_2)\cup\{x\}$.
 Using the argument from (1), in $R_1$ we find a $u_0x$-path $P'_1$ and
a $t_0x$-path $Q'_1$ with $V(P'_1) \cup V(Q'_1) = V(R_1)$ and $V(P'_1)
\cap V(Q'_1) = \set{x}$.

 We also want to find in $R_2$ a $u_1x^+$-path $P'_2$ and a
$t_3x^+$-path $Q'_2$ such that $V(P'_2 \cup Q'_2)=V(R_2)$ and $V(P'_2)
\cap V(Q'_2)=\set{x^+}$, but this requires some changes from (1).
 Let $P_2$ be the segment of the outer boundary of $R_2$ clockwise from
$u_1$ to $x^+$, and let $Q_2$ be the segment counterclockwise from $t_3$
to $x^+$.  Then $P_2$ and $Q_2$ are paths by the same argument as for
$P$ and $Q$.
 If there is an edge $t_3x^+$ (including when $x^+=u_1$) then we can
take $P'_2 = C[x^+,u_1]$ and $Q_2'=Q_2=t_3x^+$, so we may assume there
is no such edge and hence $|V(Q_2)| \ge 3$.

 Assume there is $v \in (V(P_2) \cap V(Q_2)) - \set{x^+}$.
 Using Claim~\ref{clm:u2}, every edge leaving $(x,v)$ (which contains
$x^+$) goes to $x$ or $v$, or is the edge $t_2 u_2$.
 But since $t_2$ is adjacent to $t_3$, $t_2 \notin (x,v)$
 so $\{x, v\}$ is a $2$-cut in $G$, a contradiction.
 Thus, $V(P_2) \cap V(Q_2) = \set{x^+}$.  Now we have a $u_1x^+$-path
$P_2$ and a $t_3x^+$-path $Q_2$ so that (a) all external vertices of
$R_2$ belong to $V(P_2 \cup Q_2)$, (b) $V(P_2) \cap V(Q_2) = \set{x^+}$,
and (c) $|V(Q_2)| \ge 3$.
 This allows us to apply the argument for $|V(Q_2)| \ge 3$ from (1) to
find the required $P'_2$ and $Q'_2$ in $R_2$.

 As in (1), we use $P'_1, Q'_1, P'_2, Q'_2$ to find a
longer cycle.

 \smallskip
 (3) Finally suppose $V(P) \cap V(Q) = \emptyset$. Let $P'$ be a
$u_0u_1$-path in $R$ and $Q'$ a $t_0t_3$-path in $R$ so that $V(P)
\subseteq V(P')$, $V(Q) \subseteq V(Q')$ and $V(P') \cap V(Q') =
\emptyset$.  
 Such paths exist because we can take $P'=P$ and $Q'=Q$. Assume
additionally that $|V(P') \cup V(Q')|$ is maximum.
 Suppose $V(P' \cup Q') \neq V(R)$ and let $K$ be a component of $R-V(P'
\cup Q')$.
 Because $G$ is $3$-connected, $K$ must have at least three neighbors in
$G$.
 Since $V(P \cup Q) \subseteq V(P' \cup Q')$, $K$ contains
no external vertices of $R$.
 Therefore, by planarity all neighbors of $K$ are in $R$ and hence in
$V(P' \cup Q')$.
 Thus, $K$ has at least two neighbors in one of $P'$ or $Q'$.

 First suppose $K$ is adjacent to $w_1,w_2 \in V(P')$.
 If $w_1w_2 \in E(P')$, then we can lengthen $P'$ (still with
$V(P) \subseteq V(P')$): replace the edge $w_1w_2$ with a path from
$w_1$ to $w_2$ through $K$.
 Hence we may assume that $P' = u_0 \ldots w_1 \ldots w_3 \ldots w_2
\ldots u_1$ with $w_3 \ne w_1, w_2$, and we have a
 $K_{2,5}$ minor
 $\apm(u_0 t_0 \cup P'[u_0,w_1], P'[w_2,u_1] \cup u_1t_3;
	u_2, d, w_3, K, y)$%
 \whenfig{ as shown in Figure~\ref{fig:F3}},
 where $y$ is an internal vertex of $Q'$, which exists because $|V(Q')|
\ge |V(Q)| \ge 3$.
 We can reason similarly if $K$ is adjacent to $w_1, w_2 \in V(Q')$.  
 Thus no such component $K$ exists and $V(P' \cup Q') = V(R)$.

 Suppose there is a $K_{2,2}$ minor
 $\apm(t_0u_0, u_1t_3; s_1, s_2)$
 in $G[u_1, u_0]$.
 Then there is a $K_{2,5}$ minor
 $\apm(t_0u_0, u_1t_3;
	s_1, \allowbreak s_2, \allowbreak d, p, q)$%
 \whenfig{ as shown in Figure~\ref{fig:F5}},
 where $p$, $q$ are arbitrary internal vertices of $P$, $Q$ respectively.
 So we may assume there is no such minor.  Therefore, there is no
$K_{2,2}$ minor along $[t_3,t_0]$ or any of its subintervals.

 Suppose that $(u_2,t_0) = \emptyset$ or all jumps out of $(u_2,t_0)$ go
to $u_0$.
 Apply Corollary~\ref{cor:P1} to $G[u_2,t_0]$ to find a path
$J=t_0\ldots t$ such that $V(J)=(u_2,t_0]$ and either $t=t_0$ if $(u_2,
t_0) = \emptyset$, or $t$ is a vertex of degree $2$ in $G[u_2,t_0]$,
from which there must be a jump to $u_0$.
 In either case, $t_0u_0 \in E(G)$ and there is a longer cycle
	$P' \cup u_1du_2 \cup C[t_3,u_2] \cup Q' \cup J \cup tu_0$%
 \whenfig{; the case when $(u_2,t_0) \neq \emptyset$ is
shown in Figure~\ref{fig:F4}}.

 So we may assume that not all jumps
out of $(u_2,t_0)$ go to $u_0$ and so there is a jump $x_1-x_2$ with
$x_1 \in (u_2, t_0)$ and $x_2 \in [t_3,u_2)$.
 By a symmetric argument there is also a jump $x_2'-x_1'$ with $x_2' \in
(t_3,u_2)$ and $x_1' \in (u_2, t_0]$.  These jumps cannot cross because
they are just edges, so we cannot have both $x_2=t_3$ and $x_1'=t_0$. 
Without loss of generality, $x_2 \ne t_3$, so $x_1-x_2$ is a jump from
$(u_2,t_0)$ to $(t_3,u_2)$.  Out of all such jumps we may assume that
$x_1-x_2$ has $x_1$ closest to $t_0$ and $x_2$ closest to $t_3$.

 If there is a jump $y_1-y_2$ from $(u_2,x_1)$ to $(x_1,u_0]$, then
$x_1-x_2$ and $y_1-y_2$ give a $K_{2,2}$ minor in $G[u_1,u_0]$ that we excluded above, namely
 $\apm([u_1, x_2], [y_2, u_0]; 	x_1, y_1)$ if $y_2 \ne u_0$, or
 $\apm([u_1, x_2], t_0u_0; x_1, y_1)$ if $y_2 = u_0$.
 A symmetric minor exists if there is a jump from $(x_2,u_2)$ to
$[u_1,x_2)$.
 Hence edges of $G$ leaving $G[x_2,x_1]$ leave at $x_1$, $x_2$ or $u_2$.
 Since $G[x_2,x_1]$ is bounded by the cycle $C[x_2,x_1] \cup
x_1x_2$, $G[x_2,x_1]$ is $2$-connected by Lemma \ref{lem:insidecycle}.
 Apply Lemma~\ref{lem:P0} to $G[x_2,x_1]$ to find a path
$J_1=x_2x_1\ldots t$ where $V(J_1)=[x_2,x_1]$ and $t$ is a degree $2$
vertex in $G[x_2,x_1]$ and hence must be $u_2$.
 Apply Corollary~\ref{cor:P1} to $G[x_1,t_0]$ to find a path
$J_2=t_0\ldots s$ where $V(J_2)=(x_1,t_0]$ and either $s=t_0$ or $s$ is
a degree $2$ vertex in $G[x_1,t_0]$.  In either case $su_0 \in E(G)$ and
there is a longer cycle
 $P' \cup u_1du_2 \cup J_1 \cup C[t_3,x_2] \cup Q' \cup J_2 \cup su_0$%
 \whenfig{, as shown in Figure~\ref{fig:F7}}.

\begin{figure}
\centering
\begin{minipage}[b]{.33\linewidth}
\centering \scalebox{.85}{\includegraphics{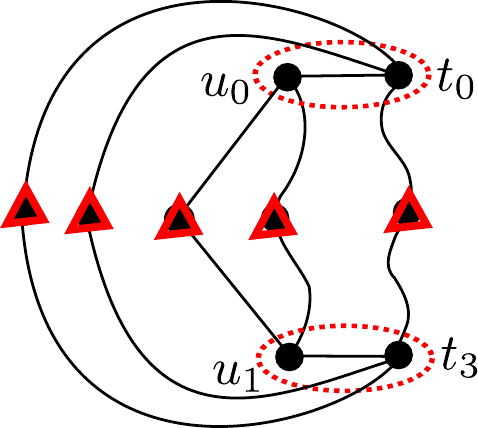}}
\captionof{figure}{\label{fig:F5}}
\end{minipage}%
\begin{minipage}[b]{.33\linewidth}
\centering\scalebox{.85}{\includegraphics{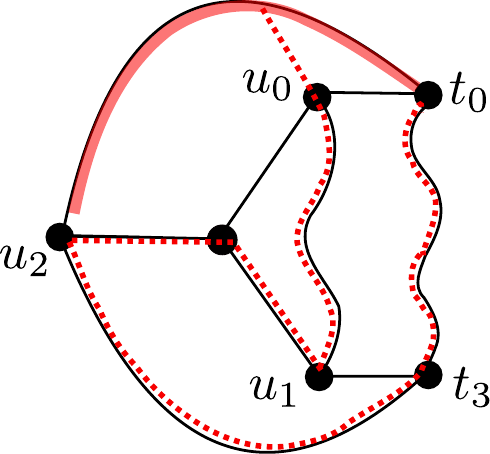}}
\captionof{figure}{\label{fig:F4}}
\end{minipage}%
\begin{minipage}[b]{.33\linewidth}
\centering \scalebox{.85}{\includegraphics{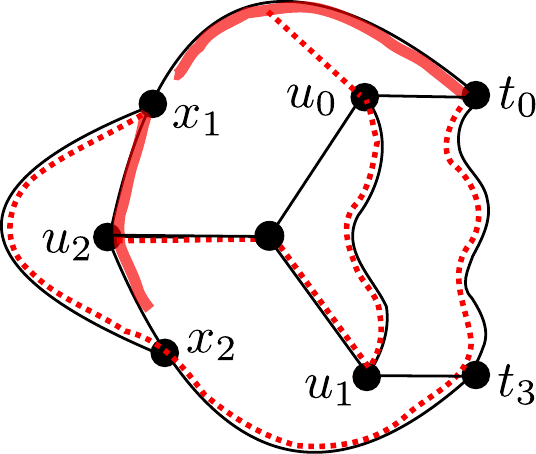}}
\captionof{figure}{\label{fig:F7}}
\end{minipage}%
\end{figure}

This completes the proof of Claim~\ref{clm:4}.

 \medskip

\begin{claim} Either $t_1= u_0^+$ or $t_2 = u_1^-$ (at least one of
$X_1$ and $X_2$ is empty).
\label{clm:5}
\end{claim}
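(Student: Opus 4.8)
The plan is to assume for contradiction that $X_1 \neq \emptyset$ and $X_2 \neq \emptyset$, and to derive either a cycle longer than $C$ or a $K_{2,5}$ minor. Fix $q_1 := u_0^+ \in X_1 = (u_0,t_1)$ and $q_2 := u_1^- \in X_2 = (t_2,u_1)$; both lie in $U_0$, with $q_1$ adjacent on $C$ to $u_0$ and $q_2$ adjacent on $C$ to $u_1$.

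The first step is a planarity observation that fixes the layout on $U_0$. By Claim~\ref{clm:3} the jumps $t_0-t_1$ and $t_2-t_3$ are the single edges $t_0t_1$ and $t_2t_3$, and both of these are outside $C$. Reading around $C$ one meets $t_1,t_2$ (in their order along $U_0$), then $t_3$, then $t_0$; if $t_2$ preceded $t_1$ on $U_0$ this cyclic order would be $t_2,t_1,t_3,t_0$, in which the pairs $\{t_0,t_1\}$ and $\{t_2,t_3\}$ interleave, so the edges $t_0t_1$ and $t_2t_3$, both drawn outside $C$, would have to cross --- a contradiction. Hence $t_1$ precedes or coincides with $t_2$ on $U_0$, so the forward order on $U_0$ is $u_0, q_1, \dots, t_1, \dots, t_2, \dots, q_2, u_1$, with $(t_1,t_2)$ possibly empty.

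With this layout there is an immediate $K_{2,4}$: branch sets $\set{d,u_0,u_1,u_2}$ and $C[t_1,t_2]$, joined by four internally disjoint paths --- through $(u_0,t_1)$ [meeting the first set at $u_0$, the second at $t_1$], through $(t_2,u_1)$ [at $u_1$ and $t_2$], through $U_1$ [at $u_1$, and at $t_2$ via $t_2t_3$], and through $U_2$ [at $u_2$, and at $t_1$ via $t_0t_1$]. All the work is in promoting this to a $K_{2,5}$ minor (a fifth ray) or else to a longer cycle, and here a case analysis enters. If $(t_1,t_2)\neq\emptyset$ then $\set{t_1,t_2}$ is not a $2$-cut, so there is a jump out of $(t_1,t_2)$, and the planarity picture above forces its far end to lie either in $[u_0,t_1)\cup(t_2,u_1]$ or on the arc of $C$ from $t_3$ through $u_2$ to $t_0$. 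In the first case this jump, together with the arcs of $U_0$ and the edges at $d$, yields a $K_{2,2}$ minor along a subinterval of $[u_0,u_1]$ that combines with $d$, $u_2$ and the two sector jumps into a $K_{2,5}$ minor in the style of the $M(\cdot,\cdot)$ construction; in the second case it directly supplies the missing fifth ray into $U_1$ or $U_2$, or else places us in the situation of Claim~\ref{clm:u2}. If $(t_1,t_2)=\emptyset$, so $t_1,t_2$ are equal or consecutive, the configuration is tight: one shows there is a $K_{2,2}$ minor along $[u_1,u_2]$ or along $[u_2,u_0]$, which pairs with the $K_{2,2}$ minor on the complementary side and with $d$ to give a $K_{2,5}$ minor; and if no such minor exists then the relevant induced subintervals carry no rooted $K_{2,2}$ minor, so by Lemma~\ref{lem:rooted22} they are path-outerplanar and Lemma~\ref{lem:P0} and Corollary~\ref{cor:P1} produce Hamilton paths of them which splice, through $u_0du_1$ or $u_0du_2$ and the jumps $t_0t_1$, $t_2t_3$, into a cycle longer than $C$.

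The main obstacle is this last layer: in the degenerate configurations --- some $X_i$ empty, $(t_1,t_2)$ empty, or no usable jump out of the middle of $U_0$ --- no $K_{2,5}$ minor is directly available, and one must instead verify that every relevant induced subinterval is path-outerplanar and then assemble the resulting Hamilton paths, through $d$ and the two sector jumps, into a single longer cycle, checking throughout that the spliced pieces are vertex-disjoint and that the correct interval is used for each application of Corollary~\ref{cor:P1}.
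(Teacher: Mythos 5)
There is a genuine gap here, and you yourself flag it: your sketch culminates in a $K_{2,4}$ minor and a sprawling, admittedly incomplete case analysis aimed at promoting it to a $K_{2,5}$ or a longer cycle. The paper dispatches Claim~\ref{clm:5} in three lines, and the ingredient you are missing is Claim~\ref{clm:4}, which was proved immediately beforehand precisely so it could be used here.

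Concretely: you place $u_2$ inside your large branch set $\{d,u_0,u_1,u_2\}$, which costs you a spoke, and you leave $t_0$ out of your other branch set $C[t_1,t_2]$, which forces you to spend a spoke on $t_0$. With those choices you are stuck at four spokes ($u_0^+$, $u_1^-$, $t_3$, $t_0$) and must hunt for a fifth, which is why your argument bogs down. The paper instead takes $R_1 = u_0du_1$ (deliberately omitting $u_2$) and $R_2 = t_0t_1\cup[t_1,t_2]$ (deliberately absorbing $t_0$). Now $u_2$ is free to be a spoke, attaching to $R_1$ via the edge $u_2d$ and to $R_2$ via $C[u_2,t_0]$, and $t_3$, $u_0^+$, $u_1^-$ give three more. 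Claim~\ref{clm:4} says $X_0$ and $X_3$ are not both empty; assuming (by symmetry) $X_0\ne\emptyset$, $u_0^-$ lies strictly inside $(t_0,u_0)$ and provides the fifth spoke, reaching $R_1$ via $u_0^-u_0$ and $R_2$ via $C[t_0,u_0^-]$. No case analysis, no rooted-$K_{2,2}$ side arguments, no Hamilton-path splicing. Your planarity preliminary (that $t_1$ comes no later than $t_2$ on $U_0$) is correct but already implicit in the definitions of $X_1$ and $X_2$, so it adds nothing. The substantive correction is: invoke Claim~\ref{clm:4}, and choose branch sets so that $u_2$ is available as a spoke and $t_0$ is absorbed.
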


 Assume $t_1\neq u_0^+$ and $t_2 \neq u_1^-$. By Claim~\ref{clm:4},
either $t_0 \neq u_0^-$ or $t_3 \neq u_1^+$. Without loss of generality,
suppose $t_0 \neq u_0^-$. Then there is a $K_{2,5}$ minor
 $\apm(u_0du_1, t_0t_1 \cup [t_1, t_2];
	u_0^-, u_0^+, u_1^-, t_3, u_2)$%
 \whenfig{ as shown in Figure~\ref{fig:Picture2}}.

\begin{claim} At most two pairs of sectors have jumps between them.
\label{clm:6}
\end{claim}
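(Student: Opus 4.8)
The plan is to argue by contradiction. We have already assumed (without loss of generality) that there are jumps $t_0-t_1$ and $t_2-t_3$, so a third pair of sectors carrying a jump would have to be the pair $U_1,U_2$; suppose then that $t_4-t_5$ is an outermost $U_1-U_2$ jump. Now every pair of sectors carries a jump, so the whole configuration is symmetric under the cyclic rotation $U_0\to U_1\to U_2\to U_0$, and Claims \ref{clm:u2}, \ref{clm:4} and \ref{clm:5} may be invoked in each of their three rotated forms. In particular, from the rotated forms of Claim \ref{clm:5}, at least one of $X_1,X_2$, at least one of $X_3,X_4$, and at least one of $X_5,X_0$ is empty; from the rotated forms of Claim \ref{clm:4}, the pairs $\set{X_0,X_3}$, $\set{X_2,X_5}$ and $\set{X_4,X_1}$ are each not simultaneously empty; and from the rotated forms of Claim \ref{clm:u2}, no jump goes from the interior of a central interval $(t_1,t_2)$, $(t_3,t_4)$ or $(t_5,t_0)$ of a sector to the opposite corner $u_2$, $u_0$ or $u_1$ respectively.

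The core of the argument is to produce a $K_{2,5}$ minor directly from this threefold-symmetric picture. The natural candidate takes $R_1=\set{d,u_0,u_1,u_2}$, which is connected (a star centered at $d$, since $u_iu_j\notin E(G)$ by Claim \ref{clm:3}) and has the six neighbors $u_0^-,u_0^+,u_1^-,u_1^+,u_2^-,u_2^+$ on $C$; it takes $R_2$ to be the union of the three central paths $C[t_1,t_2]$, $C[t_3,t_4]$, $C[t_5,t_0]$ together with the three jump edges $t_0t_1,t_2t_3,t_4t_5$, which is connected since consecutive central paths are joined by a jump and is disjoint from $R_1$ because the central paths lie in sector interiors; and it takes five of the six vertices $u_j^\pm$ as the branch vertices $b_1,\dots,b_5$. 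Each $b_i$ is adjacent in $C$ to a vertex of $R_1$, and reaches $R_2$ either by continuing along $C$ into the interior of its sector or, when it happens to be an endpoint of a jump, along that jump; this yields the $K_{2,5}$ minor $\apm(\set{d,u_0,u_1,u_2}, R_2; b_1,\dots,b_5)$, contradicting the hypothesis.

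What remains, and what I expect to be the real work, is the degenerate bookkeeping: a central path $C[t_1,t_2]$ (etc.) may be a single vertex, or a candidate branch vertex $u_j^\pm$ may coincide with a jump endpoint $t_i$ (equivalently some $X_i$ is empty and that candidate lies in $R_2$), so that the generic choice above collapses. Here the rotated Claims \ref{clm:4} and \ref{clm:5} are used to show that not too many of these coincidences can occur together; one then re-selects the five branch vertices among the survivors and, if necessary, drops a now-redundant jump edge from $R_2$ while keeping it connected through the remaining jumps. In the few configurations where a planar obstruction rules out every such minor — the hardest being ones where several of the $X_i$ vanish simultaneously, such as $X_1=X_3=X_5=\emptyset$ — one instead finds a longer cycle: the part of $G$ inside a suitable cycle built from $C$ and one jump is $2$-connected by Lemma \ref{lem:insidecycle}, has no $K_{2,2}$ minor along the relevant interval, and is therefore path-outerplanar, so Lemma \ref{lem:P0} (or Corollary \ref{cor:P1}) supplies a Hamilton path that closes up with $d$ and the remaining jumps into a cycle longer than $C$. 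Either outcome contradicts the choice of $C$, completing the proof.
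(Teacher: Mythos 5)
Your reduction to the threefold-symmetric picture is correct, and your reading of the rotated Claims~\ref{clm:4} and~\ref{clm:5} is accurate. But if you push those constraints to their conclusion you will find that they do not merely say ``not too many coincidences occur'': they force \emph{exactly} three of the $X_i$ to be empty, and moreover force the three empty ones to be one of the two rotationally symmetric triads $\set{X_0,X_2,X_4}$ or $\set{X_1,X_3,X_5}$. (From each of the three pairs $\set{X_1,X_2}$, $\set{X_3,X_4}$, $\set{X_5,X_0}$ at least one is empty, so at least three are empty; from each of $\set{X_0,X_3}$, $\set{X_2,X_5}$, $\set{X_4,X_1}$ at most one is empty, so at most three are; and chasing the implications around the cycle shows the empty triple must be symmetric.) So in \emph{every} instance of the configuration you are considering, exactly three of your six candidate branch vertices $u_j^\pm$ coincide with jump endpoints $t_i$ and thus already lie inside your $R_2$. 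You are left with only three available branch vertices, never five. The ``generic choice'' you build your argument on is vacuous, and the ``degenerate bookkeeping'' you defer is the entire claim.

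The re-selection idea does not repair this. If, say, $X_1=X_3=X_5=\emptyset$ so that $t_1=u_0^+,\,t_3=u_1^+,\,t_5=u_2^+$ sit inside $R_2$, then to promote one of them, say $t_1$, to a branch vertex you must delete it from $R_2$; the resulting $C(t_1,t_2]\cup C[t_3,t_4]\cup C[t_5,t_0]$ stays connected through $t_2t_3$ and $t_4t_5$, so that gets you to four. But deleting a second $t_i$ disconnects $R_2$, and there is no further jump or edge you are entitled to assume. So you are stuck at four, and the paper's proof shows the fifth branch vertex genuinely has to come from somewhere else. The actual argument at this point introduces new data: it uses $3$-connectivity to produce a jump $r_i-r_i'$ out of each nonempty $X_i$, constrains each $r_i'$ to lie in a two-element set, and then splits into cases according to how many of the $r_i'$ land on a $u_j$, producing in each case either a $K_{2,5}$ minor of a shape quite different from yours (e.g.\ $M([t_0,t_1],[u_1,t_4])$) or a longer cycle via Corollary~\ref{cor:P1}. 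None of that case analysis is present in your sketch, and your closing paragraph, while naming the right toolkit (Lemmas~\ref{lem:insidecycle} and~\ref{lem:P0}, Corollary~\ref{cor:P1}), does not identify which cycle to cut along, which interval must be $K_{2,2}$-minor-free, or why. As written, the proposal has a genuine gap.
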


 Assume that there are three sector jumps $t_0-t_1$, $t_2-t_3$, and
$t_4-t_5$ where possibly $t_0=t_5$, $t_1=t_2$, or $t_3=t_4$.
 By Claim~\ref{clm:4}, $X_0$ and $X_3$ cannot both be empty and
symmetrically, $X_1$ and $X_4$ cannot both be empty and $X_2$ and $X_5$
cannot both be empty. Hence $X_i \neq \emptyset$ for at least three $i$.
 By Claim~\ref{clm:5}, at least one of $X_1$ and $X_2$ is empty and
symmetrically, at least one of $X_3$ and $X_4$ is empty and at least one
of $X_5$ and $X_0$ is empty. Hence $X_i \neq \emptyset$ for exactly
three $i$. Furthermore, the nonempty $X_i$ must be rotationally
symmetric about $C$. Without loss of generality, suppose $X_0$, $X_2$,
and $X_4$ are nonempty and $X_1$, $X_3$, and $X_5$ are empty.

\begin{figure}
\centering
\begin{minipage}[b]{.33\linewidth}
\centering \scalebox{.95}{\includegraphics{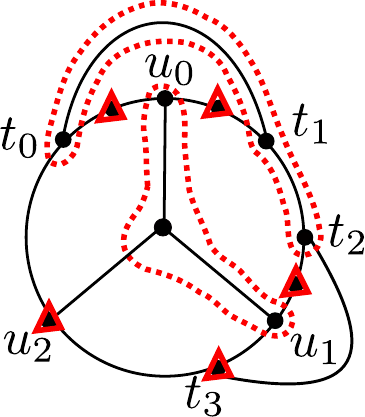}}
\captionof{figure}{\label{fig:Picture2}}
\end{minipage}%
\begin{minipage}[b]{.33\linewidth}
\centering \scalebox{.95}{\includegraphics{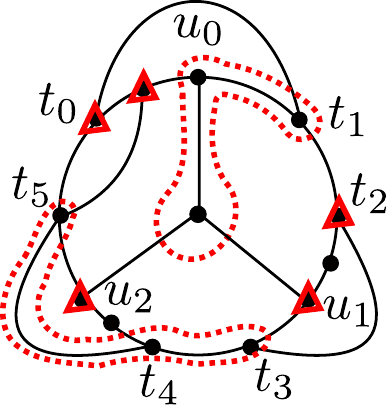}}
\captionof{figure}{\label{fig:R8}}
\end{minipage}%
\end{figure}

If $t_1=t_2$, then there is a standard longer cycle $L(u_0^+-u_1^+)$.
 Thus $t_1 \ne t_2$, and symmetrically $t_3 \ne t_4$ and $t_5 \ne t_0$.

 Consider a jump $r_0-r_0'$ out of $X_0$. There are three options for
$r_0'$: $r_0' \in [t_5,t_0)$, $r_0'=t_1$, or $r_0'=u_2$.
 If $r_0' \in [t_5,t_0)$ then, since $t_1 \neq t_2$, there is a $K_{2,5}$
minor
 $\apm(du_0t_1, [t_3,t_4] \cup t_4t_5 \cup [t_5,r_0'];
	t_0, r_0, t_2, u_1, u_2)$%
 \whenfig{; the case $r_0'=t_5$ is shown in Figure~\ref{fig:R8}}.
 Thus, $r_0' \in \set{u_2, t_1}$, and symmetrically $r_2' \in \set{u_0,
t_3}$
 for a jump $r_2-r_2'$ out of $X_2$, and $r_4' \in \set{u_1, t_5}$ for a
jump $r_4-r_4'$ out of $X_4$.

\begin{figure}
\centering
\begin{minipage}[b]{.33\linewidth}
\centering \scalebox{.95}{\includegraphics{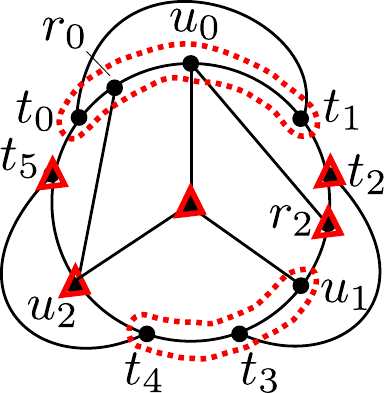}}
\captionof{figure}{\label{fig:R4}}
\end{minipage}%
\begin{minipage}[b]{.33\linewidth}
\centering \scalebox{.95}{\includegraphics{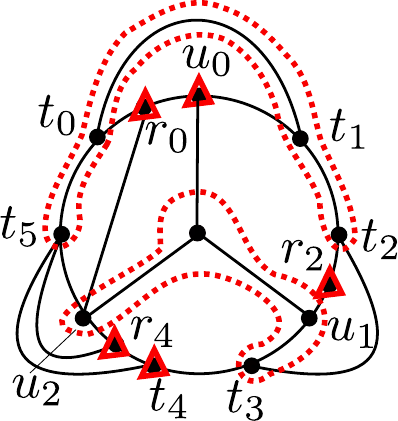}}
\captionof{figure}{\label{fig:R12}}
\end{minipage}%
\begin{minipage}[b]{.33\linewidth}
\centering \scalebox{.95}{\includegraphics{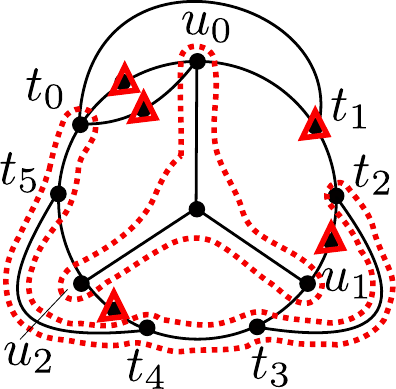}}
\captionof{figure}{\label{fig:R15}}
\end{minipage}%
\end{figure}

 If at least two of $r_0'$, $r_2'$, and $r_4'$ belong to
$U=\set{u_0,u_1,u_2}$ then without loss of generality we may assume that
$r_0'=u_2$ and $r_2'=u_0$.  We have a $K_{2,5}$ minor
 $M([t_0,t_1], [u_1,t_4])$%
 \whenfig{ as shown in Figure~\ref{fig:R4}}.
 If only one of $r_0'$, $r_2'$, and $r_4'$ belongs to $U$, then without
loss of generality $r_0'=u_2$ and there is a
$K_{2,5}$ minor
 $\apm(u_2du_1t_3, [t_5, t_0] \cup t_0t_1 \cup [t_1, t_2];
	r_0, u_0, r_2, t_4, r_4)$%
 \whenfig{ as shown in Figure~\ref{fig:R12}}.
 Hence we may assume that all jumps out of $X_0$ go to $t_1$, out of
$X_2$ go to $t_3$, and out of $X_4$ go to $t_5$.

 If there is a $K_{2,2}$ minor $\apm(t_0, u_0; s_1, s_2)$ along
$[t_0,u_0]$, then there is a $K_{2,5}$ minor
 $\apm(\set{d, u_0, u_1, u_2},
	t_2t_3 \cup [t_3,t_4] \cup t_4t_5 \cup [t_5,t_0];
	s_1, s_2, t_1, r_2, r_4)$%
 \whenfig{ as shown in Figure~\ref{fig:R15}}.
 Hence there is no $K_{2,2}$ minor along $[t_0,u_0]$, or symmetrically,
along $[t_2, u_1]$ or $[t_4, u_2]$.
 Because all jumps out of $X_4$ go to $t_5$
 we can apply Corollary~\ref{cor:P1} to $G[t_4,u_2]$ and find a path
$P=t_4\ldots t$ where $V(P) = [t_4,u_2)$ and $t$ has degree $2$ in
$G[t_4,u_2]$, so $tt_5 \in E(G)$.
 If $(t_5,t_0) = \emptyset$, then there is a longer cycle
 $C[t_0,u_0] \cup t_0t_1 \cup C[t_1,t_4] \cup P \cup tt_5u_2du_0$%
 \whenfig{ as shown in Figure~\ref{fig:R2}.}
 Hence $(t_5,t_0) \neq \emptyset$.
 Let $y-y'$ be a jump out of
$(t_5,t_0)$.
 Since all jumps out of $X_0$ go to $t_1$, $y' \notin X_0$, so $y' =
u_0$ or $u_2$.  Then there is a $K_{2,5}$ minor
 $\apm(yy' \cup u_0du_2, [t_1,t_2] \cup t_2t_3 \cup [t_3,t_4];
	t_0, r_0, u_1, r_4, t_5)$%
 \whenfig{; the case $y'=u_0$ is shown in Figure~\ref{fig:R14}}.
 This completes the proof of Claim~\ref{clm:6}.

 \medskip

\begin{figure}
\centering
\begin{minipage}[b]{.45\linewidth}
\centering \scalebox{.95}{\includegraphics{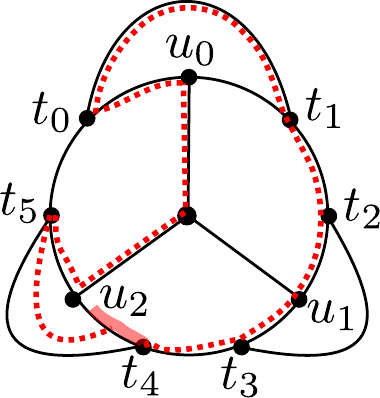}}
\captionof{figure}{\label{fig:R2}}
\end{minipage}%
\begin{minipage}[b]{.45\linewidth}
\centering \scalebox{.95}{\includegraphics{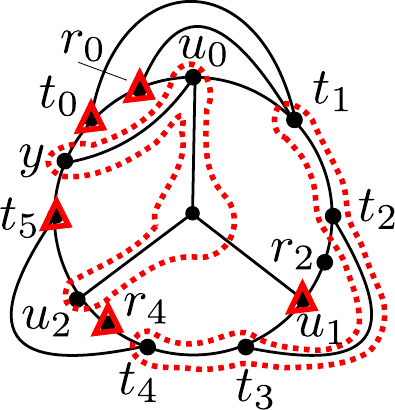}}
\captionof{figure}{\label{fig:R14}}
\end{minipage}%
\end{figure}

 Henceforth we assume there are jumps $t_0-t_1$ and $t_2-t_3$, but not
$t_4-t_5$.
 By Claim~\ref{clm:5}, at least one of $X_1$ and $X_2$ is empty. Without
loss of generality, assume $X_1= \emptyset$ and hence $t_1= u_0^+$.
 We claim that there are $K_{2,2}$ minors $M_1$ in $G[u_2,t_1]$ and
$M_2$ in $G[u_0, u_2]$, both rooted at $u_2$ and $[u_0, t_1]$.

\begin{figure}
\centering
\begin{minipage}[b]{.33\linewidth}
\centering\scalebox{.95}{\includegraphics{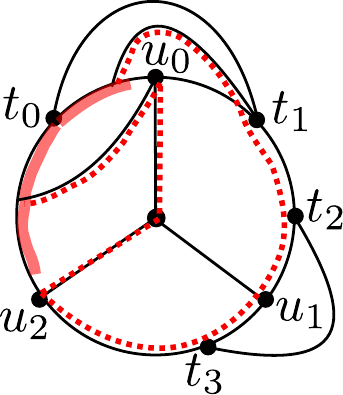}}
\captionof{figure}{\label{fig:W3}}
\end{minipage}%
\begin{minipage}[b]{.33\linewidth}
\centering\scalebox{.95}{\includegraphics{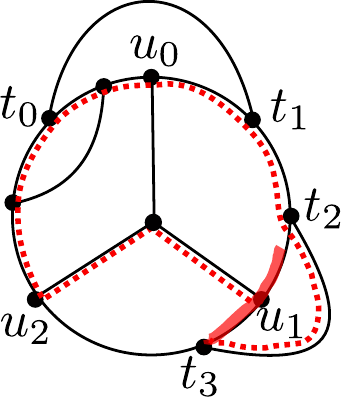}}
\captionof{figure}{\label{fig:W15}}
\end{minipage}%
\begin{minipage}[b]{.33\linewidth}
\centering\scalebox{.95}{\includegraphics{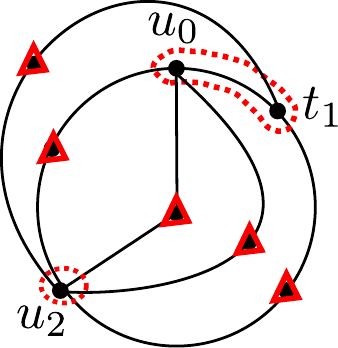}}
\captionof{figure}{\label{fig:W14}}
\end{minipage}%
\end{figure}

 Assume that $M_1$ does not exist.
 If $t_0=u_2^+$, then there is a standard longer cycle $L(u_2^+-u_0^+)$.
 Hence $t_0 \neq u_2^+$, and there must be a jump $r-r'$ from $(u_2,
t_0)$ to $(t_0,u_0]$.
 If $r' \ne u_0$ then we may take $M_1$ to be
 $\apm([u_2, r], (r',t_1]; r', t_0)$, so all jumps
from $(u_2, t_0)$ must go to $u_0$.
 If there is a $K_{2,2}$ minor along $[u_2,t_0]$ or along $[t_0, u_0]$
then we also have $M_1$, so neither of these minors exist.
 All jumps out of $(t_0,u_0)$ must go
to $t_1$ since jumps to $u_2$ are blocked by planarity. By
Corollary~\ref{cor:P1} applied to $G[u_2,t_0]$, there is a path
$P_1=t_0\ldots t$ such that $V(P_1)=(u_2,t_0]$ and $t$ is a degree $2$
vertex in $G[u_2,t_0]$, or $t=t_0$ if $(t_0,u_0)=\emptyset$, so that $t$
is adjacent to $u_0$. Similarly by Corollary~\ref{cor:P1} there is a
path $P_2=t_0\ldots s$ such that $V(P_2)= [t_0,u_0)$ and $s$ is a degree
$2$ vertex in $G[t_0,u_0]$ or $s=t_0$, so that $s$ is adjacent to $t_1$.
 Then there is a longer cycle
 $P_2 \cup st_1 \cup C[t_1, u_2] \cup u_2du_0t \cup P_1$%
 \whenfig{ as shown in Figure~\ref{fig:W3}}.
 This is a contradiction, so $M_1$ exists.

 Assume that $M_2$ does not exist.
 If there is an inside jump out of $(t_2,u_1)$ or $(u_1,t_3)$, or any
jump out of $(t_3,u_2)$, then this jump and $t_2-t_3$ give us $M_2$.
 So all edges of $G$ leaving $G[t_2, t_3]$ leave at $t_2$, $t_3$ or
$u_1$, and $(t_3,u_2) = \emptyset$.
 Any $K_{2,2}$ minor along $[t_2,t_3]$ would also provide $M_2$, so
there is no such minor.
 Therefore, by Lemma \ref{lem:P0} there is a Hamilton path $P = t_2 t_3
\ldots t$ in $G[t_2,t_3]$ with $t$ of degree $2$ in $G[t_2,t_3]$.  Then
$t=u_1$ and we have a longer cycle
 $C[u_2,t_2] \cup P \cup u_1du_2$
 \whenfig{as shown in Figure~\ref{fig:W15}}.
 This is a contradiction, so $M_2$ exists.

 Together $M_1$ and $M_2$ give a $K_{2,5}$ minor $M([u_0,t_1],
u_2)$%
 \whenfig{ as in Figure~\ref{fig:W14}}.
 This contradiction concludes the proof of Theorem~\ref{thm:main}.
 \end{proof}

 \section{Sharpness}

 A natural next step is to consider the same result for
$K_{2,6}$-minor-free graphs. It is not true, however, that all
$3$-connected planar $K_{2,6}$-minor-free graphs are Hamiltonian. In
fact, we can construct an infinite family of non-Hamiltonian
$3$-connected planar $K_{2,6}$-minor-free graphs.
 Let $G_k$ be the graph shown in Figure~\ref{fig:K2,6}, where $k \ge 1$.
 We begin by analyzing $K_{2,5}$ minors in $G_1$, which is the Herschel
graph, mentioned earlier.

 \begin{lemma}\label{lem:herschelk25}
 Suppose $G_1$ has a $K_{2,5}$ minor with standard model $\sem(R_1, R_2;
S)$.  Then

(a) $R_1 \cup R_2 \cup S = V(G_1)$,

(b) each of $R_1$ and $R_2$ contains exactly one degree $4$ vertex
of $G_1$, and

(c) $G_1$ has no edge between $R_1$ and $R_2$.

 \end{lemma}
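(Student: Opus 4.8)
The plan is to analyse a standard model $\sem(R_1,R_2;S)$ directly and squeeze it against a few structural facts about $G_1$, all read off Figure~\ref{fig:K2,6}: $|V(G_1)|=11$; $G_1$ is bipartite, and (forced already by the degree sequence) the class, call it $A$, containing all three degree-$4$ vertices has $|A|=5$ (three of degree $4$, two of degree $3$), while the other class $B$ has $|B|=6$ with all vertices of degree $3$; and --- the one fact special to $G_1$ --- no vertex of $G_1$ is adjacent to all three degree-$4$ vertices (equivalently, every vertex of $B$ is adjacent to exactly two of them), so in particular no two degree-$4$ vertices of $G_1$ have three common neighbours.

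First I would record the elementary constraints. In a standard model $|S|=5$, each $s_j$ has a neighbour in $R_1$ and one in $R_2$, while $R_1,R_2$ are connected and pairwise disjoint from each other and from $S$. Since $\Delta(G_1)=4$, a singleton branch set would contain a vertex of degree $\ge 5$, so $|R_1|,|R_2|\ge 2$ and the model uses at least $9$ of the $11$ vertices. As $R_i$ is connected with $\ge 2$ vertices it meets both $A$ and $B$; writing $S_A=S\cap A$ and $S_B=S\cap B$, each vertex of $S_A$ has a neighbour in $R_1\cap B$ and one in $R_2\cap B$, each vertex of $S_B$ one in $R_1\cap A$ and one in $R_2\cap A$, and counting vertices within each class gives $|S_A|+|R_1\cap A|+|R_2\cap A|\le 5$ and $|S_B|+|R_1\cap B|+|R_2\cap B|\le 6$.

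Next comes a case analysis on $(|S_A|,|S_B|)$, where we may assume $|S_B|\ge 3$ since $|S_A|+|S_B|=5$. The value $|S_B|=5$ is impossible, as $R_1\cap B$ and $R_2\cap B$ would be disjoint nonempty subsets of a one-element set. When $|S_B|=4$, the domination conditions together with the degree bounds (at most $4$ overall, exactly $3$ on $B$) and the connectivity of the $R_i$ force $|R_1|=|R_2|=3$, each $R_i$ inducing a path with middle vertex in $B$ and both end vertices in $A$; when $|S_B|=3$ they force $\{|R_1|,|R_2|\}=\{4,2\}$, or the residual possibility $|R_1|=|R_2|=2$. In every non-residual configuration a direct count shows that all $11$ vertices are used, so (a) holds there; moreover the $B$-vertices of each $R_i$ turn out to have their whole degree-$3$ neighbourhood forced to lie inside $R_i\cup S$, so no edge joins $R_1$ to $R_2$ and (c) holds; and any degree-$4$ vertex contained in $R_i$ must lie in $R_i\cap A$.

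It then remains to handle two situations, and both contradict the special fact. For (a), in the residual case one computes $R_i=\{v_i,w_i\}$ with $v_i\in A$ of degree $4$, $w_i\in B$, and $N(v_i)=S_B\cup\{w_i\}$, so $v_1$ and $v_2$ are two degree-$4$ vertices with the three common neighbours $S_B$ --- impossible. For (b), a branch set $R_i$ containing two degree-$4$ vertices would put both of them in $R_i\cap A$, and using the forced neighbourhoods one checks that this makes either those two vertices have three common neighbours or some vertex adjacent to all three degree-$4$ vertices, again impossible. I expect this final step, together with establishing the special fact itself, to be the only genuinely ad hoc part of the argument: the counting above narrows everything to a bounded list of candidate configurations, but separating $G_1$ from the other bipartite $11$-vertex graphs of the same degree sequence requires inspecting the actual graph.
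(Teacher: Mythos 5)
Your approach is genuinely different from the paper's: you exploit the bipartite structure of the Herschel graph $G_1$ (with part $A$ containing all three degree-$4$ vertices, $|A|=5$, $|B|=6$) and run a counting argument on the distribution of $S$ between $A$ and $B$, isolating a single ``special fact'' (no vertex of $G_1$ is adjacent to all three degree-$4$ vertices). The paper instead uses triangle-freeness to force $G_1[R_1]$ to be a path, classifies $R_1$ by the degree-type of that path, and checks cases against the adjacency data from the figure, using symmetry to reduce the number of cases. Your framework is arguably more conceptual, but as written it has a genuine gap.

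The problem is the reduction ``we may assume $|S_B|\ge 3$ since $|S_A|+|S_B|=5$.'' This is not a valid without-loss-of-generality step: $A$ and $B$ are not interchangeable (only $A$ contains degree-$4$ vertices), so you cannot swap them. From $|S_A|+|S_B|=5$ you only get that at least one of the two is $\ge 3$, and the case $|S_A|=3$, $|S_B|=2$ really does occur in $G_1$: take $R_1=\{u_1,x,u_2\}$, $R_2=\{u_6,z,u_7\}$, $S=\{u_3,u_4,u_5,y,v_1\}$, which is the paper's type $343$ with $w_1,w_3$ adjacent neighbours of $w_2$; here $S_A=\{u_4,u_5,y\}$, $S_B=\{u_3,v_1\}$. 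Your case analysis omits this configuration entirely. (You can rule out $|S_A|\ge 4$ easily, since $R_1\cap A$ and $R_2\cap A$ are disjoint nonempty subsets of $A\setminus S_A$, which then has at most one element.) The gap is recoverable by a parallel count: when $|S_A|=3$ one is forced into $|R_1\cap A|=|R_2\cap A|=1$ and $|R_1\cap B|=|R_2\cap B|=2$, all $11$ vertices get used, the unique $A$-vertex of each $R_i$ is forced to have degree $4$, and (c) follows as in your other cases --- but this case must actually be written out.

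A lesser point: for (b) you rule out a branch set containing two degree-$4$ vertices, but you never explicitly rule out zero. It does follow by pigeonhole, since there are only three degree-$4$ vertices altogether and you have independently shown (via the special fact) that $S_A$ can contain at most one of them when $|S_A|\ge 2$, and at most $|S_A|$ in general; still, the step needs to be stated rather than left implicit behind ``one checks.''
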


\begin{figure}
\centering
\scalebox{.7}{\includegraphics{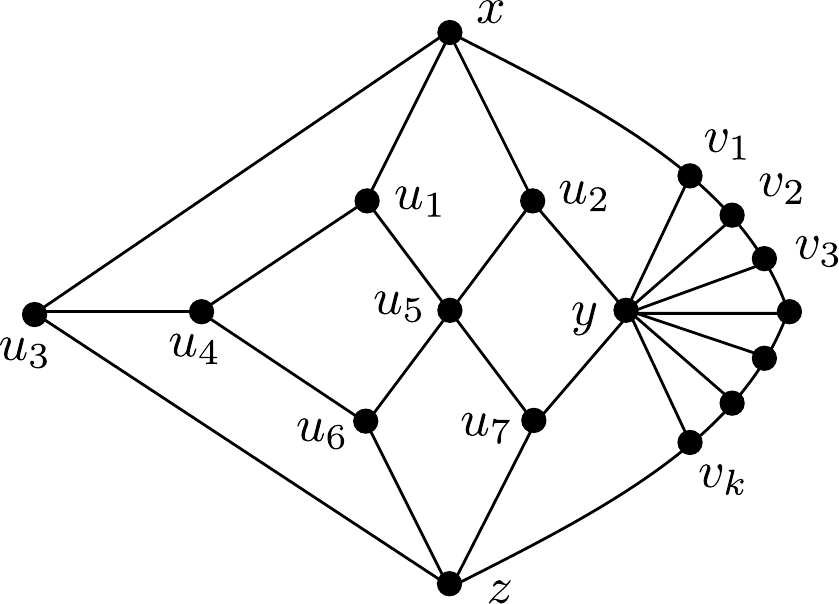}}
\caption{\label{fig:K2,6}}
\end{figure}

 \begin{proof}
 For $i = 1$ and $2$ let $H_i = G_1[R_i]$, and let $N(R_i)$ be the set of
neighbors of $R_i$ in $G_1$; then $S \subseteq N(R_i)$.
 We use the fact that $G_1$ is highly symmetric: besides the $2$-fold
symmetries generated by reflecting Figure \ref{fig:K2,6} about
horizontal and vertical axes, there is a $3$-fold symmetry generated by
the automorphism $(u_4)(u_1u_6u_3)(xu_5z)(u_2u_7v_1)(y)$.  Thus all
degree $4$ vertices in $G_1$ are similar, $u_4$ and $y$ are similar, and
all other degree $3$ vertices are similar.

 Assume without loss of generality that $|R_1| \le |R_2|$.
 Since all vertices of $G_1$ have degree $3$ or $4$, we have $|R_1| \ge
2$.
 Since $|R_1|+|R_2| \le 6$, we have $|R_1| \le 3$.
 Since $G_1$ has no triangles, $H_1$ is a path $w_1w_2$ or $w_1w_2w_3$. 
 Define the \textit{type} of a path $w_1 w_2 \ldots w_k$ to be the
sequence $d_1 d_2 \ldots d_k$ where $d_i = \deg(w_i)$.
 We break into cases according to the type of $H_1$.
 The possible types are restricted by the fact that no two degree $4$
vertices of $G_1$ are adjacent.  
 When $|R_1|=3$ we must also have $|R_2|=3$ so $H_2$ is a path
$x_1x_2x_3$, and $V(G_1)-R_1-S=R_2$ so that $V(G_1)-R_1-N(R_1) \subseteq
R_2$.

 If $H_1$ has type $33$ then $|N(R_1)| < 5$.
 If $H_1$ has type $333$ then by symmetry we may assume $H_1=u_1u_4u_6$,
and again $|N(R_1)| < 5$.  So neither of these cases happen.

 If $H_1$ has type $34$ (or $43$) then up to symmetry $H_1=u_3x$.
Then $S = N(R_1) = \set{u_1, u_2, u_4, v_1, z}$.
 Now $R_2$ contains $u_6$ (so that $u_4 \in N(R_2)$) and $y$ (so that
$v_1 \in N(R_2)$) so $H_2$ is the path $u_6u_5u_7y$ of type $3433$.

 If $H_1$ has type $334$ (or $433$) then up to symmetry $H_1=u_4u_1x$.
 Then $S=N(R_1)=\set{u_2, u_3, u_5, u_6, v_1}$ and $H_2$ is the path
$yu_7z$ of type $334$.

 If $H_1$ has type $343$ then $w_1$ and $w_3$ may be either opposite
or adjacent neighbors of $w_2$.
 If they are opposite neighbors, then up to symmetry
$H_1=u_3xu_2$.
 Then $V(G_1)-R_1-N(R_1) = \set{u_6,u_7} \subseteq R_2$ and so
either $H_2=u_6u_5u_7$ and $R_2$ is not adjacent to $v_1 \in S$, or
$H_2=u_6zu_7$ and $R_2$ is not adjacent to $u_1 \in S$.  So this does not occur.
 If $w_1$ and $w_3$ are adjacent neighbors of $w_2$, then up to symmetry
$H_1=u_1xu_2$.  Then $S=N(R_1)=\set{u_3,u_4,u_5,y,v_1}$ and $H_2$ is the
path $u_6zu_7$ of type 343.

 If $H_1$ has type $434$ then up to symmetry $H_1=xu_1u_5$.
 Then $V(G_1)-R_1-N(R_1) = \set{y,z} \subseteq R_2$ and so $H_2$ is
either $yu_7z$ or $yv_1z$.  But in either case $R_2$ is not adjacent to
$u_4 \in S$, so this case does not occur.

 Whenever the minor exists (types $34$, $334$, and $343$ with adjacent
neighbors) all of (a), (b) and (c) hold.
 \end{proof}

 \begin{proposition}
 For all $k \ge 1$, $G_k$ is a $3$-connected planar non-Hamiltonian
$K_{2,6}$-minor-free graph.
 \label{lem:K2,6}
 \end{proposition}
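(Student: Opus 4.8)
The plan is to establish the four required properties of $G_k$ — planarity, $3$-connectivity, non-Hamiltonicity, and $K_{2,6}$-minor-freeness — essentially independently, with only the last being substantial. Planarity is immediate from the plane drawing in Figure~\ref{fig:K2,6}. For $3$-connectivity, I would check that $G_1$ (the Herschel graph) is $3$-connected directly, and then argue that passing from $G_k$ to $G_{k+1}$ (which, from the figure, should amount to inserting another ``layer'' into the gadget) preserves $3$-connectivity; alternatively, one exhibits three internally disjoint paths between every pair of vertices by inspection of the figure, exploiting the symmetry noted in the proof of Lemma~\ref{lem:herschelk25}. Non-Hamiltonicity is where I would use a standard bipartiteness obstruction: if $G_k$ is bipartite with parts of unequal size, it has no Hamilton cycle. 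The Herschel graph is the classic example of this phenomenon ($7$ vertices on one side, $4$ on the other), and I would verify that each $G_k$ is bipartite with a color-class imbalance — reading off the two color classes from Figure~\ref{fig:K2,6} and checking that adding a layer keeps the imbalance (indeed likely increases it).

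The real work is showing $G_k$ has no $K_{2,6}$ minor, and here I would leverage Lemma~\ref{lem:herschelk25} as the base case. The idea is that $G_k$ is built from $G_1$ by repeated ``expansion'' at a controlled location, and I would argue that any standard model $\Sigma(R_1,R_2;S)$ of $K_{2,6}$ in $G_k$ would, after contracting each inserted layer back down, induce a $K_{2,5}$ minor in $G_1$ with $|R_1|+|R_2|+|S|=|V(G_1)|$ (part (a) of the lemma), each of $R_1,R_2$ containing exactly one degree-$4$ vertex (part (b)), and no edge between $R_1$ and $R_2$ (part (c)). The point of carrying along parts (b) and (c) is that they are exactly the rigidity one needs to rule out ``spending'' an extra branch vertex on the sixth leaf $b_6$: the five constraints $S\subseteq N(R_1)\cap N(R_2)$ already saturate the available neighbors of the two degree-$4$ branch sets, so there is no room for a sixth independent leaf, and condition (c) blocks the alternative of routing extra connections through an $R_1$--$R_2$ edge. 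I would make this precise by a short case analysis mirroring the one in Lemma~\ref{lem:herschelk25}, showing that in each surviving configuration ($H_1$ of type $34$, $334$, or $343$-with-adjacent-neighbors) there is no available vertex or contractible connected set to serve as $b_6$.

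The main obstacle, then, is the reduction from $G_k$ down to $G_1$: I need to show that contracting the inserted layers in a putative $K_{2,6}$ model of $G_k$ yields a genuine $K_{2,6}$ model (not merely $K_{2,5}$) of $G_1$, which would contradict even the weaker statement that $G_1$ has no $K_{2,6}$ minor — or, more carefully, that it yields a $K_{2,5}$ model violating the conclusions of Lemma~\ref{lem:herschelk25}. The subtlety is that a branch set in $G_k$ might lie entirely inside an inserted layer, or a connecting path might traverse a layer, so the contraction must be set up so that the layer-vertices are absorbed consistently into the adjacent branch sets without destroying disjointness or connectivity. I would handle this by choosing the gadget so that each inserted layer has a small vertex cut (a $2$- or $3$-cut) separating it from the rest, so that any branch set meeting both sides of the cut must contain the whole cut, and then a branch set confined to one layer can have at most two or three neighbors outside it — too few to be a degree-$4$ branch vertex and barely enough to be a leaf, which forces the model to look (after contraction) like one coming from $G_1$. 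Once that structural reduction is in place, parts (b) and (c) of Lemma~\ref{lem:herschelk25} close the argument: there is simply nowhere to put $b_6$.
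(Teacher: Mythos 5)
Your plan for planarity and $3$-connectivity is fine and close to the paper's (which uses a quick face-intersection criterion for $3$-connectivity, but exhibiting three paths would also work), and your broad strategy for $K_{2,6}$-minor-freeness --- contract the inserted layer back toward $G_1$, then invoke all three conclusions of Lemma~\ref{lem:herschelk25} to block a sixth leaf --- is in the right spirit. The paper actually runs a cleaner one-step induction rather than contracting everything at once: if some edge $e$ of the path $F=v_1\ldots v_k$ has an end in $T=V(G_k)-R_1-R_2-S$ or has both ends in the same $R_j$, contract $e$ to land in $G_{k-1}$ and apply the inductive hypothesis; otherwise the branch-set structure along $F$ is rigid enough that one of (a), (b), (c) of the lemma is violated. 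You would need to supply the case analysis you gesture at, but that part of the plan is salvageable.

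The real gap is non-Hamiltonicity. Your proposed bipartiteness obstruction does not apply: while $G_1$ (the Herschel graph) is bipartite --- with parts of sizes $5$ and $6$, not $7$ and $4$ as you state --- $G_k$ is \emph{not} bipartite for $k\ge 2$. The expansion replaces the single vertex $v_1$ by a path $v_1v_2\ldots v_k$ in which \emph{every} $v_i$ is adjacent to $y$ (and $v_1$ to $x$, $v_k$ to $z$); this is exactly what makes contracting an $F$-edge return $G_{k-1}$, and it is what makes each connected sub-interval of $F$ have only three neighbors outside $F$, a fact the paper's minor argument relies on. But then $v_1v_2y$ is a triangle (more generally $v_iv_{i+1}y$ is an odd cycle), so the graph has odd cycles and no $2$-coloring. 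The paper instead uses a toughness obstruction that survives the expansion: the fixed $5$-set $S=\{x,y,z,u_4,u_5\}$ leaves $G_k-S$ with six components for every $k$ (one of these components being the whole path $F$), so $G_k$ is not $1$-tough and hence not Hamiltonian. You should replace your bipartiteness step with this or an equivalent toughness/independent-set argument.
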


\begin{proof}
 In the plane embedding of $G_k$ shown in Figure~\ref{fig:K2,6} every pair
of faces intersect at most once (at a vertex or along an edge), so $G_k$
is $3$-connected.
 Let $S=\set{x, y, z, u_4, u_5}$.  Then $|S|=5$ but $G_k-S$ has six
components, so $G_k$ cannot be Hamiltonian ($G_k$ is not $1$-tough).

 We prove that $G_k$ is $K_{2,6}$-minor-free by induction on $k$.  For
$G_1$ this follows from Lemma \ref{lem:herschelk25}(a).  So suppose that
$k \ge 2$, all $G_j$ for $j \le k-1$ are $K_{2,6}$-minor-free, and
$G_k$ has a $K_{2,6}$ minor with standard model $\sem(R_1, R_2; S)$.

 Let $F = v_1v_2 \ldots v_k$.  Let $R_j'=R_j - V(F)$ for $j=1$ and $2$,
$S'=S-V(F)$, $S''=S \cap V(F)$ and $T=V(G_k)-R_1-R_2-S$.
 We cannot have $R_j \subseteq V(F)$ because any subset of $V(F)$ that
induces a connected subgraph in $G_k$ has only three neighbors in $G_k$.
 Therefore, each $R_j'$ is nonempty.
 If $v_i \in R_j \cup S$ for some $v_i \in V(F)$, then there is a path
$P_j(v_i)$ from $v_i$ to a vertex of $R_j'$, all of whose internal
vertices belong to $R_j \cap V(F)$.  The other end of $P_j(v_i)$ is one
of $x$, $y$ or $z$.

 We claim that ($\ast$) $V(F) \subseteq R_1 \cup R_2 \cup S$ and no two
consecutive vertices of $F$ belong to the same $R_j$.  If not, there is
$e \in E(F)$ with one end in $T$, or both ends in the same $R_j$. 
Contracting $e$ preserves the existence of a $K_{2,6}$ minor and gives a
graph isomorphic to $G_{k-1}$, contradicting our inductive hypothesis.

 Suppose $y \in S \cup T$.
 If some $v_a \in S$ then $P_j(v_a) = v_a v_{a-1} \ldots v_1 x$ and
$P_{3-j}(v_a) = v_a v_{a+1} \ldots z$ for $j=1$ or $2$.
 Thus $\sem(R_1', R_2'; S' \cup \set{v_1})$ is a $K_{2,6}$ minor in
$G_1$, a contradiction.
 Otherwise, by ($\ast$), $v_1 \in R_j$ and $v_2 \in R_{3-j}$ for some
$j$.
 We must have $P_j(v_1) = v_1 x$ and $P_{3-j}(v_2) = v_2v_3 \ldots v_k
z$.  Then $\sem(R_1', R_2'; S - \set{y} \cup \set{v_1})$ is a $K_{2,6}$
or $K_{2,7}$ minor in $G_1$, again a contradiction.

 So we may assume without loss of generality that $y \in R_2$.
 If $|S''| \ge 2$ we can choose $v_a, v_b \in S''$ with $a < b$ so that
there is no $v_i \in S''$ with $a < i < b$.
 Then $P_1(v_a) = v_a v_{a-1} \ldots v_1 x$ and $P_1(v_b) = v_b v_{b+1}
\ldots v_k z$, so $S'' = \set{v_a, v_b}$ and $x, z \in R_1'$.  Then
$\sem(R_1', R_2'; S' \cup \set{v_1})$ is a $K_{2,5}$ minor in $G_1$ that
contradicts Lemma \ref{lem:herschelk25}(b).
 If $|S''| \le 1$ then there is either $v_a \in S$, or since $k \ge 2$
by ($\ast$) there is $v_a \in R_1$.
 Without loss of generality $P_1(v_a)= v_a v_{a-1} \ldots v_1 x$.  Now
$\sem(R_1', R_2' \cup \set{v_1}; S')$ is a $K_{2,5}$ or $K_{2,6}$ minor
in $G_1$ with $x \in R_1'$ and $v_1 \in R_2' \cup \set{v_1}$,
contradicting Lemma \ref{lem:herschelk25}(c).
 \end{proof}

 Based on computer results of Gordon Royle (personal communication), we
suspect that it may be possible to characterize all exceptions to the
statement that all $3$-connected planar $K_{2,6}$-minor-free graphs are
Hamiltonian. All known exceptions are closely related to the family
shown in Figure~\ref{fig:K2,6}.

 \section*{Acknowledgements}

 The first author thanks Zachary Gaslowitz and Kelly O'Connell for
helpful discussions.

 \end{document}